\crefname{hypothesis}{Hypothesis}{Hypotheses}
\title{Integral Operator Approaches for Scattered Data Fitting   on Spheres \thanks{The corresponding author is Shao-Bo Lin. 
\funding{S. B. Lin  was partially
	supported by the  National Key R\&D Program of China (No.2020YFA0713900) and the
    Natural Science Foundation of China [Grant No  62276209].}}}
\author{Shao-Bo Lin\thanks{Center for Intelligent Decision-Making and Machine Learning, School of Management, Xi'an Jiaotong University, Xi'an 710049, China
  (\email{sblin1983@gmail.com}).}
  }
\DeclareMathOperator{\diag}{diag}
\newcommand{\sfgrad}[1][]{ 
	\nabla_{*}
}
\newcommand{\sfcurl}[1][]{ 
	\mathbf{L}
}
\newcommand{\Jw}[1][\alpha,\beta]{ 
w_{#1}
}
\newcommand{\imat}[1][d]{ 
    I
}
\newcommand{\Lpw}[2][\Jw]{ 
\mathbb{L}_{#2}(#1)
}
\newcommand{\InnerLGb}[2][{\Jw[r-\frac{1}{2},r-\frac{1}{2}]}]{ 
\left(#2\right)_{\Lpw[{#1}]{2}}
}
\newcommand{\Diff}[2][t]{ 
\ifthenelse{\equal{#2}{1}}{\frac{\mathrm{d}}{\mathrm{d}#1}}{
\left(\frac{\mathrm{d}}{\mathrm{d}#1}\right)^{#2}}
}
\begin{document}

\maketitle
\begin{abstract} This paper focuses on scattered data fitting problems on spheres. We study the approximation performance of a class of weighted spectral filter algorithms (WSFA), including Tikhonov regularization, Landweber iteration, spectral cut-off, and iterated Tikhonov, in fitting noisy data with possibly unbounded random noise. For theoretical analysis, we develop an  integral operator approach  that can be regarded as an extension of the widely used sampling inequality approach and norming set method in the community of scattered data fitting. After providing an equivalence between the operator differences and quadrature rules, we succeed in deriving tight bounds for operator differences,  explicit operator representations for WSFA  and consequently optimal    error estimates. Our derived  error estimates do not suffer from the saturation phenomenon for Tikhonov regularization, native-space-barrier for existing error analysis and adapts to different embedding spaces. Based on the operator representations, we 
develop a Lepskii-type principle to determine the filter parameter of WSFA and a divide-and-conquer scheme to to reduce the computational burden and provide optimal approximation rates for corresponding algorithms.

\end{abstract}

\begin{keywords}
Scattered data fitting,  Integral operator, Spherical data, Spectral filter algorithm
\end{keywords}

\begin{AMS}
  68T05, 94A20, 41A35 
\end{AMS}


\pagestyle{myheadings}
\thispagestyle{plain}

\section{Introduction}

Scattered data fitting on spheres, also known as spherical fitting or spherical regression, abounds in numerous applications such as 3D point cloud registration \cite{tam2012registration}, image rendering \cite{tsai2006all}, medical  imaging  \cite{sbibih2007new}, geophysics \cite{king2012lower}, planetary science \cite{wieczorek1998potential},   and signal recovery \cite{mcewen2011novel}. For example,  point clouds obtained from different sensors or scans may contain noise or outliers due to measurement errors.  Fitting scattered data  in the point cloud is critical to align and register multiple point clouds and enable accurate 3D reconstruction, object recognition, and augmented reality applications. Developing  powerful  strategies for spherical fitting 
not only helps practitioners to efficiently process data  but also deepens their  understanding on data  and consequently provides feasible guidance to  develop efficient sampling, storage, and processing techniques.

Given the set of scattered data $\Lambda=\{x_i\}_{i=1}^{|\Lambda|}\subset\mathbb S^d$ with $\mathbb S^d$ the  unit sphere embedded into the $d+1$-Euclidean space $\mathbb R^{d+1}$, 
we are concerned with noisy data fitting model  
\begin{equation}\label{Model1:fixed}
        y_{i}=f^*(x_{i})+ \varepsilon_{i},  \qquad\forall\
        i=1,\dots,|\Lambda|
\end{equation}
for sampling noise $\{\varepsilon_{i}\}_{i=1}^{|\Lambda|}\subset\mathbb R^{|\Lambda|}$  and target function $f^*:\mathbb S^d\rightarrow\mathbb R$. Due the existence of outliers, it is reasonable to assume
the noise to be  mean-zero random that can be unbounded. 
Writing $D=\{(x_i,y_i)\}_{i=1}^{|\Lambda|}$, our purpose is to derive an estimate $f_D$ based on $D$ such that $f_D$ is a good approximation of $f^*$. 

Kernel (minimal norm) interpolation (KI) \cite{hubbert2015spherical} is a classical and long-standing approach for spherical fitting.  For a positive definite function $\phi$,    KI is mathematically defined by
\begin{equation}\label{minimal-norm-interpolation}
   f_{D}:=  \arg\min\limits_{f\in\mathcal N_\phi}\|f\|_\phi,\qquad
    \mbox{s.t.} \quad 
     f(x_i)=y_{i},\quad i=1,\dots,|\Lambda|, 
\end{equation}
where $\mathcal N_\phi$ is the native space (also called as the reproducing kernel Hilbert space in the machine learning community \cite{cucker2007learning}) associated with $\phi$. The approximation performance of KI  for noise-free data, i.e., $\varepsilon_i=0$ in \eqref{Model1:fixed},   has been extensively studied in the literature \cite{narcowich2002scattered,levesley2005approximation,narcowich2007direct}, in which optimal approximation rates    were established under different metrics  even when $f^*$ is not in $\mathcal N_\phi$. These exciting results demonstrate the excellent  power of KI in fitting noise-free data. The problem is, however, that the powerful fitting capability inevitably enables KI to absorb the noise in the derived estimator when  data are contaminated, leading to the well known overfitting phenomenon in the sense of   deriving estimator fits the given data well but fails to accurately predict  new queries. Though the stability of KI,  measured by   condition numbers of   kernel matrices \cite{narcowich1998stability}, Lebesgue constants \cite{hangelbroek2010kernel} or $L^\infty$ norms of the $L^2$ projection operator \cite{hangelbroek2011kernel}, was well studied,  showing  that KI does not amplify the negative effects of  noise in the interpolation process, there lacks explicit approximation error estimates and controllable upper bounds for $\|f\|_\phi$ in tackling noisy data.
Furthermore, the
 inconsistency of KI  \cite{buchholz2022kernel,lin2023dis} shows that   the approximation error of  KI  is even not always decreasing with respect to the size of data.
 
This phenomenon was  noticed by \cite{hesse2017radial,Feng2021radial}, where (weighted) Tikhonov regularization   was employed to guarantee small (native space) norm of the derived estimator. 
The philosophy behind (weighted) Tikhonov regularization is to introduce a  regularization term to contral $\|f\|_\phi$. Mathematically, it is defined by
\begin{equation}\label{KRR}
    f^{Tik}_{D,\lambda,W}:=\arg\min_{f\in\mathcal N_\phi}\sum_{i=1}^{|\Lambda|}w_i(f(x_i)-y_i)^2+\lambda\|f\|_K^2 
\end{equation}
for $w_i>0$, $i=1,\dots,|\Lambda|$. Approximation rates of (weighted) Tikhonov regularization have been established in \cite{hesse2017radial,Feng2021radial}, exhibiting  excellent theoretical behaviors of (weighted) Tikhonov regularization. However,  results in \cite{hesse2017radial} are only available to extremely small noise but those in \cite{Feng2021radial} are built upon bounded mean-zero noise. Furthermore  approximation results in \cite{hesse2017radial,Feng2021radial} suffer  from the well known saturation phenomenon 
  \cite{gerfo2008spectral} in the sense that the derived approximation rates saturated for a certain smoothness of target functions 
  and the native-space-barrier (NSB) problem in terms that the approximation rates are derived under the assumption $f^*\in\mathcal N_\phi$, which is much worse than those for KI \cite{levesley2005approximation,narcowich2007direct}. The main reasons are two folds. On   one hand, (weighted) Tikhonov regularization is difficult to encode the smoothness of target functions up to a certain level due to its special spectral property  \cite{gerfo2008spectral}, resulting in its limited performance in  approximating over-smooth functions. On the other hand, there  lacks  suitable analysis tools for (weighted) Tikhonov regularization to tackle noisy data,  though several powerful methods such as the norming set approach \cite{jetter1999error},  Zero Lemma approach \cite{hangelbroek2012polyharmonic},   interpolation with best approximation approach \cite{narcowich2002scattered} and sampling inequality approach \cite{hesse2017radial}
  have been developed for KI to handle noise-free data. 

Motivated by \cite{gerfo2008spectral},   weighted spectral filter algorithms (WSFA)  including iterated Tikhonov, spectral cut-off and Landweber iteration have been proposed to  conquer the saturation phenomenon of Tikhonov regularization in our recent work \cite{liu2024weighted}. Moreover, the classical integral operator approaches \cite{smale2007learning,lin2017distributed} in machine learning were borrowed to analyze the  approximation performance of WSFA in \cite{liu2024weighted}. However, such an integral operator approach,  is  only feasible under  the well-specified setting: $f^*\in\mathcal N_\phi$.
 In this paper, we modify the integral operator approach in \cite{liu2024weighted} by building an equivalence between operator differences and spherical quadrature rules for products of functions to derive 
 approximation errors without saturation, adapting to different metrics, and circumventing NSB.
The main novelties in analysis are   quadrature rules for products of functions, and   consequently  tight bounds for numerous operator differences and a delicate operator representation to enable the analysis to accommodate the mis-specified case $f^*\notin\mathcal N_\phi$.

Our contributions are four aspects. At first,  we propose a novel integral operator approach including  delicate  operator representations for WSFA and exclusive operator differences to   circumvent NSB. In particular, we show that the proposed integral operator approach can be regarded as an essential extension of the sampling inequality method \cite{hesse2017radial}. Then, based on the derived operator representations, we  derive optimal approximation rates for  WSFA without saturation and NSB under different metrics. Thirdly, based on the developed operator representations and operator differences, we propose a Lepskii-type principle \cite{blanchard2019lepskii} to select the filter parameter and theoretically verify its optimality in terms of deriving optimal approximation error of corresponding WSFA. 
Finally,   we propose a   distributed approximation scheme based on divide-and-conquer to reduce the computational burden of WSFA and provide its optimal approximation rate. Our analysis  shows that such a divide-and-conquer  scheme does not degrade the approximation performance of WSFA under mild conditions. 
All these show the power of the integral operator approach in tackling noisy data on spheres.

The rest of paper is organized as follows. In the next section, we introduce spherical basis functions,   integral operators and some existing results for KI. In Section \ref{Sec:integral-operator}, we develop a novel analysis tool for noisy data fitting based on operator differences and quadrature rules. In Section \ref{Sec:integral-operator}, we formulate the equivalence between operator differences and quadrature rules for products of functions and study the relation between operator differences and  sampling mechanisms.
  In Section  \ref{sec.spectral}, we derive approximation error of WSFA and compare it with existing results. In Section \ref{Sec:parameter}, we develop a novel Lepskii-type principle to select the filter parameter of WSFA. 
  In Section \ref{sec:Dis}, we propose the distributed approximation scheme and derive its optimal approximation rates. In the last section, we present 
proofs of our  theoretical results.

\section{Spherical Basis Function   and Kernel Interpolation}\label{sec.SBF}
In this section, we introduce several basic concepts and properties of spherical harmonics, spherical basis functions, integral operators  and some existing results for KI.

\subsection{Spherical basis function and integral operators}
Let $L^2(\mathbb S^d)$ be the Hilbert space endowed with inner product
\[
\langle f, g\rangle_{L^2} = \int_{\mathbb S^d} f(x) g(x) d \omega(x), \quad f,g \in L^2(\mathbb S^d),
\]
where $d\omega$ denotes the scaled Lebesgue measure on $\mathbb S^d$, i.e., $\int_{\mathbb S^d}d\omega=1$. For $k\in\mathbb N$, denote by  $\mathbb{H}^{d}_k$  the subspace of $L^2(\mathbb S^d)$ consisting of all the   spherical harmonics of degree $k$, whose dimension \cite{muller2006spherical} is
\begin{equation}\label{dimension}
 Z(d,k)=\left\{\begin{array}{ll}
  \frac{2k+d-1}{k+d-1}{{k+d-1}\choose{k}}\stackrel{d}{\sim} k^{d-1}, & \mbox{if}\ k\geq 1, \\
1, & \mbox{if}\ k=0.
\end{array}
\right. 
\end{equation}
Throughout the paper, $a\stackrel{\nu}{\sim} b$ for $a,b,\nu>0$ denotes that there is a constant $C_\nu$ depending only on $\nu$ such that $C_\nu^{-1}b\leq a\leq C_\nu b$.
For a nonnegative integer $s$, denote by  $\mathcal P_s^{d}$ the set of  all   algebraic polynomials of degree at most $s$ defined on $\mathbb S^d$. Then we have 
$
   \mathcal P_s^{d}=\bigoplus_{k=0}^s\mathbb{H}^{d}_k$ and consequently 
$$
   \mbox{dim}\mathcal P_s^d = 
   \sum_{k=0}^sZ(d,k)=Z(d+1,s) \stackrel{d}{\sim}  s^d.
$$

A univariate  function $\phi$ supported on $[-1,1]$ is called a spherical basis function (SBF) \cite{narcowich2007direct}, if
its Gegenbauer series expansion
$
           \phi(u)=\sum_{k=0}^\infty
            \hat{\phi}_k\frac{Z(d,k)}{\Omega_d} P_k^{d+1}(u)
$ has all non-negative Gegenbauer-Fourier coefficients, i.e., 
$\hat{\phi}_k\geq 0$ for all $k=0,1,\ldots, $ where
  $P_k^{d+1}$ is the Gegenbauer polynomial of order $\frac{d-1}{2}$ and degree $k$ normalized so that $P_k^{d+1}(1)=1$ and $\Omega_d=\frac{2\pi^{\frac{d+1}{2}}}{\Gamma(\frac{d+1}{2})}$ denotes the volume of $\mathbb S^d$. If in addition $\hat{\phi}_k>0$ for all $k=0,1,\ldots,$ and  $\sum_{k=0}^\infty\hat{\phi}_k {Z(d,k)} <\infty$,   $\phi$ is  said to be a (strictly) positive definite function. 
Each SBF $\phi$ corresponds to a native space \cite{narcowich2002scattered}, defined by 
$$
               \mathcal N_\phi:=\left\{f(x)=\sum_{k=0}^\infty\sum_{\ell=1}^{Z(d,k)}\hat{f}_{k,\ell}Y_{k,\ell}(x): \|f\|_\phi^2:=
               \sum_{k=0}^\infty
         \hat{\phi}_k^{-1}\sum_{\ell=1}^{Z(d,k)}\hat{f}_{k,\ell}^2<\infty\right\}, 
$$  
endowed with the inner product $\langle f,g\rangle_{\phi}=\sum_{k=0}^\infty\hat{\phi}_k^{-1}\sum_{\ell=1}^{Z(d,k)}\hat{f}_{k,\ell}\hat{g}_{k,\ell}$, where $
                 \hat{f}_{k,\ell}:=\int_{\mathbb
                 S^d}f(x)Y_{k,\ell}(x)d\omega(x) 
$ denotes the Fourier coefficient for $f\in L^2(\mathbb S^d)$,
and  $\{Y_{k,\ell}\}_{\ell=1}^{Z(d,k)}$ is an orthonormal basis of $\mathbb H_k^d$ under the metric of $L^2(\mathbb S^d)$. It the follows from the definition of $\langle f,g\rangle_{\phi}$ that $\{\hat{\phi}_k^{-1/2}Y_{k,\ell}\}_{k=0,\ell=1}^{\infty,Z(d,k)}$ builds an arbitrary orthonormal basis for $\mathcal N_\phi$.
The well known addition formula \cite{muller2006spherical} describes the relation between    $P_k^{d+1}$ and $Y_{k,j}$ in terms that 
\begin{equation}\label{addition-formula}
             \sum_{\ell=1}^{Z(d,k)}Y_{k,\ell}(x)Y_{k,\ell}(x')=\frac{Z(d,k)}{\Omega_{d}}P_k^{d+1}(x\cdot x'), \qquad x, x' \in \mathbb S^d.
\end{equation}
Therefore, we get
\begin{equation}\label{kernel-basis-relation}
    \phi(x\cdot x')=\sum_{k=0}^\infty
            \hat{\phi}_k\sum_{\ell=1}^{Z(d,k)}Y_{k,\ell}(x)Y_{k,\ell}(x') 
\end{equation}
and 
\begin{equation}\label{Reproducing-property}
    \langle f,\phi_{x}\rangle_\phi=f(x),\qquad \forall f\in\mathcal N_\phi, 
\end{equation}
where $\phi_x(x')=\phi(x\cdot x')$.

Let 
 $\psi(\cdot)=\sum_{k=0}^\infty \hat{\psi}_k\frac{Z(d,k)}{\Omega_d} P_k^{d+1}(\cdot)$ and $\varphi(\cdot)=\sum_{k=0}^\infty
            \hat{\varphi}_k\frac{Z(d,k)}{\Omega_d} P_k^{d+1}(\cdot)$  
be another two SBFs satisfying 
\begin{equation}\label{kernel-relation}
     \hat{\psi}_k=\hat{\phi}_k^{\beta}, \hat{\varphi}_k=\hat{\phi}_k^{\alpha}, \qquad  \alpha\geq \beta,    0\leq \beta\leq 1.
\end{equation}
Write $J_{\phi,\psi}:\mathcal N_\phi\rightarrow \mathcal N_\psi$ as the canonical inclusion. The   Funk-Hecke formula \cite{muller2006spherical}
\begin{equation}\label{funkhecke}
                   \int_{\mathbb{S}^{d}}\phi(x\cdot x')Y_{k,j}(x')d\omega(x')=\hat{\phi}_kY_{k,j}(x),\quad\forall\ j=1,\dots,Z(d,k),k=0,1,\dots
\end{equation}
shows that for any $g\in\mathcal N_\psi$, $k=0,\dots,\infty$ and $\ell=1,\dots,Z(d,k)$, there holds
\begin{eqnarray}\label{relation11111}
      \int_{\mathbb S^d}\langle g,\phi_x\rangle_\psi Y_{k,\ell}(x)d\omega(x)
      &=&\sum_{k'=0}^\infty\sum_{\ell'=1}^{Z(d,k')} \int_{\mathbb S^d}\hat{\psi}_{k'}^{-1} \hat{g}_{k',\ell'}\int_{\mathbb S^d}\phi(x'\cdot x)Y_{k',j'}d\omega(x') Y_{k,\ell}(x)d\omega(x)\nonumber\\
      &=&
      \sum_{k'=0}^\infty\sum_{\ell'=1}^{Z(d,k')} \int_{\mathbb S^d}\hat{\psi}_{k'}^{-1} \hat{g}_{k',\ell'}\hat{\phi}_{k'}Y_{k',j'}(x) Y_{k,\ell}(x)d\omega(x)=\frac{\hat{\phi}_k}{\hat{\psi}_k} \hat{g}_{k,\ell}.
\end{eqnarray}
 Therefore, 
for any $f\in\mathcal N_\phi$, $g\in\mathcal N_\psi$,
 we have
\begin{eqnarray*} 
      \langle  J_{\phi,\psi}f,g\rangle_\psi
   &=&\sum_{k=0}^\infty \hat{\psi}_k^{-1}
\sum_{\ell=1}^{Z(d,k)}\hat{f}_{k,\ell}\hat{g}_{k,\ell}
 =
 \sum_{k=0}^\infty \hat{\phi}_k^{-1}\sum_{\ell=1}^{Z(d,k)}\hat{f}_{k,\ell}\frac{\hat{\phi}_k}{\hat{\psi}_k} \hat{g}_{k,\ell}\nonumber\\
 &=&
 \sum_{k=0}^\infty \hat{\phi}_k^{-1}\sum_{\ell=1}^{Z(d,k)}\hat{f}_{k,\ell}\widehat{(\langle g,\phi_x\rangle_\psi)}_{k,\ell}
 =\langle f,\langle g,\phi_x\rangle_\psi\rangle_\phi,
\end{eqnarray*}
showing that 
\begin{equation}\label{population-conuga}
    J_{\phi,\psi}^Tf(x)=\langle f,\phi_x\rangle_\psi,\qquad f\in\mathcal N_\psi.
\end{equation}
 Define further 
$\mathcal L_{\phi,\psi}=J_{\phi,\psi} J_{\phi,\psi}^T$ and $ L_{\phi,\psi}=J_{\phi,\psi}^TJ_{\phi,\psi}$. Then, $\mathcal L_{\phi,\psi}$ is an integral operator from $\mathcal N_\psi \rightarrow \mathcal N_\psi $ and $  L_{\phi,\psi}$ is an integral operator from $\mathcal N_\phi\rightarrow\mathcal N_\phi$.  In particular, $\mathcal L_{\phi,\phi}$ and $ L_{\phi,\phi}$ are the identity operator. 
If $\mathcal N_\psi=L^2(\mathbb S^d)$, we write $ J_{\phi,\psi}=:J_\phi$, $J_{\phi,\psi}^T=:J^T_\phi$,  $ \mathcal L_{\phi}=:\mathcal L_{\phi,\psi}$ and $L_{\phi}=:L_{\phi,\varphi}$ for the sake of brevity. In particular, $L_\phi$ denotes the integral operator 
$$
    L_\phi f(x)=\int_{\mathbb S^d}f(x')\phi(x\cdot x')d\omega(x').
$$


The following lemma  quantifies the relation between the smoothness of a function and the operator  $\mathcal L_{\phi,\psi}$.

\begin{lemma}\label{Lemma:source-condition}
If $\phi,\varphi,\psi$ satisfy \eqref{kernel-relation} with $\alpha\geq\beta$ and $0\leq\beta\leq 1$, then  
\begin{equation}\label{operator-relation-112}
    \mathcal L_{\phi,\psi}g=\mathcal L_{\phi}^{1-\beta}g,\qquad
    L_{\phi,\psi}f=L_\phi^{1-\beta}f,\qquad\forall g\in\mathcal N_\psi,f\in\mathcal N_\phi,
\end{equation}
and
\begin{equation}\label{norm-relation-123}
  \|f\|_\psi=\|L_\phi^\frac{1-\beta}2f\|_\phi=\|L_{\phi,\psi}^{1/2}f\|_\phi,\qquad\forall f\in\mathcal N_\phi. 
\end{equation}
Furthermore, for any $f\in\mathcal N_\varphi$,  there exists  $h\in L^2(\mathbb S^d)$ and $h'\in\mathcal N_\psi$ such that 
\begin{equation}\label{source-condition}
    f=\mathcal L_\phi^{\frac{\alpha-\beta}{2}}h\mathop{=}\limits^{\beta<1}  \mathcal L_{\phi,\psi}^{\frac{\alpha-\beta}{2(1-\beta)}} h,\qquad  \mbox{and} \quad 
    \|  f\|_\varphi=\|h\|_{L^2}=\| h'\|_\psi,
\end{equation}
where  $\mathop{=}\limits^{\beta<1}$ denotes the equality holds only for $\beta<1$,  and
$\eta(\mathcal L_{\phi})$ for any  $\eta:\mathbb R_+\rightarrow \mathbb R_+$  is defined by spectral calculus. 
\end{lemma}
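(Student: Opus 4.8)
The plan is to diagonalise every operator in the statement against the single spherical-harmonic system $\{Y_{k,\ell}\}_{k\ge 0,\,1\le\ell\le Z(d,k)}$, which is a common orthogonal system for $L^{2}(\mathbb{S}^{d})$, $\mathcal N_\phi$, $\mathcal N_\psi$ and $\mathcal N_\varphi$ (orthonormal after the respective rescalings by powers of $\hat\phi_k$). The Funk--Hecke formula \eqref{funkhecke} shows that $L_\phi$ and $\mathcal L_\phi$ act as the Fourier multiplier $\hat f_{k,\ell}\mapsto\hat\phi_k\hat f_{k,\ell}$, and the identity \eqref{relation11111}, specialised to $\hat\psi_k=\hat\phi_k^{\beta}$, shows that $J_{\phi,\psi}^{T}$ acts as $\hat f_{k,\ell}\mapsto\hat\phi_k^{1-\beta}\hat f_{k,\ell}$ while $J_{\phi,\psi}$, being the canonical inclusion, leaves Fourier coefficients unchanged. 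Hence $\mathcal L_{\phi,\psi}=J_{\phi,\psi}J_{\phi,\psi}^{T}$ and $L_{\phi,\psi}=J_{\phi,\psi}^{T}J_{\phi,\psi}$ are both the multiplier $\hat\phi_k^{1-\beta}$. All operators in play are diagonal, self-adjoint and positive with the eigenpairs just exhibited, so each fractional power appearing in the statement is the functional calculus of a diagonal operator, i.e.\ the multiplier obtained by raising the eigenvalues to the relevant exponent; this is the meaning of $\eta(\mathcal L_\phi)$.

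With this dictionary, \eqref{operator-relation-112} is immediate: $\mathcal L_\phi^{1-\beta}$ is by construction the multiplier $\hat\phi_k\mapsto\hat\phi_k^{1-\beta}$, which is exactly the multiplier of $\mathcal L_{\phi,\psi}$ and of $L_{\phi,\psi}$, so applied to $g\in\mathcal N_\psi$ (resp.\ $f\in\mathcal N_\phi$) both sides return the function with Fourier coefficients $\hat\phi_k^{1-\beta}\hat g_{k,\ell}$. For \eqref{norm-relation-123} I would argue operator-theoretically rather than by series: for $f\in\mathcal N_\phi$, using that $J_{\phi,\psi}$ is the inclusion and \eqref{population-conuga},
\[
\|f\|_\psi^{2}=\langle J_{\phi,\psi}f,J_{\phi,\psi}f\rangle_\psi=\langle f,J_{\phi,\psi}^{T}J_{\phi,\psi}f\rangle_\phi=\langle f,L_{\phi,\psi}f\rangle_\phi=\bigl\|L_{\phi,\psi}^{1/2}f\bigr\|_\phi^{2},
\]
the last equality because $L_{\phi,\psi}$ is self-adjoint and positive on $\mathcal N_\phi$; then \eqref{operator-relation-112} gives $L_{\phi,\psi}^{1/2}=L_\phi^{(1-\beta)/2}$, which is the remaining identity.

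For the source condition \eqref{source-condition}, fix $f=\sum_{k,\ell}\hat f_{k,\ell}Y_{k,\ell}\in\mathcal N_\varphi$, so $\sum_{k,\ell}\hat\phi_k^{-\alpha}\hat f_{k,\ell}^{2}=\|f\|_\varphi^{2}<\infty$; on modes with $\hat\phi_k=0$ one necessarily has $\hat f_{k,\ell}=0$, and such modes lie in none of the native spaces in play, so they may be ignored. I would then define the representers $h$ and $h'$ simply by dividing the Fourier coefficients of $f$ by the power of $\hat\phi_k$ dictated by the operator in each of the two representations. That $f\in\mathcal N_\varphi$, together with Parseval, yields both the membership ($h\in L^{2}(\mathbb{S}^{d})$, $h'\in\mathcal N_\psi$) and the corresponding norm identities, while \eqref{operator-relation-112} identifies $\mathcal L_{\phi,\psi}$ with a power of $\mathcal L_\phi$, so that $\mathcal L_\phi^{(\alpha-\beta)/2}h$ and $\mathcal L_{\phi,\psi}^{(\alpha-\beta)/(2(1-\beta))}h'$ indeed reproduce $f$. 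The restriction $\beta<1$ enters precisely here: for $\beta=1$ one has $\mathcal N_\psi=L^{2}(\mathbb{S}^{d})$, $\mathcal L_{\phi,\psi}$ becomes the multiplier $\hat\phi_k^{0}\equiv 1$, and the exponent in the $\mathcal L_{\phi,\psi}$-representation degenerates.

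The only genuinely delicate point is the bookkeeping of the three Hilbert spaces: $\mathcal L_\phi$ lives on $L^{2}(\mathbb{S}^{d})$ whereas $\mathcal L_{\phi,\psi}$ and $L_{\phi,\psi}$ live on $\mathcal N_\psi$ and $\mathcal N_\phi$, so the displayed identities must be read as equalities of functions (equivalently, of Fourier-coefficient sequences), and one must check that every fractional power is applied to an element of the space on which it is defined (for the $\mathcal L_\phi^{-(\alpha-\beta)/2}$-type constructions this is exactly where $f\in\mathcal N_\varphi$ is used). Beyond this there is no functional-analytic subtlety past the elementary spectral calculus for diagonal self-adjoint positive operators, and everything reduces to matching exponents of $\hat\phi_k$.
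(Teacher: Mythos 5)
Your proof is correct and follows essentially the same route as the paper's: both diagonalize every operator against the spherical-harmonic system via the Funk--Hecke formula and \eqref{relation11111}, identify $\mathcal L_{\phi,\psi}$ and $L_{\phi,\psi}$ with the Fourier multiplier $\hat{\phi}_k^{1-\beta}$, and reduce all three claims to matching powers of $\hat{\phi}_k$; your adjoint-based derivation of \eqref{norm-relation-123} is only a cosmetic variant of the paper's direct series computation. One shared wrinkle: exactly as in the paper's own proof, the identity $\|h\|_{L^2}=\|f\|_\varphi$ for the representer in $f=\mathcal L_\phi^{(\alpha-\beta)/2}h$ is only verified for $\beta=0$, since Parseval gives $\|h\|_{L^2}^2=\sum_{k,\ell}\hat{\phi}_k^{\beta-\alpha}\hat{f}_{k,\ell}^2$ rather than $\sum_{k,\ell}\hat{\phi}_k^{-\alpha}\hat{f}_{k,\ell}^2$, so your appeal to "Parseval yields the norm identities" inherits the same gap in the statement that the paper leaves unresolved.
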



For $w_i>0, i=1,\dots,|\Lambda|$,
define 
\begin{equation}\label{Sampling-operator}
   S_{\Lambda,W}f=(\sqrt{w_{1}}f(x_1),\dots,\sqrt{w_{|\Lambda|}}f(x_{|\Lambda|}))^T 
\end{equation}
 as the weighted sampling operator from $\mathcal N_\phi$ to $\mathbb R^{|\Lambda|}$.
Since for any $f\in\mathcal N_\phi$ and ${\bf c}\in\mathbb R^{|\Lambda|}$  there holds 
$$
  \langle S_{\Lambda,W}f,{\bf c}\rangle_{\mathbb R^{|\Lambda|}}
  =
  \sum_{i=1}^{|\Lambda|}c_i\sqrt{w_{i}}f(x_i)
  =\langle f,\sum_{i=1}^{|\Lambda|}c_i\sqrt{w_{i}}\phi_{x_i}\rangle_\phi=
  \langle f, S^T_{\Lambda,W}{\bf c}\rangle_\phi,
$$
where
\begin{equation}\label{sampling-conjugate}
    S^T_{\Lambda,W}{\bf c}:= \sum_{i=1}^{|\Lambda|}\sqrt{w}_{i}c_i\phi_{x_i},\qquad {\bf c}=(c_1,\dots,c_{|\Lambda|})^T.
\end{equation}
Hence, $S^T_{\Lambda,W}:\mathbb R^{|\Lambda|}\rightarrow\mathcal N_\phi$ is the adjoint of $S_{\Lambda,W}$. 
Define  $ L_{\phi,\Lambda,W}:\mathcal N_\phi\rightarrow \mathcal N_\phi$ by
\begin{equation}\label{weight-empirical-operaotr}
L_{\phi,\Lambda,W}f:=\sum_{i=1}^{|\Lambda|}w_{i}f(x_i)\phi_{x_i}=S_{\Lambda,W}^TS_{\Lambda,W}f 
\end{equation} 
and
 $ \mathcal L_{\phi,\psi,\Lambda,W}:\mathcal N_\psi\rightarrow\mathcal N_\phi$ by
\begin{equation}\label{weight-empirical-operaotr-1}
     \mathcal L_{\phi,\psi,\Lambda,W}f:=\sum_{i=1}^{|\Lambda|}w_{i}f(x_i)\phi_{x_i}.
\end{equation}
If $\beta=0$, we write $\mathcal L_{\phi,\Lambda,W}=:\mathcal L_{\phi,\psi,\Lambda,W}$ for the sake of brevity.
Then $L_{\phi,\Lambda,W}$, $\mathcal L_{ \phi,\psi,\Lambda,W}$ and $\mathcal L_{ \phi,\Lambda,W}$  can be regarded as  empirical versions of  $L_\phi$, $J_{\phi,\psi}^T$ and $J_{\phi}^T$, respectively. Recalling the weighted kernel matrix 
\begin{equation}\label{def.psi}
\Psi_{\Lambda,W}:=W^{1/2}\Phi_{\Lambda}W^{1/2}=S_{\Lambda,W} S^T_{\Lambda,W}
\end{equation} 
for $\Phi_{\Lambda}=(\phi(x_i\cdot x_j))_{i,j=1}^{|\Lambda|}$ and $W:=\mbox{diag}(w_{1},\dots,w_{|\Lambda|})$, we obtain from \cite[Proposition 9]{rosasco2010learning} that the spectra of $\Psi_{\Lambda,W}$ and $L_{\phi,\Lambda,W}$ are the same up to the zero, that is 
\begin{equation}\label{spectral-relation}
   \sigma_\ell(\Psi_{\Lambda,W})
=\sigma_\ell(L_{\phi,\Lambda,W}),\qquad \ell=1,\dots,|\Lambda|,
\end{equation}
where  $\sigma_\ell(A)$ denotes the $\ell$-th largest eigenvalue of the matrix or operator $A$.  It is easy to derive
\begin{equation}\label{operator-norm-bound}
    \|L_\phi\|_{\phi\rightarrow\phi},\|L_{\phi,\Lambda,W_s}\|_{\phi\rightarrow\phi}, 
    \|\mathcal L_{\phi,\psi,\Lambda,W}\|_{\psi\rightarrow\phi},\|J_{\phi,\psi}^T\|_{\psi\rightarrow\phi}\leq \kappa,
\end{equation}
where $\kappa:= \sqrt{\phi(1)} $ and $\|A\|_{\psi\rightarrow\phi}$  denotes  the spectral norm of the operator $A:\mathcal N_\psi\rightarrow\mathcal N_\phi$.
Moreover, all the mentioned operators in Table \ref{Table:notation-operators}  are trace-class,  and consequently Hilbert-Schmidt and compact. 

 \begin{table}[H]\label{Table:notation-operators}
    \begin{center}
	\begin{tabular}{|c|c|c|c|c|c|}
		\hline
		 \multicolumn{3}{|c|} {trace-class operators}   &  \multicolumn{3}{|c|} {positive operators}   \\
		\hline Notation & Range  &Definition &Notation & Range &Definition\\
  \hline
$J_{\phi,\psi}$ & $\mathcal N_\phi\rightarrow \mathcal N_\psi$ &  inclusion  & $L_{\phi,\psi}$   & $\mathcal N_\phi\rightarrow\mathcal N_\phi$& $J_{\phi,\psi}^TJ_{\phi,\psi}$ \\
   \hline
 $J_{\phi,\psi}^T$ & $\mathcal N_\psi\rightarrow\mathcal N_\phi$& \eqref{population-conuga} &  $\mathcal L_{\phi,\psi}$ & $\mathcal N_\psi\rightarrow \mathcal N_\psi$& $J_{\phi,\psi} J_{\phi,\psi}^T$\\
 \hline
$J_{\phi}$ & $\mathcal N_\phi\rightarrow L^2$ &  $J_{\phi,\psi}$, $\beta=0$  &  $L_{\phi}$   & $\mathcal N_\phi\rightarrow\mathcal N_\phi$ & $L_{\phi,\psi},\beta=0$\\
\hline
$J_{\phi}^T$ & $ L^2\rightarrow\mathcal N_\phi$&  $J_{\phi,\psi}^T,\beta=0$& $\mathcal L_{\phi}$   & $L^2\rightarrow L^2$& $\mathcal L_{\phi,\psi}, \beta=0$\\
\hline
 $S_{\Lambda,W}$ & $\mathcal N_\phi\rightarrow \mathbb R^{|\Lambda|}$& \eqref{Sampling-operator} &$L_{\phi,\Lambda,W}$ & $\mathcal N_\phi\rightarrow\mathcal N_\phi$ & $S_{\Lambda,W}^TS_{\Lambda,W}$ \\
 \hline
 $S_{\Lambda,W}^T$ & $\mathbb R^{|\Lambda|}\rightarrow \mathcal N_\phi$ &\eqref{sampling-conjugate}  &  &  &   \\
  \hline
 $\mathcal L_{\phi,\psi,\Lambda,W}$& $\mathcal N_\psi\rightarrow\mathcal N_\phi$ & \eqref{weight-empirical-operaotr-1} &  &  &   \\
\hline
$\mathcal L_{\phi,\Lambda,W}$  &  $L^2\rightarrow\mathcal N_\phi$ &   $\mathcal L_{\phi,\psi,\Lambda,W}, \beta=0$ &  &  &   \\
\hline 
    \end{tabular}
    \end{center}
      \caption{Notations of operators} 
    \end{table}

\subsection{SBF interpolation}

Based on  \eqref{minimal-norm-interpolation}, it is easy to derive
\begin{equation}\label{KI}
    f_{D}=\sum_{i=1}^{|\Lambda|}a_i\phi_{x_i}, \qquad \mbox{with}\
   (a_1,\dots,a_{|\Lambda|})^T= :{\bf a}_{D}=\Phi_D^{-1}{\bf y}_D 
\end{equation} 
with  ${\bf y}_D=(y_1,\ldots, y_{|\Lambda|})^T$. 
Therefore, for any diagonal matrix  $W:=\mbox{diag}(w_{1},\dots,w_{|\Lambda|})$ with $w_i>0$, direct computation together with \eqref{def.psi} yields
\begin{equation}\label{Operator-representation:KI}
    f_D=S_{\Lambda,W}^T(\Psi_{D,W})^{-1}{\bf y}_{\Lambda,W}=S_{\Lambda,W}^T(S_{\Lambda,W},S_{\Lambda,W}^T)^{-1}{\bf y}_{\Lambda,W}
\end{equation}
for
   ${\bf y}_{\Lambda,W}:=( \sqrt{w_{1}}y_1,\dots,\sqrt{w_{|\Lambda|}}y_{|\Lambda|})^T.
$ 
Define further 
\begin{equation}\label{clean-KI}
     f^\diamond_D:=\mathcal I_D f^*:=\sum_{i=1}^{|\Lambda|}a_i^*\phi_{x_i},\qquad \mbox{with}\  (a_1^*,\dots,a^*_{|\Lambda|})^T= :{\bf a}_{D}^*=\Phi_D^{-1}{\bf f}^*
\end{equation}
for ${\bf f}^*=(f^*(x_1),\dots,f^*(x_{|\Lambda|}))^T$ as the noisy free version of $f_D$. If $f^*\in\mathcal N_\phi$, then
\begin{equation}\label{clean-KI-operator}
    \mathcal I_D f^*=S_{\Lambda,W}^T(S_{\Lambda,W},S_{\Lambda,W}^T)^{-1}S_{\Lambda,W} {\bf f}^*
\end{equation}
 
The approximation performance of KI can be measured by distribution of scattered data: mesh norm, separation radius and mesh ratio. 
 The mesh norm   of $\Lambda$, defined  by
$
                 h_{\Lambda}:=\max_{ x\in\mathbb S^d}\min_{ x_{i}\in \Lambda}\mbox{dist}( x,x_{i}), 
$
quantifies the smallest radius of spherical caps with centers in $\Lambda$ that can cover $\mathbb S^d$, where $\mbox{dist}(x,x')$ denotes the geodesic distance between $x,x'\in\mathbb S^d$. The  separation radius, given as $q_{\Lambda}:=\frac12\min_{i\neq i'}\mbox{dist}( x_{i}, x_{i'})$, describes the minimal distance between elements in $\Lambda$. The mesh ratio, written as $\rho_{\Lambda}:=\frac{h_{\Lambda}}{q_{\Lambda}}\geq 1$, measures how uniformly the points of $\Lambda$ are distributed on $\mathbb S^d$. 
For a given $\tau\geq1$,  
  $\Lambda$ is  said to be $\tau$-quasi-uniform  if  $\rho_{\Lambda}\leq \tau$.
The following approximation rates for KI derived in \cite{levesley2005approximation,narcowich2007direct} shows the performance of $f^\diamond_D$ in fitting noise-free data.

\begin{lemma}\label{Lemma:Narcowich-interpolation}
Suppose that $\phi$ is an SBF with $\hat{\phi}_k\stackrel{d}\sim k^{-2\gamma}$ satisfying $2\gamma>d$ and \eqref{kernel-relation} holds with $0<\beta\leq \min\{\alpha,1\}$ and $\alpha\gamma>d/2$.
If $f^*\in \mathcal N_\varphi$, then  
\begin{equation}\label{Error-est-noiseless}
     \|f^*-f_D^\diamond\|_{\psi}\leq C \rho_\Lambda^{\max\{1-\alpha,0\}}
     h_\Lambda^{(\alpha-\beta)\gamma},
\end{equation}
 where $C$ is a constant depending only on $d, \alpha, \beta,$ and $\gamma$.   
\end{lemma}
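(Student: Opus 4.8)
The plan is to reduce the error bound for $f_D^\diamond = \mathcal I_D f^*$ in the $\mathcal N_\psi$ metric to the two classical ingredients: (i) a best-approximation estimate for $f^*$ by spherical polynomials in the relevant Sobolev-type norm, since $\hat\phi_k \stackrel{d}{\sim} k^{-2\gamma}$ means $\mathcal N_\phi$ is norm-equivalent to the Sobolev space $\mathbb H^\gamma(\mathbb S^d)$ and $\mathcal N_\varphi, \mathcal N_\psi$ to $\mathbb H^{\alpha\gamma}, \mathbb H^{\beta\gamma}$ respectively; and (ii) a sampling/stability inequality controlling low-order norms of functions on $\mathbb S^d$ by their discrete values on the $\tau$-quasi-uniform set $\Lambda$, together with the minimal-norm (variational) property of the interpolant. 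First I would invoke the Lagrange-function / norming-set machinery of \cite{narcowich2007direct}: because $\phi$ generates $\mathbb H^\gamma$, interpolation at $\Lambda$ is well posed, $\mathcal I_D$ is a bounded projection onto the span of $\{\phi_{x_i}\}$, and one has the ``escape'' inequality $\|\mathcal I_D g\|_\psi \le C\rho_\Lambda^{\max\{1-\alpha,0\}}\|g\|_\varphi$ type bounds when $g$ is sampled — but the cleanest route is the classical doubling trick.

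Concretely, I would write $f^* - f_D^\diamond = (f^* - p) - \mathcal I_D(f^* - p)$ for an arbitrary $p \in \mathcal P_s^d$ with $s \sim h_\Lambda^{-1}$, using that $\mathcal I_D$ reproduces polynomials of degree $\le s$ only if $\mathcal P_s^d \subset \mathcal N_\phi$ — which holds since $\mathcal P_s^d$ is finite-dimensional and contained in every native space. Actually $\mathcal I_D$ fixes $\phi_{x_i}$-combinations, not polynomials, so instead I use the variational characterization: among all $g \in \mathcal N_\phi$ interpolating $f^*$ on $\Lambda$, $f_D^\diamond$ has minimal $\|\cdot\|_\phi$-norm, hence $\|f_D^\diamond\|_\phi \le \|f^*\|_\phi$ and more usefully $\|f^* - f_D^\diamond\|_\phi^2 = \|f^*\|_\phi^2 - \|f_D^\diamond\|_\phi^2 \le \ldots$; combined with a Bernstein-type inverse inequality on $\mathcal P_s^d$ and the sampling inequality
$$
\|g\|_{\mathbb H^{\beta\gamma}} \le C\big(h_\Lambda^{(\alpha-\beta)\gamma}\|g\|_{\mathbb H^{\alpha\gamma}} + \rho_\Lambda^{\max\{1-\alpha,0\}} h_\Lambda^{\beta\gamma - d/2}\,\|g|_\Lambda\|_{\ell^2}\big)
$$
applied to $g = f^* - f_D^\diamond$, whose samples on $\Lambda$ vanish, the discrete term drops and one is left with $\|f^* - f_D^\diamond\|_\psi \le C h_\Lambda^{(\alpha-\beta)\gamma}\|f^* - f_D^\diamond\|_\varphi$. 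A further bootstrap, bounding $\|f^* - f_D^\diamond\|_\varphi$ by $\|f^*\|_\varphi$ up to the $\rho_\Lambda$ factor via the $L_2$-projection-norm estimates of \cite{hangelbroek2011kernel} (this is exactly where the power $\rho_\Lambda^{\max\{1-\alpha,0\}}$ and the hypothesis $\alpha\gamma > d/2$ enter, guaranteeing the embedding $\mathbb H^{\alpha\gamma}\hookrightarrow C(\mathbb S^d)$ so point evaluation is bounded), closes the argument.

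The main obstacle is the $\rho_\Lambda^{\max\{1-\alpha,0\}}$ factor and the mis-specified regime $f^* \notin \mathcal N_\phi$ (i.e. $\alpha < 1$): when $\alpha \ge 1$ the interpolant's $L_2$-projection-type operator is uniformly bounded independent of geometry and one gets the clean $h_\Lambda^{(\alpha-\beta)\gamma}$ rate, but for $\alpha < 1$ the stability of interpolation of functions only in $\mathbb H^{\alpha\gamma}$ degrades with the mesh ratio, and controlling this requires the refined Lebesgue-constant / Lagrange-function decay estimates of \cite{hangelbroek2010kernel,narcowich2007direct} rather than a soft argument. Everything else — the Sobolev norm equivalences from $\hat\phi_k \stackrel{d}{\sim}k^{-2\gamma}$, the best-approximation rate $E_s(f^*)_{\mathbb H^{\beta\gamma}} \le C s^{-(\alpha-\beta)\gamma}\|f^*\|_{\mathbb H^{\alpha\gamma}}$, and the choice $s \sim h_\Lambda^{-1}$ — is routine, so I would state those as lemmas and concentrate the write-up on the sampling inequality with the sharp mesh-ratio dependence.
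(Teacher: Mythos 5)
Your outline matches the route the paper itself takes: Lemma~\ref{Lemma:Narcowich-interpolation} is not proved in the paper but quoted from \cite{levesley2005approximation,narcowich2007direct}, and the paper's only remark on its proof (in Section~\ref{Sec:integral-operator}) is that it follows directly from the Zeros Lemma \eqref{Zeros-lemma} applied to $f^*-f_D^\diamond$, which vanishes on $\Lambda$ --- exactly your sampling-inequality step once the discrete term drops. The one substantive ingredient you correctly isolate but defer to the literature, namely the stability bound $\|f^*-f_D^\diamond\|_\varphi\le C\rho_\Lambda^{\max\{1-\alpha,0\}}\|f^*\|_\varphi$ in the mis-specified regime $\alpha<1$ (where your Pythagorean identity in $\mathcal N_\phi$ is unavailable), is likewise taken as given by the paper, so your proposal is as complete as the source's treatment.
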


The error analysis in Lemma  \ref{Lemma:Narcowich-interpolation} is nice in four aspects. At first, the derived approximation rates are optimal in the sense that, for $\tau$-quasi-uniform  $\Lambda$,  the approximation rates of order $\mathcal O(|\Lambda|^{-\frac{(\alpha-\beta)\gamma}{d}})$    cannot be improved further due to the inverse theorem established in \cite{narcowich2007direct}. Then, Lemma \ref{Lemma:Narcowich-interpolation} succeeds in conquer NSB such that    $f^*$ can be out of $\mathcal N_\phi$, i.e. $\alpha<1$. Thirdly, the error analysis is carried under numerous metrics, i.e. $\|\cdot\|_\psi$ for any $\psi$ satisfying \eqref{kernel-relation} with $0\leq \beta\leq \min\{1,\alpha\}$, showing its adaptivity to the embedding spaces.
At last, there is not any saturation phenomenon \cite{gerfo2008spectral} for $f_D$ in terms that the approximation rates derived in \eqref{Error-est-noiseless} cover  all range of $\alpha\geq1$. Besides the approximation error in Lemma \ref{Lemma:Narcowich-interpolation}, the stability of KI has also been extensively studied in terms of minimal eigenvalue of the kernel matrix \cite{narcowich1998stability}, Lebesgue constant \cite{hangelbroek2010kernel}, i.e. $\|\mathcal I_D\|_{L^\infty\rightarrow L^\infty}$, and $L^\infty$ norm of the $L^2$ projection \cite{hangelbroek2011kernel}, i.e. $\|\mathcal I_D\|_{L^2(\mathbb S^d)\rightarrow L^\infty(\mathbb S^d)}$. For example, it can be found in \cite{hangelbroek2010kernel,hangelbroek2012polyharmonic} that the Lebesgue constant of KI for some special kernels can be bounded by a constant, provided $\Lambda$ is $\tau$-quasi-uniform, showing that KI does not amplify the negative effect of the noise.

However,  facing with scattered data for large noise, KI is frequently not efficient  since the approximation error sometimes increases with respect to the size of data \cite{liu2024weighted,lin2023dis}. In fact, it was deduced in \cite{lin2023dis} the following inconsistency result for KI \eqref{minimal-norm-interpolation}.
\begin{lemma}\label{Lemma:inconsistence}
Let $\Lambda$ be $\tau$-quasi-uniform for some $\tau>1$, $\hat \phi_k\stackrel{d}\sim k^{-2\gamma}$  with $\gamma> d/2$ and $\{\varepsilon_i^*\}_{i=1}^{|\Lambda|}$ be a set of random variables whose supports are contained in $[-M', -\theta M'] \cup [\theta M', M'] $ for some $0< \theta <1$ and $M'>0$.
Then for all $f^* \in \mathcal N_\phi$ with $f^*|_\Lambda =0,$ there holds almost surely  
\begin{equation}
     \|f_{D}^\diamond-f^*\|_\phi\geq \tilde{C},
\end{equation}
where $f_D^\diamond$ is defined by \eqref{clean-KI}   and
$\tilde{C}>0$  depends only on $\theta, \tau,\gamma,M'$ and $d$.
\end{lemma}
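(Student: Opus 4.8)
The displayed quantity must be the noisy minimal-norm interpolant $f_D$ of \eqref{KI} (were it literally the noise-free $\mathcal I_Df^*$ of \eqref{clean-KI}, then $f^*|_\Lambda=0$ would make it vanish identically and the claim would collapse to $\|f^*\|_\phi\ge\tilde C$, which fails for small $f^*$); since here $y_i=f^*(x_i)+\varepsilon_i^*=\varepsilon_i^*$, we have $f_D=\sum_{i=1}^{|\Lambda|}a_i\phi_{x_i}$ with ${\bf a}_{D}=\Phi_\Lambda^{-1}{\boldsymbol\varepsilon}^{*}$ and ${\boldsymbol\varepsilon}^{*}=(\varepsilon_1^*,\dots,\varepsilon_{|\Lambda|}^*)^{T}$. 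The inconsistency stems from a simple mechanism: a noise vector that stays a fixed distance from the origin at every node forces $\|f_D\|_\phi$ to be bounded below, and by orthogonality this keeps $f_D$ away from $f^*$. So the plan is to (i) reduce the claim to a lower bound on $\|f_D\|_\phi$ and (ii) extract that bound from the kernel matrix $\Phi_\Lambda$.

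\emph{Step 1 (orthogonality).} By the reproducing property \eqref{Reproducing-property}, $\langle f^*,\phi_{x_i}\rangle_\phi=f^*(x_i)=0$ for every $i$, so $f^*$ is $\mathcal N_\phi$-orthogonal to $V_\Lambda:=\mathrm{span}\{\phi_{x_1},\dots,\phi_{x_{|\Lambda|}}\}$. Since $f_D\in V_\Lambda$, Pythagoras gives $\|f_D-f^*\|_\phi^2=\|f_D\|_\phi^2+\|f^*\|_\phi^2\ge\|f_D\|_\phi^2$, and it suffices to bound $\|f_D\|_\phi$ below by a positive constant independent of $|\Lambda|$ and of $f^*$.

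\emph{Step 2 (kernel-matrix estimate).} Using $\langle\phi_{x_i},\phi_{x_j}\rangle_\phi=\phi(x_i\cdot x_j)$ one gets $\|f_D\|_\phi^2={\bf a}_{D}^{T}\Phi_\Lambda{\bf a}_{D}=({\boldsymbol\varepsilon}^{*})^{T}\Phi_\Lambda^{-1}{\boldsymbol\varepsilon}^{*}\ge\|{\boldsymbol\varepsilon}^{*}\|_2^{2}/\lambda_{\max}(\Phi_\Lambda)$. Now $|\varepsilon_i^*|\ge\theta M'$ gives $\|{\boldsymbol\varepsilon}^{*}\|_2^{2}\ge|\Lambda|\,\theta^{2}(M')^{2}$, while $\lambda_{\max}(\Phi_\Lambda)\le\mathrm{trace}(\Phi_\Lambda)=|\Lambda|\,\phi(1)$ with $\phi(1)=\sum_{k\ge0}\hat{\phi}_kZ(d,k)/\Omega_d<\infty$, finite and depending only on $d,\gamma$ since $\hat{\phi}_k\stackrel{d}{\sim}k^{-2\gamma}$, $Z(d,k)\stackrel{d}{\sim}k^{d-1}$ and $2\gamma>d$. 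The two factors of $|\Lambda|$ cancel, so
\[
\|f_D-f^*\|_\phi^{2}\ \ge\ \frac{|\Lambda|\,\theta^{2}(M')^{2}}{|\Lambda|\,\phi(1)}\ =\ \Bigl(\frac{\theta M'}{\kappa}\Bigr)^{2}=:\tilde C^{2},\qquad \kappa=\sqrt{\phi(1)}\ \text{as in }\eqref{operator-norm-bound},
\]
i.e.\ $\tilde C=\theta M'/\kappa$ depends only on $\theta,M',\gamma,d$ (and, vacuously, on $\tau$); the bound holds for every admissible realization of $\{\varepsilon_i^*\}$, hence in particular almost surely.

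\emph{Anticipated main obstacle.} There is no real analytic difficulty -- the argument is an orthogonality reduction followed by a crude trace bound. The only point that needs care is uniformity in $|\Lambda|$: the $|\Lambda|$-linear lower bound on $\|{\boldsymbol\varepsilon}^{*}\|_2^{2}$ has to be matched by an $|\Lambda|$-linear upper bound on $\lambda_{\max}(\Phi_\Lambda)$, and $2\gamma>d$ is exactly what makes the proportionality constant $\phi(1)$ (hence $\tilde C$) finite and positive. Note that $\tau$-quasi-uniformity is not actually needed for this coarse estimate; it, or a Narcowich--Ward type bound on $\lambda_{\max}(\Phi_\Lambda)$, would matter only for sharpening the constant.
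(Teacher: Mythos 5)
Your argument is correct. One preliminary remark: the paper does not actually prove Lemma~\ref{Lemma:inconsistence} anywhere in Section~\ref{Sec:proof} --- it is imported verbatim from the cited reference \cite{lin2023dis} --- so there is no internal proof to compare against. Your diagnosis of the misprint is the right one: with $f^*|_\Lambda=0$ the object defined literally by \eqref{clean-KI} vanishes and the claim would reduce to $\|f^*\|_\phi\ge\tilde C$, which is false; the intended quantity is the interpolant of the noisy data $y_i=\varepsilon_i^*$ as in \eqref{KI}. Your two steps are both sound: the reproducing property \eqref{Reproducing-property} makes $f^*$ orthogonal to $\mathrm{span}\{\phi_{x_i}\}$ so that $\|f_D-f^*\|_\phi\ge\|f_D\|_\phi$, and the identity $\|f_D\|_\phi^2=(\boldsymbol\varepsilon^*)^T\Phi_\Lambda^{-1}\boldsymbol\varepsilon^*\ge\|\boldsymbol\varepsilon^*\|_2^2/\lambda_{\max}(\Phi_\Lambda)$ combined with $\|\boldsymbol\varepsilon^*\|_2^2\ge|\Lambda|\theta^2(M')^2$ (almost surely, from the support assumption) and $\lambda_{\max}(\Phi_\Lambda)\le\mathrm{trace}(\Phi_\Lambda)=|\Lambda|\phi(1)=|\Lambda|\kappa^2$ gives $\tilde C=\theta M'/\kappa$, with $\kappa$ finite precisely because $2\gamma>d$ makes $\sum_k\hat\phi_kZ(d,k)$ converge, consistent with \eqref{operator-norm-bound}. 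Your closing observation that $\tau$-quasi-uniformity is not needed for this crude trace bound is also accurate; it would only enter if one wanted a sharper constant via a Narcowich--Ward-type eigenvalue estimate.
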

 
Lemma \ref{Lemma:inconsistence} demonstrates that there always exists a ``bad'' $f^*\in\mathcal N_\phi$ such that $f_D^\diamond$ performs not  well, provided the  noise is relatively large.  
It should be highlighted that  Lemma \ref{Lemma:inconsistence} does not contradict with the existing stability analysis 
\cite{narcowich1998stability,hangelbroek2010kernel,hangelbroek2011kernel,hangelbroek2012polyharmonic}. 
To be detailed, the noise presented in Lemma \ref{Lemma:inconsistence} is large, implying that KI, even with small Lebesgue constant, cannot provide   good approximation of the target function, which is  known as the over-fitting phenomenon in the community of machine  theory \cite{cucker2007learning}.

\section{Operator Differences and Sampling Mechanisms}\label{Sec:integral-operator}
The integral operator approach, originally proposed by \cite{smale2004shannon,smale2005shannon} in machine learning for random samples, aims to quantify the performance of learning algorithms. Via operator representations   and operator differences based on concentration inequalities  in probability. As spherical data are frequently deterministically sampled, there lack corresponding concentration inequalities to derive tight  bounds for operator differences. In this section, we show that   quadrature rules and delicate sampling mechanisms are sufficient to mimic  concentration inequalities in probability for random samples.

\subsection{Operator differences}
Numerous elegant analysis tools, including the norming set approach \cite{jetter1999error,levesley2005approximation}, best approximation of polynomial interpolants method \cite{narcowich2002scattered,narcowich2007direct}, Zeros Lemma strategy \cite{hangelbroek2010kernel,hangelbroek2012polyharmonic} and  sampling inequalities \cite{hesse2017radial}, have  been developed for scattered data fitting on spheres. Besides  sampling inequalities, all these  tools are only available to  noise-free data. For example, Zeros Lemma presented in \cite[Corollary A.13]{hangelbroek2012polyharmonic} shows that if $\hat{\phi}_k\stackrel{d}\sim k^{-2\gamma}$  with $\gamma>d/2$ and \eqref{kernel-relation} holds for $\alpha\geq\beta$  and  $\alpha\gamma>d/2$, then for any $\Lambda$ with sufficiently small $h_\Lambda$ and any   $f\in\mathcal N_\varphi$ satisfying $f|_\Lambda=0$, there holds
\begin{equation}\label{Zeros-lemma}
    \|f\|_\psi\leq C_1 h_\Lambda^{(\alpha-\beta)\gamma}\|f\|_{\varphi},
\end{equation}
which implies Lemma \ref{Lemma:Narcowich-interpolation} directly. 

Sampling inequalities demonstrate that for smooth functions, their $L^2$ norms cannot be large if their  empirical $\ell^2$ norms are small. In particular, under the same setting as above, it can be found in \cite[Theorem 5.1]{hesse2017radial} that for any $f\in\mathcal N_\phi$, there holds
\begin{equation}\label{Sampling-inequality-1}
    \|f\|_{L^2}\leq C_2\left(h_\Lambda^\gamma\|f\|_\phi+
    h_\Lambda^{d/2}|\Lambda|^{1/2}\|f\|_\Lambda\right),
\end{equation}
where $\|f\|_\Lambda^2=\frac1{|\Lambda|}\sum_{i=1}^{|\Lambda|}|f(x_i)|^2$.
The established sampling inequality is available to noisy data by adding an additional empirical norm of $f$. However, the inequality \eqref{Sampling-inequality-1} only holds for $f\in\mathcal N_\phi$ under the  $L^2$ norm. 

The basic idea of the integral operator approach, as shown in \cite{smale2004shannon,smale2005shannon,smale2007learning,caponnetto2007optimal} for machine learning and \cite{Feng2021radial,lin2021subsampling,lin2023dis,liu2024weighted} for spherical data fitting, is to present tight bounds for three operator  (or functional) differences. The first one is 
\begin{equation}\label{Def.RD}
     \mathcal R_{\Lambda,\lambda,W,u,v} := \|(L_\phi+\lambda I)^{-u}(L_\phi-L_{\phi,\Lambda,W})(L_\phi+\lambda I)^{-v}\|_{\phi\rightarrow\phi},\qquad 0\leq u,v\leq 1
\end{equation}
that quantifies the difference between $L_\phi$ and its empirical version $L_{\phi,\Lambda,W}$. 
The second one is 
\begin{equation}\label{def:S}
    \mathcal S_{\Lambda,\lambda,W,u}:=\|( L_\phi+\lambda I)^{-u}
  (\mathcal  L_{\phi,\Lambda,W} -J_{\phi,\psi}^T)  \|_{0\rightarrow\phi}, \qquad 0\leq u\leq 1
\end{equation}
to measure  the similarity of $J_{\phi,\psi}^T$ and its empirical counterpart $\mathcal  L_{\phi,\Lambda,W}$, where $\|\cdot\|_{0\rightarrow\phi}$ denotes $\|\cdot\|_{L^2\rightarrow \mathcal N_\phi}$.
The last one, mainly introduced to measure the negative effect of noise, is given by 
\begin{equation}\label{def:P} 
     \mathcal P_{D,\lambda,W,\beta} := \|(  L_\phi+\lambda I)^{-1/2}(S^T_{\Lambda,W} {\bf y}_{\Lambda,W}-\mathcal L_{\phi,\psi, \Lambda,W} f^*)\|_\phi,
\end{equation}
 referring to the difference between $\mathcal L_{\phi,\psi ,\Lambda,W} f^*$ and its empirical version $S^T_{\Lambda,W} {\bf y}_{\Lambda,W}$. It follows from \eqref{sampling-conjugate} and \eqref{weight-empirical-operaotr-1} directly that $\varepsilon_i=0$ in \eqref{Model1:fixed} implies $\mathcal P_{D,\lambda,W,\beta}=0$. In the following proposition, we show that tight bounds of operator differences imply sampling inequalities.


\begin{proposition}\label{Prop:operator-sampling}
Let $W=\diag(w_1,\dots,w_{|\Lambda|})$ with $w_i>0$ and $\lambda>0$. If \eqref{kernel-relation} holds for some $0\leq \beta\leq 1$ and 
$\mathcal R_{\Lambda,\lambda,W,1/2,1/2}\leq \tilde{c}<1/2$ for some $\tilde{c}>0$, then  
\begin{eqnarray}
     \mathcal Q_{\Lambda,\lambda,W}&:=& \| (L_\phi+\lambda I)^{1/2} (L_{\phi,\Lambda,W}+\lambda I)^{-1/2}\|_{\phi\rightarrow\phi} \leq \sqrt{\frac{1}{1-\tilde{c}}},\label{def:Q}\\
     \mathcal Q^*_{\Lambda,\lambda,W}&:=& \|( L_\phi+\lambda I)^{-1/2} (L_{\phi,\Lambda,W}+\lambda I)^{1/2}\|_{\phi\rightarrow\phi}
     \leq\sqrt{\frac{1-\tilde{c}}{1-2\tilde{c}}},\label{def:Q*}\\
     \| f\|_{\psi}  &\leq&
    \frac{1}{(1-\tilde{c})^{(1-\beta)/2}}  \lambda^{-\beta/2} (\lambda^{1/2}\|f\|_\phi+\|f\|_{\Lambda,W}), \qquad f\in\mathcal N_\phi, \label{sampling-inequality-sobolev-1.1}\\
    \|f\|_{\Lambda,W}  &\leq&
     \sqrt{\frac{1-\tilde{c}}{1-2\tilde{c}}}(\|f\|_{L^2}+\lambda^{1/2}\|f\|_\phi),  \qquad f\in\mathcal N_\phi, \label{sampling-inequality-sobolev-2.1}
\end{eqnarray}
where $\|f\|_{\Lambda,W}^2:=\frac1{|\Lambda|}\sum_{i=1}^{|\Lambda|}w_i|f(x_i)|^2$ denotes the empirical weighted  $\ell^2$-norm of $f$.
\end{proposition}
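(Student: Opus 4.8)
Throughout write $A:=L_\phi+\lambda I$ and $B:=L_{\phi,\Lambda,W}+\lambda I$; both are bounded, self-adjoint and positive on $\mathcal N_\phi$ with $A,B\succeq\lambda I\succ0$, so all fractional powers and inverses below are defined by functional calculus. The plan is to reduce every claim to a single two-sided operator comparison between $A$ and $B$. Since $A-B=L_\phi-L_{\phi,\Lambda,W}$, the hypothesis $\mathcal R_{\Lambda,\lambda,W,1/2,1/2}\le\tilde c$ says exactly that the self-adjoint operator $T:=A^{-1/2}(L_\phi-L_{\phi,\Lambda,W})A^{-1/2}$ satisfies $\|T\|_{\phi\rightarrow\phi}\le\tilde c$, hence $-\tilde c\,I\preceq T\preceq\tilde c\,I$; using $L_\phi-L_{\phi,\Lambda,W}=A^{1/2}TA^{1/2}$ this gives the factorization $B=A^{1/2}(I-T)A^{1/2}$ and the comparison $(1-\tilde c)A\preceq B\preceq(1+\tilde c)A$.

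For \eqref{def:Q} I would use $\mathcal Q_{\Lambda,\lambda,W}^2=\|A^{1/2}B^{-1/2}\|_{\phi\rightarrow\phi}^2=\|A^{1/2}B^{-1}A^{1/2}\|_{\phi\rightarrow\phi}$ (as $(A^{1/2}B^{-1/2})(A^{1/2}B^{-1/2})^{*}=A^{1/2}B^{-1}A^{1/2}$), and from $B=A^{1/2}(I-T)A^{1/2}$ one gets $A^{1/2}B^{-1}A^{1/2}=(I-T)^{-1}$, so a Neumann series gives $\|(I-T)^{-1}\|\le(1-\|T\|)^{-1}\le(1-\tilde c)^{-1}$. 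For \eqref{def:Q*} I would instead invert the roles of $A$ and $B$: setting $\tilde T:=B^{-1/2}(L_\phi-L_{\phi,\Lambda,W})B^{-1/2}=B^{-1/2}A^{1/2}\,T\,A^{1/2}B^{-1/2}$ one has $A=B^{1/2}(I+\tilde T)B^{1/2}$ and $\|\tilde T\|\le\|A^{1/2}B^{-1/2}\|^2\|T\|=\mathcal Q_{\Lambda,\lambda,W}^2\,\tilde c\le\tilde c/(1-\tilde c)$, which is $<1$ precisely because $\tilde c<1/2$; hence $(\mathcal Q^*_{\Lambda,\lambda,W})^2=\|A^{-1/2}B^{1/2}\|^2=\|B^{1/2}A^{-1}B^{1/2}\|=\|(I+\tilde T)^{-1}\|\le(1-\tfrac{\tilde c}{1-\tilde c})^{-1}=(1-\tilde c)/(1-2\tilde c)$. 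This is the only step where the strict bound $\tilde c<1/2$ is actually used.

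For the two sampling inequalities I would first record, for every $f\in\mathcal N_\phi$, the identities $\langle L_\phi f,f\rangle_\phi=\|f\|_{L^2}^2$ (since $L_\phi=J_\phi^TJ_\phi$ with $J_\phi$ the inclusion $\mathcal N_\phi\hookrightarrow L^2$), $\langle L_{\phi,\Lambda,W}f,f\rangle_\phi=\|S_{\Lambda,W}f\|_{\mathbb R^{|\Lambda|}}^2=\|f\|_{\Lambda,W}^2$ (from \eqref{weight-empirical-operaotr}), and $\|f\|_\psi=\|L_\phi^{(1-\beta)/2}f\|_\phi$ (see \eqref{norm-relation-123} in Lemma~\ref{Lemma:source-condition}); these give $\|A^{1/2}f\|_\phi\le\|f\|_{L^2}+\lambda^{1/2}\|f\|_\phi$ and $\|B^{1/2}f\|_\phi\le\|f\|_{\Lambda,W}+\lambda^{1/2}\|f\|_\phi$. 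Then \eqref{sampling-inequality-sobolev-2.1} follows from $\|f\|_{\Lambda,W}^2=\langle L_{\phi,\Lambda,W}f,f\rangle_\phi\le\langle Bf,f\rangle_\phi=\|B^{1/2}f\|_\phi^2=\|(B^{1/2}A^{-1/2})(A^{1/2}f)\|_\phi^2\le(\mathcal Q^*_{\Lambda,\lambda,W})^2\|A^{1/2}f\|_\phi^2$, using $\|B^{1/2}A^{-1/2}\|=\|A^{-1/2}B^{1/2}\|=\mathcal Q^*_{\Lambda,\lambda,W}$. For \eqref{sampling-inequality-sobolev-1.1} I would combine $L_\phi\preceq A\preceq(1-\tilde c)^{-1}B$ with the classical L\"owner--Heinz inequality (operator monotonicity of $x\mapsto x^{1-\beta}$, legitimate since $1-\beta\in[0,1]$) to get $L_\phi^{1-\beta}\preceq(1-\tilde c)^{-(1-\beta)}B^{1-\beta}$, and then use $B\succeq\lambda I$ together with the scalar estimate $\mu^{1-\beta}\le\lambda^{-\beta}\mu$ for $\mu\ge\lambda$ to get $B^{1-\beta}\preceq\lambda^{-\beta}B$; hence $\|f\|_\psi^2=\langle L_\phi^{1-\beta}f,f\rangle_\phi\le\lambda^{-\beta}(1-\tilde c)^{-(1-\beta)}\langle Bf,f\rangle_\phi\le\lambda^{-\beta}(1-\tilde c)^{-(1-\beta)}(\|f\|_{\Lambda,W}+\lambda^{1/2}\|f\|_\phi)^2$, and taking square roots gives the stated bound.

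The genuinely delicate part is the opening bookkeeping: turning the resolvent-symmetrized hypothesis into $(1-\tilde c)A\preceq B\preceq(1+\tilde c)A$ and then tracking carefully which of the two resolvents, $(L_\phi+\lambda I)^{\pm1/2}$ or $(L_{\phi,\Lambda,W}+\lambda I)^{\pm1/2}$, is used to conjugate at each step --- in particular the maneuver of estimating $\tilde T$ (with $B$-resolvents) through $T$ (with $A$-resolvents), which is what forces the requirement $\tilde c<1/2$. The second point needing care is that obtaining the sharp power $(1-\tilde c)^{-(1-\beta)/2}$ in \eqref{sampling-inequality-sobolev-1.1} requires operator monotonicity of fractional powers; a cruder estimate passing through $\|A^{1/2}B^{-1/2}\|$ directly would only yield the exponent $1/2$. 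Everything else --- the Neumann series, the elementary spectral-calculus estimates, and $a^2+b^2\le(a+b)^2$ --- is routine since all operators in play are positive, self-adjoint and bounded.
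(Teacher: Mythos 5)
Your proof is correct and follows essentially the same route as the paper: both arguments reduce everything to the comparison of $L_\phi+\lambda I$ and $L_{\phi,\Lambda,W}+\lambda I$ via $\mathcal R_{\Lambda,\lambda,W,1/2,1/2}$, derive \eqref{def:Q} and \eqref{def:Q*} from it (your explicit factorization $B=A^{1/2}(I-T)A^{1/2}$ plus Neumann series is equivalent to the paper's self-referential inequality $\mathcal Q^2\le 1+\tilde c\,\mathcal Q^2$), and then combine these with the norm identities $\|f\|_{L^2}=\|L_\phi^{1/2}f\|_\phi$, $\|f\|_{\Lambda,W}=\|L_{\phi,\Lambda,W}^{1/2}f\|_\phi$ and $\|f\|_\psi=\|L_\phi^{(1-\beta)/2}f\|_\phi$. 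The only substantive variation is that for the fractional power step in \eqref{sampling-inequality-sobolev-1.1} you invoke L\"owner--Heinz operator monotonicity where the paper uses the Cordes inequality $\|A^sB^s\|\le\|AB\|^s$; both yield the same constant $(1-\tilde c)^{-(1-\beta)/2}$.
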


The inequality \eqref{sampling-inequality-sobolev-1.1} improves the results in \cite{hesse2017radial} by presenting Sobolev-type sampling inequalities. Furthermore, \eqref{sampling-inequality-sobolev-2.1} yields an inverse sampling inequalities, demonstrating that the empirical weighted $\ell^2$ norm of a smooth function cannot be large if its $L^2$ norm  is small. Combining \eqref{sampling-inequality-sobolev-1.1} and \eqref{sampling-inequality-sobolev-2.1}, we get for any $f\in\mathcal N_\phi$
\begin{eqnarray}\label{Holder-type-norm}
   \| f\|_{\psi} 
   &\leq&
   \frac{1}{(1-\tilde{c})^{(1-\beta)/2}}\left(1+\sqrt{\frac{1-\tilde{c}}{1-2\tilde{c}}}\right)  \lambda^{-\beta/2} 
   \left(\lambda^{1/2}\|f\|_\phi+ \|f\|_{L^2} \right)\nonumber\\
   &\leq&
   \frac{1}{(1-\tilde{c})^{(1-\beta)/2}}\left(1+\sqrt{\frac{1-\tilde{c}}{1-2\tilde{c}}}\right)  \lambda^{-\beta/2} 
    \|(L_\phi+\lambda I)^{1/2}f\|_\phi, 
\end{eqnarray}
quantifying the relationship among different norms.

\subsection{Bounds of operator differences} 
This subsection devotes to bounding operator (or functional) differences defined by \eqref{Def.RD}, \eqref{def:S} and \eqref{def:P}. As $P_{D,\lambda,W,\beta}$ depends on the noise in model \eqref{Model1:fixed}, we introduce the following definition of sug-Gaussian noise. 
 \begin{definition}\label{Def:sub-Gaussian}
 A mean-zero random variable $\varepsilon$ is said to be  sub-Gaussian, if there is a $K\geq0$ (called as the width) such that 
\begin{equation}\label{moment-for-sub-Gaussian}
    E\{\exp(u \varepsilon)\}\leq \exp(Ku^2), \qquad \forall u\in\mathbb R.
\end{equation}
Furthermore, the sub-Gaussian norm of $\varepsilon$ is defined by 
\begin{equation}\label{sub-Gaussian-norm}
    \|\varepsilon\|_{SG}:=\inf\{t>0:E\{\exp(\varepsilon^2/t^2)\}\leq 2.
\end{equation}
\end{definition}

It was shown in \cite[Prop.2.5.2]{vershynin2018high} that  for an arbitrary sub-Gaussian random variable $\varepsilon$, its  sub-Gaussian norm only depends on its width and there are numerous equivalent definitions of the sub-Gaussian random variable. In particular, \eqref{moment-for-sub-Gaussian} implies
\begin{equation}\label{moment-for-squre-sub-gau-1}
  E\{\exp(\mu^2\varepsilon^2)\} \leq \exp(C_3\mu^2),\qquad \forall |\mu|\leq 1/C_3
\end{equation}
and
\begin{equation}\label{moment-for-squre-sub-gau-2}
    E\{\exp(\varepsilon^2/(C_4)^2)\}\leq 2,
\end{equation}
 where $C_3,C_4$ are constants depending only on $K$.
It can be found in \cite[Example 2.5.8]{vershynin2018high} that
if $\varepsilon$ is a mean-zero Gaussian random variable with variance $\sigma^2$, a random variable with symmetric Bernoulli distribution, or a random variable with $\|\varepsilon\|_\infty \leq M$, then it is also a sub-Gaussian variable with sub-Gaussian norms  $C_5\sigma$, $\frac{1}{\sqrt{\ln 2}}$ and $\frac{M}{\sqrt{\ln 2}}$, respectively, where $C_5$ is an absolute constant. With this, we present the first assumption of our study.
\begin{assumption}\label{Assumption:data}
Let $D=\{(x_i,y_i)\}_{i=1}^{|\Lambda|}$ be the set of data satisfying \eqref{Model1:fixed}   with $f^*\in\mathcal N_\varphi$ for $\varphi$ and $\phi$ satisfying \eqref{kernel-relation} and  $\{\varepsilon_i\}_{i=1}^{|\Lambda|}$  being a set of mean-zero sub-Gaussian  random variables for sub-Gaussian norm $M>0$.
\end{assumption}

Assumption \ref{Assumption:data} describes the regularity of  target functions  via $f^*\in\mathcal N_\varphi$ and the property of noise, i.e., mean-zero sub-Gaussian noise. As discussed above, there are quite mild in the community of spherical fitting \cite{narcowich2002scattered,narcowich1998stability,hesse2017radial,Feng2021radial,lin2017distributed,liu2024weighted}.
The following proposition presents a tight bound for $P_{D,\lambda,W}$.

\begin{proposition}\label{Proposition:value-difference-deterministic}
 Let $0<\delta<1$, $\hat{\phi}_k\stackrel{d}\sim k^{-2\gamma}$ with $\gamma>d/2$ and $\phi$, $\psi$, $\varphi$ satisfy \eqref{kernel-relation} for $0\leq \beta\leq 1$, $\alpha\geq \beta$. If Assumption \ref{Assumption:data} holds, 
 then   with confidence $1-\delta$,  
\begin{eqnarray}\label{bound-P-random}
     \mathcal P_{D,\lambda,W,\beta} 
     \leq     
     C_6\lambda^{-d/(4\gamma)} \left(\sum_{i=1}^{|\Lambda|}w_i^2+w_{\max}^2\right)^{1/2} \log\frac6\delta , 
\end{eqnarray} 
where 
 $w_{\max}=\max_{i=1,\dots,|\Lambda|}w_i$ and $ C_6$ is a  constant    depending only on $d$, $\gamma, M$.
\end{proposition}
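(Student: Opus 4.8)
The plan is to reduce $\mathcal P_{D,\lambda,W,\beta}$ to the norm of a sum of independent, mean-zero random vectors in $\mathcal N_\phi$ and then to invoke a Bernstein-type concentration inequality for Hilbert-space-valued variables; the only sphere-specific input will be a uniform (i.e.\ $x$-free) bound for $\|(L_\phi+\lambda I)^{-1/2}\phi_x\|_\phi$, which is the square root of the effective dimension of $\phi$ at level $\lambda$. First I would rewrite the argument of the norm: combining \eqref{sampling-conjugate}, \eqref{weight-empirical-operaotr-1} and the model \eqref{Model1:fixed} gives
\[
S^T_{\Lambda,W}\mathbf y_{\Lambda,W}-\mathcal L_{\phi,\psi,\Lambda,W}f^*=\sum_{i=1}^{|\Lambda|}w_i\bigl(y_i-f^*(x_i)\bigr)\phi_{x_i}=\sum_{i=1}^{|\Lambda|}w_i\varepsilon_i\phi_{x_i},
\]
so that $\mathcal P_{D,\lambda,W,\beta}=\bigl\|\sum_{i=1}^{|\Lambda|}\xi_i\bigr\|_\phi$ with $\xi_i:=w_i\varepsilon_i(L_\phi+\lambda I)^{-1/2}\phi_{x_i}\in\mathcal N_\phi$. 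Since the $x_i$ are deterministic and, by Assumption \ref{Assumption:data}, the $\varepsilon_i$ are independent with mean zero, the $\xi_i$ are independent with $\mathbb E\xi_i=0$; note that $\mathcal P_{D,\lambda,W,\beta}$ does not in fact depend on $\alpha,\beta,\psi$ or $\varphi$, which is why the final bound does not either.

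Next I would bound $\|(L_\phi+\lambda I)^{-1/2}\phi_x\|_\phi$ uniformly in $x$. By \eqref{funkhecke} one has $L_\phi Y_{k,\ell}=\hat\phi_k Y_{k,\ell}$, so $(L_\phi+\lambda I)^{-1/2}$ acts on $\mathbb H_k^d$ as multiplication by $(\hat\phi_k+\lambda)^{-1/2}$; combining the expansion \eqref{kernel-basis-relation} of $\phi_x$, the orthogonality relation $\langle Y_{k,\ell},Y_{k',\ell'}\rangle_\phi=\hat\phi_k^{-1}\delta_{kk'}\delta_{\ell\ell'}$, and the addition formula \eqref{addition-formula} at $x'=x$,
\[
\|(L_\phi+\lambda I)^{-1/2}\phi_x\|_\phi^2=\frac1{\Omega_d}\sum_{k=0}^\infty\frac{\hat\phi_k\,Z(d,k)}{\hat\phi_k+\lambda},
\]
which is independent of $x$. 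With $\hat\phi_k\stackrel{d}\sim k^{-2\gamma}$, $Z(d,k)\stackrel{d}\sim k^{d-1}$ and $2\gamma>d$, splitting this series at $k_0:=\lceil\lambda^{-1/(2\gamma)}\rceil$ bounds the terms with $k\le k_0$ by $C'\sum_{k=1}^{k_0}k^{d-1}\le C''\lambda^{-d/(2\gamma)}$ and the terms with $k>k_0$ by $C'\lambda^{-1}\sum_{k>k_0}k^{d-1-2\gamma}\le C''\lambda^{-d/(2\gamma)}$ (using $d-1-2\gamma<-1$), hence $\|(L_\phi+\lambda I)^{-1/2}\phi_x\|_\phi\le C\lambda^{-d/(4\gamma)}$ for all $x\in\mathbb S^d$ and $\lambda\in(0,1)$, with $C=C(d,\gamma)$.

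It then follows that $\|\xi_i\|_\phi\le C\lambda^{-d/(4\gamma)}w_i|\varepsilon_i|$, and the sub-Gaussian moment bounds \eqref{moment-for-squre-sub-gau-1}--\eqref{moment-for-squre-sub-gau-2} supply a constant $c=c(M)$ with $\mathbb E|\varepsilon_i|^m\le m!\,c^m$ for every integer $m\ge2$. Using $\sum_i w_i^m\le w_{\max}^{m-2}\sum_i w_i^2$ one verifies the moment (Bernstein) condition $\sum_{i=1}^{|\Lambda|}\mathbb E\|\xi_i\|_\phi^m\le\tfrac{m!}{2}\,\widetilde B^{\,m-2}\widetilde V$ for all $m\ge2$, with $\widetilde B$ proportional to $\lambda^{-d/(4\gamma)}w_{\max}$ and $\widetilde V$ proportional to $\lambda^{-d/(2\gamma)}\sum_i w_i^2$ (constants depending only on $d,\gamma,M$). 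Applying the standard Bernstein inequality for sums of independent Hilbert-space-valued random variables (see, e.g., \cite{caponnetto2007optimal,liu2024weighted}) then yields, with confidence $1-\delta$, a bound of the form $2\widetilde B\log(2/\delta)+\sqrt{2\widetilde V\log(2/\delta)}$, which after $\sqrt{\log(2/\delta)}\le\log(6/\delta)$ and $a+b\le\sqrt2\,(a^2+b^2)^{1/2}$ becomes exactly \eqref{bound-P-random}.

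The hard part will be the Bernstein step: because the noise is only sub-Gaussian (hence unbounded) and the weights $w_i$ are non-uniform, one must use the moment version of the vector Bernstein inequality rather than the bounded-range one and track carefully how the variance proxy $\sum_i w_i^2$ and the ``single-term'' proxy $w_{\max}^2$ enter, so that the resulting constant $C_6$ depends only on $d,\gamma,M$. (The $w_{\max}^2$ term is in fact dominated by $\sum_i w_i^2$, but it records the origin of the $\widetilde B\log(1/\delta)$ contribution.) The effective-dimension estimate of the second step is the other quantitative ingredient, but it is routine once $2\gamma>d$ is used.
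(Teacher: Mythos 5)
Your proof is correct, but it follows a genuinely different route from the paper's. The paper does not linearize the problem: it squares $\mathcal P_{D,\lambda,W,\beta}$, expands it as the quadratic form $\sum_{i,i'}w_iw_{i'}\varepsilon_i\varepsilon_{i'}\langle(L_\phi+\lambda I)^{-1}\phi_{x_i},\phi_{x_{i'}}\rangle_\phi$, and splits it into a diagonal part (handled by a Chernoff bound for the sub-exponential variables $\varepsilon_i^2$ via \eqref{moment-for-squre-sub-gau-1}) and an off-diagonal part (handled by the scalar Bernstein inequality of Lemma \ref{Lemma:Bernstei-inequaltiy-random} applied to the mean-zero sub-exponential products $\varepsilon_i\varepsilon_{i'}$ from Lemma \ref{Lemma:sub-Gaussian-product}, with coefficients bounded by $|P_k^{d+1}|\le1$ and Lemma \ref{Lemma:effective-dimension}). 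You instead keep the quantity linear in the noise, viewing $(L_\phi+\lambda I)^{-1/2}\sum_iw_i\varepsilon_i\phi_{x_i}$ as a sum of independent mean-zero vectors in $\mathcal N_\phi$ and applying the Pinelis--Sakhanenko vector Bernstein inequality under the moment condition; the sphere-specific input, the uniform bound $\|(L_\phi+\lambda I)^{-1/2}\phi_x\|_\phi\le C\lambda^{-d/(4\gamma)}$, is exactly the paper's Lemma \ref{Lemma:effective-dimension}, so both proofs share the effective-dimension estimate. Your route buys a cleaner argument: it needs only independence of the $\varepsilon_i$ themselves, whereas the paper's off-diagonal step applies a Bernstein inequality stated for independent summands to the family $\{\varepsilon_i\varepsilon_{i'}\}_{i\ne i'}$, which is not independent and strictly speaking calls for a decoupling or Hanson--Wright type argument. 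The paper's quadratic expansion could in principle exploit cancellation in $P_k^{d+1}(x_i\cdot x_{i'})$ for a sharper variance, but since it only uses $|P_k^{d+1}|\le1$, nothing is lost by your approach, and both yield \eqref{bound-P-random} with the same structure $\bigl(\sum_iw_i^2+w_{\max}^2\bigr)^{1/2}$, the $w_{\max}$ term recording the Bernstein "range" contribution in each case.
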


To derive bounds of $\mathcal R_{\Lambda,\lambda,W,u,v}$ and $\mathcal S_{\Lambda,\lambda,W,u}$,  define the discrepancy   of quadrature  by 
\begin{eqnarray}
{WCE}^{\varphi,\phi}_{\Lambda,\lambda,W,u}&:=&\sup_{\|f\|_{\varphi}\leq 1}\sup_{\|g\|_\phi\leq 1}
  \left|\int_{\mathbb S^d} f(x')\eta_{\lambda,u}(  L_\phi)g(x')d\omega(x')\right. \nonumber\\
  &-&
\left.\sum_{i=1}^{|\Lambda|}w_{i}f(x_i)\eta_{\lambda,u}(L_\phi)g(x_i)\right|   \label{def.WCE1}\\
{WCE}^{\phi,\phi}_{\Lambda,\lambda,W,u,v}&:=&\sup_{\|f\|_{\phi}\leq 1}\sup_{\|g\|_\phi\leq 1}
  \left|\int_{\mathbb S^d} \eta_{\lambda,v}(    L_{\phi})f(x')\eta_{\lambda,u}(  L_\phi)g(x')d\omega(x')\right. \nonumber\\
  &-&
\left.\sum_{i=1}^{|\Lambda|}w_{i}\eta_{\lambda,v}(  L_{\phi})f(x_i)\eta_{\lambda,u}(L_\phi)g(x_i)\right|  \label{def.WCE2}
\end{eqnarray}
for  $\lambda>0$, $0\leq u,v\leq 1$ and $\varphi,\psi$ satisfying \eqref{kernel-relation} with $\alpha\geq\beta\geq0$,  
where  $\eta_{\lambda,u}(A)f=(A+\lambda I)^{-u}f$  for positive operator $A$. 
The following proposition presents the relation between operator differences and  the above two quantities.

\begin{proposition}\label{prop:equivalance-11}
For any    $ w_i>0$, $\lambda>0$ and $0\leq u,v\leq 1$, there holds
\begin{eqnarray}
    \mathcal R_{\Lambda,\lambda,W,u,v} &=&{WCE}^{\phi,\phi}_{\Lambda,\lambda,W,u,v},\label{equivalance-1}\\
    \mathcal S_{\Lambda,\lambda,W,u} &=&{WCE}^{\varphi,\phi}_{\Lambda,\lambda,W,u}.\label{equivalance-2}
\end{eqnarray}
\end{proposition}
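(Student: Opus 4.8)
The plan is to prove both identities by the same two-step template, since each is in the end just an algebraic identification of a bilinear form. First I would realize each operator norm as a supremum of the associated bilinear form over the product of unit balls of the relevant Hilbert spaces: for a bounded operator $T\colon H_1\to H_2$ one has $\|T\|_{H_1\to H_2}=\sup_{\|f\|_{H_1}\le1}\sup_{\|g\|_{H_2}\le1}|\langle Tf,g\rangle_{H_2}|$. Second, I would use that $(L_\phi+\lambda I)^{-u}$ and $(L_\phi+\lambda I)^{-v}$, defined by spectral calculus from the positive self-adjoint compact operator $L_\phi$ (with $\lambda>0$, $0\le u,v\le1$), are bounded and self-adjoint on $\mathcal N_\phi$, so that the outer factor $(L_\phi+\lambda I)^{-u}$ can be moved from the left argument of $\langle\cdot,\cdot\rangle_\phi$ onto the right one, leaving the remaining resolvent power acting on $f$. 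What then remains is to recognize the resulting bilinear forms of $L_\phi-L_{\phi,\Lambda,W}$ and of $\mathcal L_{\phi,\Lambda,W}-J_{\phi,\psi}^{T}$ as the ``integral minus weighted sum'' expressions appearing in \eqref{def.WCE2} and \eqref{def.WCE1}.

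For \eqref{equivalance-1}, writing $T:=(L_\phi+\lambda I)^{-u}(L_\phi-L_{\phi,\Lambda,W})(L_\phi+\lambda I)^{-v}$, the two steps give
\[
\mathcal R_{\Lambda,\lambda,W,u,v}=\sup_{\|f\|_\phi\le1}\sup_{\|g\|_\phi\le1}\Bigl|\bigl\langle(L_\phi-L_{\phi,\Lambda,W})\,\eta_{\lambda,v}(L_\phi)f,\ \eta_{\lambda,u}(L_\phi)g\bigr\rangle_\phi\Bigr|.
\]
Here I would invoke the two bilinear-form identities valid for all $F,G\in\mathcal N_\phi$: from $L_\phi=J_\phi^{T}J_\phi$ with $J_\phi$ the canonical inclusion into $L^2(\mathbb S^d)$ one gets $\langle L_\phi F,G\rangle_\phi=\langle J_\phi F,J_\phi G\rangle_{L^2}=\int_{\mathbb S^d}F(x')G(x')\,d\omega(x')$, and from $L_{\phi,\Lambda,W}=S_{\Lambda,W}^{T}S_{\Lambda,W}$ (see \eqref{weight-empirical-operaotr}) together with \eqref{Sampling-operator} one gets $\langle L_{\phi,\Lambda,W}F,G\rangle_\phi=\langle S_{\Lambda,W}F,S_{\Lambda,W}G\rangle_{\mathbb R^{|\Lambda|}}=\sum_{i=1}^{|\Lambda|}w_iF(x_i)G(x_i)$. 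Applying these with $F=\eta_{\lambda,v}(L_\phi)f$ and $G=\eta_{\lambda,u}(L_\phi)g$ turns the quantity inside the absolute value into exactly the bracketed expression of \eqref{def.WCE2}; since the constraints $\|f\|_\phi\le1$, $\|g\|_\phi\le1$ are precisely those in \eqref{def.WCE2} (the $\eta$-factors sitting inside), this proves \eqref{equivalance-1}.

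For \eqref{equivalance-2} I would run the same argument with a single resolvent factor. After moving $(L_\phi+\lambda I)^{-u}$ onto $g$, the bilinear form becomes $\langle(\mathcal L_{\phi,\Lambda,W}-J_{\phi,\psi}^{T})f,\ \eta_{\lambda,u}(L_\phi)g\rangle_\phi$, and here I would use $\langle J_\phi^{T}f,G\rangle_\phi=\langle f,J_\phi G\rangle_{L^2}=\int_{\mathbb S^d}f(x')G(x')\,d\omega(x')$ (so that, tested against $\mathcal N_\phi$-valued $G$, $J_\phi^{T}$ produces the same integral as $L_\phi$) together with $\langle\mathcal L_{\phi,\Lambda,W}f,G\rangle_\phi=\sum_{i=1}^{|\Lambda|}w_if(x_i)\langle\phi_{x_i},G\rangle_\phi=\sum_{i=1}^{|\Lambda|}w_if(x_i)G(x_i)$, the last equality by the reproducing property \eqref{Reproducing-property} and the definition \eqref{weight-empirical-operaotr-1}. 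With $G=\eta_{\lambda,u}(L_\phi)g$ this is precisely the bracketed expression of \eqref{def.WCE1}, and taking the supremum over the unit ball of the source space and over $\|g\|_\phi\le1$ yields \eqref{equivalance-2}.

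I expect the only genuinely delicate point to be the choice of source space for $\mathcal S_{\Lambda,\lambda,W,u}$: since $\mathcal L_{\phi,\Lambda,W}f$ involves the point evaluations $f(x_i)$, the supremum must be taken over a space embedding continuously into $C(\mathbb S^d)$, which under the standing assumptions on $\varphi$ is $\mathcal N_\varphi$ — exactly the space over which the supremum in \eqref{def.WCE1} is taken — and on $\mathcal N_\varphi\subset L^2(\mathbb S^d)$ the operator $J_{\phi,\psi}^{T}$ (here with $\beta=0$, consistent with the $L^2$-to-$\mathcal N_\phi$ norm in \eqref{def:S}) acts as the $L^2$-adjoint of the inclusion, as used above. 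The other bookkeeping point is that the supremum in each operator norm is over the original test functions $f,g$, not over their images under the (boundedly invertible) self-adjoint maps $\eta_{\lambda,u}(L_\phi)$, $\eta_{\lambda,v}(L_\phi)$; since I only ever move these factors between the two slots of the bilinear form and never change the source space, the constraint sets on the two sides of each identity coincide, so both are exact equalities rather than inequalities. Beyond the elementary spectral-calculus facts for $L_\phi$ there is no real analysis here — the content is entirely the identification of bilinear forms via \eqref{Reproducing-property}, \eqref{Sampling-operator}, \eqref{weight-empirical-operaotr} and \eqref{weight-empirical-operaotr-1}, so the main thing to be careful about is which Hilbert space and which inner product sits in each slot.
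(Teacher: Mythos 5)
Your proposal is correct and follows essentially the same route as the paper: realize each operator norm as a supremum of a bilinear form, move the self-adjoint resolvent powers $\eta_{\lambda,u}(L_\phi)$, $\eta_{\lambda,v}(L_\phi)$ onto the test function, and identify $\langle L_\phi F,G\rangle_\phi$ with $\int F G\,d\omega$ and $\langle L_{\phi,\Lambda,W}F,G\rangle_\phi$ (resp.\ $\langle\mathcal L_{\phi,\Lambda,W}f,G\rangle_\phi$) with the weighted point sums via \eqref{Reproducing-property}. The only difference is that the paper writes out this computation only for \eqref{equivalance-2} and cites \cite{lin2023dis} for \eqref{equivalance-1}, whereas you supply the (identical in spirit) argument for both.
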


\subsection{Sampling and weighting schemes}
Proposition \ref{prop:equivalance-11} and Proposition \ref{Proposition:value-difference-deterministic} present two requirements for the sampling and weighting mechanisms to guarantee small differences between integral operators (functional) and their empirical counterparts, i.e., tight bounds for quadrature rules of products of functions and small $w_{\max}$. In this subsection, we study the sampling and weighting schemes for this purpose, though their existence has bee well studied in the literature \cite{dai2016riesz,buhmann2021discretization,Feng2021radial}. To present a detailed construction, we need the following definition of quadrature rules.

\begin{definition}\label{Def:quadrature-rule}
For   $\Lambda=\{x_i\}_{i=1}^{|\Lambda|}\subseteq\mathbb S^d$, $s\in\mathbb N$ and $\{w_{i,s}\}_{i=1}^{|\Lambda|}\subset\mathbb R_+^{|\Lambda|}$, if 
\begin{equation}\label{eq:quadrature}
    \int_{\mathbb S^d}P(x)d\omega(x)=\sum_{x_{i}\in\Lambda} w_{i,s} P(  x_{i}), \qquad \forall P\in \mathcal P_s^d.
\end{equation}
then 
 $\mathcal Q_{\Lambda,s}:=\{(w_{i,s},  x_{i}): w_{i,s}> 0
\hbox{~and~}   x_{i}\in \Lambda\}$ is said to be a 
 spherical   positive
 quadrature rule  of degree $s \in\mathbb N$. If in addition the weights $\{w_{i,s}\}_{i=1}^{|\Lambda|}$ satisfy $0< w_{i,s}\leq c_*{|\Lambda|^{-1}}$, then   $\mathcal Q_{\Lambda,s}$ is said to be a  D-type quadrature rule of degree $s$ and   coefficient $c_*>0$. If $ w_{i,s}=\frac1{|\Lambda|}$ for any $i=1,\dots,|\Lambda|$, $\mathcal Q_{\Lambda,s}$ is said to a spherical $s$-design. A spherical $s$-design is said to be $c_0$-tight if $|\Lambda|=c_0s^d$, where $c_0$ is a positive constant  depending only on $d$  
\end{definition}

The constructions of quadrature weights for scattered data, D-type quadrature rules for quasi-uniform sets and spherical $s$-design can be found in \cite{mhaskar2001spherical,le2008localized}, \cite{brown2005approximation} and \cite{bannai2009survey,womersley2018efficient}, respectively. The existence of $c_0$-tight spherical $s$-design was verified in \cite{bondarenko2013optimal}. 
The following lemma, derived in \cite{mhaskar2001spherical,brown2005approximation} presents the existence of quadrature rules.

\begin{lemma}\label{Lemma:fixed cubature-poly}
If there exists a constant $\tilde{c}$ such that $h_\Lambda\leq \tilde{c}$ and $s\leq \tilde{c}'h_\Lambda^{-1}$ for some $\tilde{c}'>0$, then \eqref{eq:quadrature} holds with $\sum_{i=1}^{|\Lambda|}w_{i,s}^2\leq |\Lambda|^{-1}$. If in addition $\Lambda$ is $\tau$-quasi-uniform for some $\tau\geq 1$ and $s\leq c_\diamond |\Lambda|^{1/d}$, then \eqref{eq:quadrature} holds with $0<w_i\leq c_*|\Lambda|^{-1}$, where $c_*,c_\diamond$ are constants  depending only on $\tau$ and $d$.
\end{lemma}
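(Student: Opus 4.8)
The plan is to follow the positive-quadrature machinery of Mhaskar--Narcowich--Ward and Brown--Dai, whose engine is a Marcinkiewicz--Zygmund (MZ) inequality for $\mathcal P_s^d$ that is valid precisely when $s\,h_\Lambda$ is below a dimensional constant. First I would fix a Voronoi partition $\{V_i\}_{i=1}^{|\Lambda|}$ of $\mathbb S^d$ with $x_i\in V_i$ and $\mathrm{diam}(V_i)\le 2h_\Lambda$, and put $\mu_i:=\omega(V_i)$, so $\sum_i\mu_i=1$. For $P\in\mathcal P_s^d$ and $x\in V_i$ one estimates $|P(x)-P(x_i)|$ by $\mathrm{dist}(x,x_i)$ times the size of $\nabla P$ near $x_i$; the Bernstein/Videnskii-type gradient bound for spherical polynomials makes this $\lesssim s\,h_\Lambda\,\|P\|_{L^\infty(\mathrm{cap})}$, and a Nikolskii/Remez estimate turns the local $L^\infty$ norm into the local $L^1$ mass on a slightly enlarged cell. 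Summing over $i$ gives
\[
  \Bigl|\int_{\mathbb S^d}P\,d\omega-\sum_{i=1}^{|\Lambda|}\mu_iP(x_i)\Bigr|\le C(d)\,s\,h_\Lambda\,\|P\|_{L^1(\mathbb S^d)},\qquad P\in\mathcal P_s^d ,
\]
so once $s\,h_\Lambda\le\tilde c'=\tilde c'(d)$ the rule $\{(\mu_i,x_i)\}$ is approximately exact and $\{\mathrm{ev}_{x_i}\}_i$ norms $\mathcal P_s^d$.

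Second, I would promote this approximate rule to an exact positive one. Exactness in the sense of \eqref{eq:quadrature} amounts to $\dim\mathcal P_s^d\sim s^d$ linear constraints on $w\in\mathbb R^{|\Lambda|}$, and the MZ inequality makes the associated moment map well-conditioned. There are two standard routes: (i) a separating-hyperplane (Farkas) argument placing the positive functional $P\mapsto\int_{\mathbb S^d}P\,d\omega$ in the cone generated by $\{\mathrm{ev}_{x_i}\}_i$ inside $(\mathcal P_s^d)^*$, with the candidate obstruction polynomial excluded by the MZ bound; or (ii) a Neumann-series correction of the initial weights $\mu_i$, whose error operator on $\mathcal P_s^d$ has norm $\le C(d)\,s\,h_\Lambda<1$, producing weights with $|w_{i,s}-\mu_i|\le\tfrac12\mu_i$, hence $0<w_{i,s}\le\tfrac32\mu_i\le C(d)\,h_\Lambda^d$. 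Testing on $P\equiv1$ gives $\sum_i w_{i,s}=1$, so $\sum_i w_{i,s}^2\le(\max_i w_{i,s})\sum_i w_{i,s}\le C(d)\,h_\Lambda^d$, which is of order $|\Lambda|^{-1}$ in the intended regime where the nodes are equidistributed at scale $h_\Lambda$; this is the first claim.

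For the D-type refinement in the sense of Definition \ref{Def:quadrature-rule}, $\tau$-quasi-uniformity forces $h_\Lambda\sim q_\Lambda\sim|\Lambda|^{-1/d}$, so the hypothesis $s\le c_\diamond|\Lambda|^{1/d}$ with a small $c_\diamond=c_\diamond(\tau,d)$ is equivalent to $s\le\tilde c'h_\Lambda^{-1}$ and reduces to the previous case; moreover every Voronoi cell of such a set satisfies $c^{-1}|\Lambda|^{-1}\le\mu_i\le c\,|\Lambda|^{-1}$ with $c=c(\tau,d)$, whence the constructed weights obey $0<w_{i,s}\le\tfrac32\mu_i\le c_*|\Lambda|^{-1}$. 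I expect step two to be the main obstacle: least-squares or minimal-norm quadrature weights need not be nonnegative, so extracting \emph{positive} weights of the stated size — and tracking that every constant depends only on $d$ (and on $\tau$ in the quasi-uniform case) — is where the argument has content; the MZ inequality of step one is exactly the input that makes the duality or perturbation argument close, and keeping the $s\,h_\Lambda$-threshold dimensional forces one to pin down the Bernstein and Nikolskii constants.
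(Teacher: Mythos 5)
Your route --- Voronoi cells $\{V_i\}$ with $\mu_i=\omega(V_i)$, a Marcinkiewicz--Zygmund inequality for $\mathcal P_s^d$ obtained from Bernstein and Nikolskii estimates under the threshold $s\,h_\Lambda\le \tilde c'$, and then a duality (Farkas) or weighted least-squares perturbation of the $\mu_i$ to produce \emph{exact, positive} weights with $w_{i,s}\le\tfrac32\mu_i$ --- is precisely the construction in the references the paper cites for this lemma (Mhaskar--Narcowich--Ward and Brown--Dai); the paper itself offers no proof beyond the citation, so there is no in-text argument to diverge from. Your treatment of the D-type half is sound: $\tau$-quasi-uniformity gives $q_\Lambda\stackrel{d}\sim h_\Lambda\stackrel{d}\sim|\Lambda|^{-1/d}$ and $\mu_i\le c(\tau,d)|\Lambda|^{-1}$, hence $0<w_{i,s}\le c_*|\Lambda|^{-1}$ with $c_*$ depending only on $\tau$ and $d$. (One caveat on route (ii): an operator-norm bound for the error on $\mathcal P_s^d$ does not by itself give the pointwise bound $|w_{i,s}-\mu_i|\le\tfrac12\mu_i$; that requires carrying out the correction in the $\mu$-weighted $\ell^2$ norm, as in the cited works. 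You correctly flag this as the step with content.)

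The genuine gap is the endgame of the first claim. Your chain yields $\sum_i w_{i,s}^2\le(\max_i w_{i,s})\sum_i w_{i,s}\le C(d)\,h_\Lambda^d$, and you then declare this to be of order $|\Lambda|^{-1}$ ``in the intended regime where the nodes are equidistributed.'' But the first claim carries no equidistribution hypothesis, and for a general scattered set $h_\Lambda^d$ can exceed $|\Lambda|^{-1}$ by an arbitrary factor (cluster most of the nodes); indeed a covering argument gives $h_\Lambda^d\ge c(d)|\Lambda|^{-1}$ always, so your bound never lands below the target. More fundamentally, the inequality as stated in the lemma cannot hold for any positive rule except the equal-weight one: testing \eqref{eq:quadrature} on $P\equiv1$ gives $\sum_i w_{i,s}=\omega(\mathbb S^d)=1$ under the paper's normalization, so Cauchy--Schwarz forces $\sum_i w_{i,s}^2\ge|\Lambda|^{-1}$, with equality only when every $w_{i,s}=|\Lambda|^{-1}$. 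The first assertion therefore needs either an absolute constant in front of $|\Lambda|^{-1}$ together with a quasi-uniformity hypothesis, or a different normalization; as written, no construction --- yours included --- can reach it, and the hedge ``in the intended regime'' is exactly where your argument fails to close.
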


Furthermore, the following lemma \cite{bondarenko2013optimal} shows the existence of $c_0$-tight $s$-spherical design.

\begin{lemma}\label{Lemma:Spherica-d-1}
 There exists a constant $c_0>0$, depending only on $d$, such that for every $|\Lambda|\in\mathbb N$ and $s\geq 1$, there exists a $c_0$-tight $s$-spherical  design.
\end{lemma}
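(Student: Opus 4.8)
This lemma is precisely the theorem of Bondarenko, Radchenko and Viazovska \cite{bondarenko2013optimal} on the existence of spherical designs with an asymptotically optimal number of nodes, so I would not reprove it from scratch; I only sketch the strategy, a complete proof being well beyond the scope of this paper. The plan is to fix $s\geq1$, set $N:=|\Lambda|=\lceil c_0 s^d\rceil$ for a constant $c_0=c_0(d)$ to be chosen large, and produce points $x_1,\dots,x_N\in\mathbb S^d$ for which the equal-weight rule $P\mapsto\frac1N\sum_{i=1}^N P(x_i)$ agrees with $\int_{\mathbb S^d}P\,d\omega$ on $\mathcal{P}_s^{d}$. Writing $V:=\bigoplus_{k=1}^{s}\mathbb{H}^{d}_k$, which has dimension $m:=Z(d+1,s)-1\stackrel{d}{\sim} s^d$ and for which $\int_{\mathbb S^d}Y\,d\omega=0$ whenever $Y\in V$, this is equivalent to finding a zero of the quadrature-defect map
\[
   \Phi(x_1,\dots,x_N):=\Bigl(\tfrac1N\textstyle\sum_{i=1}^N Y(x_i)\Bigr)_{Y\in\mathcal B}\in\mathbb R^{m},
\]
where $\mathcal B$ is an $L^2(\mathbb S^d)$-orthonormal basis of $V$. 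If one only wanted existence of \emph{some} $s$-design, the classical Seymour--Zaslavsky averaging argument would already suffice; it is the optimal order $N\stackrel{d}{\sim} s^d$ that makes \cite{bondarenko2013optimal} delicate.

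First I would invoke the existence of an area-regular partition $\mathbb S^d=\bigcup_{i=1}^N R_i$ into measurable cells with $\omega(R_i)=1/N$ and $\mathrm{diam}(R_i)\lesssim N^{-1/d}\lesssim c_0^{-1/d}s^{-1}$, together with centres $z_i\in R_i$ (recursive equal-area partitions supply this). For the centre configuration, a mean-value estimate combined with the Bernstein inequality $\|\nabla Y\|_\infty\lesssim s\|Y\|_\infty$ on $V$ gives, in the natural normalisation, $|\Phi(z_1,\dots,z_N)|\lesssim c_0^{-1/d}$, which shrinks as $c_0$ grows. The remaining step is a topological one: one shows that $\Phi$, restricted to $\prod_{i=1}^N\overline{R_i}$ (equivalently to small geodesic balls about the $z_i$) and suitably normalised, is a continuous deformation of a map of nonzero Brouwer degree, so that $\Phi$ must attain the value $0$ in the interior; the non-degeneracy of $\Phi$ on the boundary of the product is exactly what the smallness of the cells relative to $1/s$ provides.

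The hard part will be this last step: one must control $\Phi$ uniformly over an $N$-fold product with $N\stackrel{d}{\sim} s^d$ while simultaneously tracking the $m\stackrel{d}{\sim} s^d$ output coordinates, and verify the nonvanishing of the relevant degree — this is the technical heart of \cite{bondarenko2013optimal}. The auxiliary ingredients (the equal-area partition, the Bernstein bound on $V$, and the reduction of the $s$-design property to a zero of $\Phi$) are routine by comparison. Accordingly I would, as the authors do here, simply cite \cite{bondarenko2013optimal} for the statement, recording only that the resulting constant $c_0$ depends on $d$ alone, which is all that is used in the sequel.
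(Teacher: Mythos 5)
Your proposal matches the paper exactly: the paper offers no proof of this lemma and simply cites \cite{bondarenko2013optimal} for the existence of $c_0$-tight spherical $s$-designs, which is precisely what you do, and your accompanying sketch of the Bondarenko--Radchenko--Viazovska argument (area-regular partition plus a Brouwer-degree argument) is a faithful outline of that reference. Nothing further is needed.
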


Before presenting sampling and weighting mechanisms, we derive upper  bounds of ${WCE}^{\varphi,\phi}_{\Lambda,\lambda,W_s,u_1,u_2}$ and ${WCE}^{\phi,\phi}_{\Lambda,\lambda,W_s,u_1,u_2}$, where $W_s$ is the diagonal matrix  with diagonal elements being the quadrature weights in Lemma \ref{Lemma:fixed cubature-poly}, i.e., $\{w_{1,s},\dots,w_{|\Lambda|,s}\}.$

\begin{proposition}\label{proposition:quadrature-for-convolution}
Let $u,v\in[0,1/2]$,  $\hat{\phi}_k\stackrel{d}\sim k^{-2\gamma}$ with $\gamma>d/2$, $\phi, \varphi$ be SBFs satisfying \eqref{kernel-relation} with  $\alpha\gamma>d/2$.  If $\mathcal Q_{\Lambda,s}:=\{(w_{i,s},  x_{i}): w_{i,s}> 0
\hbox{~and~}   x_{i}\in \Lambda\}$ is a quadrature rule of degree $s$, then
   for any $\lambda>0$,  there holds
\begin{eqnarray}
 {WCE}^{\phi,\phi}_{\Lambda,\lambda,W,u,v}
 &\leq& C_7(1+q_\Lambda^{-1}s^{-1})^d\lambda^{-\max\{u,v\}}s^{-\gamma},\qquad \label{product-quadruare-111}\\
{WCE}_{\Lambda,\lambda,W,u}^{\varphi,\phi}
   &\leq &
 C_7(1+q_\Lambda^{-1}s^{-1})^d(\lambda^{-u}s^{-\gamma}+ s^{-\alpha\gamma}
   ),  \label{product-quadruare-222}
\end{eqnarray}
where $C_7$  is a constant  depending only on $\gamma, \alpha,u,v$ and $d$.  
\end{proposition}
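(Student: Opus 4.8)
The plan is to reduce both quantities to the cubature error of a single product of functions and then use exactness of $\mathcal Q_{\Lambda,s}$ together with $L^2$-orthogonality. Fix $f,g$ in the relevant unit balls and set, for \eqref{product-quadruare-111}, $F=\eta_{\lambda,v}(L_\phi)f$ and $G=\eta_{\lambda,u}(L_\phi)g$ with $\|f\|_\phi,\|g\|_\phi\le1$; for \eqref{product-quadruare-222}, $F=f$ with $\|f\|_\varphi\le1$ and $G=\eta_{\lambda,u}(L_\phi)g$ with $\|g\|_\phi\le1$. Both $F$ and $G$ are continuous because $\hat{\phi}_k\stackrel{d}{\sim}k^{-2\gamma}$ with $\gamma>d/2$ (and $\alpha\gamma>d/2$) forces $\mathcal N_\phi,\mathcal N_\varphi\hookrightarrow C(\mathbb{S}^d)$, so both $WCE$ equal $\bigl|\int_{\mathbb{S}^d}FG\,d\omega-\sum_i w_{i,s}F(x_i)G(x_i)\bigr|$. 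Write $P_n$ for truncation of the spherical-harmonic expansion at degree $n:=\lfloor s/2\rfloor$ and $R_n:=\id-P_n$, and split $F=P_nF+R_nF$, $G=P_nG+R_nG$. Since $(P_nF)(P_nG)\in\mathcal P_s^d$, exactness of $\mathcal Q_{\Lambda,s}$ annihilates its cubature error, and since harmonics of different degrees are $L^2$-orthogonal, $\int(P_nF)(R_nG)\,d\omega=\int(R_nF)(P_nG)\,d\omega=0$. Hence the cubature error collapses to
\[
\langle R_nF,R_nG\rangle_{L^2}-\sum_i w_{i,s}(R_nF)(x_i)(P_nG)(x_i)-\sum_i w_{i,s}(P_nF)(x_i)(R_nG)(x_i)-\sum_i w_{i,s}(R_nF)(x_i)(R_nG)(x_i).
\]

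Next I would estimate the four pieces. For the tails, $\hat{\phi}_k\stackrel{d}{\sim}k^{-2\gamma}$ together with the symbol bounds $(\hat{\phi}_k+\lambda)^{-v}\le\lambda^{-v}$ and $(\hat{\phi}_k+\lambda)^{-v}\hat{\phi}_k^{1/2}\le\hat{\phi}_k^{1/2-v}$ (valid for $v\le1/2$) give $\|R_nF\|_{L^2}\lesssim\lambda^{-v}s^{-\gamma}$, $\|R_nG\|_{L^2}\lesssim\lambda^{-u}s^{-\gamma}$, and $\|F\|_{L^2},\|G\|_{L^2}\lesssim1$; in the $(\varphi,\phi)$ case also $\|R_nf\|_{L^2}\lesssim s^{-\alpha\gamma}$ from $\hat{\varphi}_k=\hat{\phi}_k^\alpha\stackrel{d}{\sim}k^{-2\alpha\gamma}$. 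For the discrete sums, the polynomial parts are controlled again by exactness, $\sum_i w_{i,s}(P_nF)(x_i)^2=\|P_nF\|_{L^2}^2\le\|F\|_{L^2}^2$ and similarly for $G$, while the tail sums are bounded by Cauchy--Schwarz against the weights (which sum to $1$) combined with Marcinkiewicz--Zygmund/Nikolskii-type inequalities for $\mathcal Q_{\Lambda,s}$: decomposing $R_nF$ into dyadic degree blocks, the blocks of degree $\lesssim q_\Lambda^{-1}$ obey $\sum_i w_{i,s}(\cdot)(x_i)^2\lesssim(1+q_\Lambda^{-1}s^{-1})^d\|\cdot\|_{L^2}^2$, and the higher blocks are controlled via the reproducing-kernel identity $\sup_x\|R_m\phi_x\|_\phi^2=\sum_{k>m}\hat{\phi}_k Z(d,k)/\Omega_d\stackrel{d}{\sim}m^{d-2\gamma}$; summing the geometrically decaying series (convergent since $2\gamma>d$) yields $\sum_i w_{i,s}(R_nF)(x_i)^2\lesssim(1+q_\Lambda^{-1}s^{-1})^d\lambda^{-2v}s^{-2\gamma}$, and analogously for $G$ and, with $s^{-2\alpha\gamma}$, for $f$.

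Assembling, the two mixed discrete sums dominate. By Cauchy--Schwarz against the weights, exactness ($\sum_i w_{i,s}(P_nG)(x_i)^2=\|P_nG\|_{L^2}^2$), and the tail bounds above, $\sum_i w_{i,s}(R_nF)(x_i)(P_nG)(x_i)+\sum_i w_{i,s}(P_nF)(x_i)(R_nG)(x_i)\lesssim(1+q_\Lambda^{-1}s^{-1})^d\bigl(\|G\|_{L^2}\lambda^{-v}s^{-\gamma}+\|F\|_{L^2}\lambda^{-u}s^{-\gamma}\bigr)$, while the remaining pieces $\langle R_nF,R_nG\rangle_{L^2}$ and $\sum_i w_{i,s}(R_nF)(x_i)(R_nG)(x_i)$ are of strictly higher order. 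For \eqref{product-quadruare-111} both $\|F\|_{L^2},\|G\|_{L^2}\lesssim1$, giving $(1+q_\Lambda^{-1}s^{-1})^d\lambda^{-\max\{u,v\}}s^{-\gamma}$; for \eqref{product-quadruare-222} the $G$-tail contributes $\|F\|_{L^2}\lambda^{-u}s^{-\gamma}\lesssim\lambda^{-u}s^{-\gamma}$ and the $f$-tail contributes $\|G\|_{L^2}s^{-\alpha\gamma}\lesssim s^{-\alpha\gamma}$, which is the stated bound. Taking the supremum over admissible $f,g$ finishes the argument.

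The step I expect to be the main obstacle is the Marcinkiewicz--Zygmund control of $\sum_i w_{i,s}(R_nF)(x_i)^2$: the naive estimate $\sum_i w_{i,s}(R_nF)(x_i)^2\le\|R_nF\|_\infty^2$ only delivers the weaker order $s^{d/2-\gamma}$, so one genuinely has to establish that the sampled weighted $\ell^2$-norm of a high-frequency function is comparable to its $L^2$-norm on point sets of controlled geometry, and track precisely how this comparison degrades into the factor $(1+q_\Lambda^{-1}s^{-1})^d$ --- here the structure of $\mathcal Q_{\Lambda,s}$ from Lemma~\ref{Lemma:fixed cubature-poly}, in particular $\sum_i w_{i,s}^2\le|\Lambda|^{-1}$, is what makes this feasible. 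A secondary subtlety is ensuring that only $\lambda^{-\max\{u,v\}}$, rather than $\lambda^{-(u+v)}$, appears; this uses $u,v\le1/2$ to keep $\|F\|_{L^2},\|G\|_{L^2}$ bounded so that the $\lambda$-dependence is charged to only one of the two factors.
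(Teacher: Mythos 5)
Your overall architecture is close in spirit to the paper's treatment of the polynomial part of the error: split at degree $\lfloor s/2\rfloor$, kill the polynomial--polynomial product by exactness, kill two cross terms by $L^2$-orthogonality, and control what remains by Jackson-type tail decay plus a Marcinkiewicz--Zygmund bound for the discrete sums. The tail estimates $\|R_nF\|_{L^2}\lesssim\lambda^{-v}s^{-\gamma}$, $\|R_nf\|_{L^2}\lesssim s^{-\alpha\gamma}$ and the boundedness $\|F\|_{L^2},\|G\|_{L^2}\lesssim 1$ for $u,v\le 1/2$ are all correct. However, there is a genuine gap exactly at the point you flag as the main obstacle, and your proposed resolution does not close it. You truncate $F$ and $G$ themselves, so the discrete sums involve the genuinely infinite-bandwidth tails $R_nF(x_i)$, $R_nG(x_i)$; you then control the frequencies above $\sim q_\Lambda^{-1}$ by the pointwise reproducing-kernel bound $|R_mF(x)|\le\|F\|_\phi\,\bigl(\sum_{k>m}\hat{\phi}_kZ(d,k)/\Omega_d\bigr)^{1/2}\lesssim\lambda^{-v}m^{(d-2\gamma)/2}$. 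With $m\sim q_\Lambda^{-1}$ this gives $\sum_iw_{i,s}(R_mF)(x_i)^2\lesssim\lambda^{-2v}q_\Lambda^{2\gamma-d}$, and one checks that $q_\Lambda^{2\gamma-d}\le(1+q_\Lambda^{-1}s^{-1})^ds^{-2\gamma}$ forces $q_\Lambda^{-1}\gtrsim s^{1+d/(2\gamma)}$. In the standard quasi-uniform regime $s\sim q_\Lambda^{-1}$ this fails by a factor $s^{d}$ in the squared norm --- which is precisely the $s^{d/2}$ loss you yourself identified as fatal in the ``naive'' sup-norm estimate. So the high-frequency part of the discrete sum is not controlled by your argument.

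The paper avoids sampling any high-frequency function at all: by \cref{Lemma:polynomial-interpolation} (from Narcowich--Sun--Ward--Wendland) it replaces $f$ and $\eta_{\lambda,u}(L_\phi)g$ by \emph{interpolating} polynomials $P_f^*,P_g^*$ of degree $s^*\sim q_\Lambda^{-1}$ that agree with them at every $x_i$ and have controlled native-space and $L^2$ norms, so the discrete sums are unchanged while only polynomials of degree $\le s^*$ ever get sampled; the one-sided MZ inequality of \cref{Lemma:1}, valid for \emph{all} degrees $s'>s$ with constant $(s'/s)^d$, then applies with constant $(s^*/s)^d\sim(q_\Lambda^{-1}s^{-1})^d$, which is exactly where the prefactor $(1+q_\Lambda^{-1}s^{-1})^d$ comes from. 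If you want to keep your decomposition, the repair is to apply that same one-sided MZ inequality to \emph{every} dyadic block of $R_nF$ (each block is a polynomial, so the lemma applies with constant $(2^j/s)^d$), and observe that the blockwise $L^2$ decay $2^{-2j\gamma}$ beats the growth $2^{jd}$ because $\gamma>d/2$; but the reproducing-kernel sup-norm shortcut for the super-$q_\Lambda^{-1}$ frequencies must be abandoned. A secondary, more minor point: your claim that $\langle R_nF,R_nG\rangle_{L^2}$ is ``of strictly higher order'' is not true uniformly in $\lambda>0$ (for $\lambda\ll s^{-2\gamma}$ the product $\lambda^{-u-v}s^{-2\gamma}$ can exceed $\lambda^{-\max\{u,v\}}s^{-\gamma}$); you should instead pair $\|R_nF\|_{L^2}\lesssim\lambda^{-v}s^{-\gamma}$ with $\|R_nG\|_{L^2}\lesssim 1$ (or vice versa) so that the $\lambda$-dependence is charged to only one factor.
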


Proposition \ref{proposition:quadrature-for-convolution} together with Proposition \ref{prop:equivalance-11}
shows that  quadrature rules are sufficient to derive tight bounds for operator differences, provided the sampling mechanism is appropriately designed. Recalling further Proposition \ref{Proposition:value-difference-deterministic} that small $\mathcal P_{D,\lambda,W,\beta}$ requires small $w_{\max}$, we then introduce two sampling and weighting schemes: spherical designs and equal weights, and $\tau$-quasi-uniform sets and D-type quadrature rules. The following two  corollaries   can be derived from  Proposition \ref{Proposition:value-difference-deterministic} and Proposition \ref{proposition:quadrature-for-convolution} directly.


\begin{corollary}\label{Cor:spherical-design-integral}
Let $0<\delta<1$, $u,v\in[0,1/2]$, $\hat{\phi}_k\stackrel{d}\sim k^{-2\gamma}$ with $\gamma>d/2$, and $\phi, \varphi$ be SBFs satisfying \eqref{kernel-relation} with  $\alpha\gamma>d/2$.
 Under Assumption \ref{Assumption:data}, if
   $\Lambda$ is a set of $s$-design with $s\leq c_\diamond|\Lambda|$ for some $c_\diamond>0$ and $w_i=\frac{1
}{|\Lambda|}$, then
 with confidence $1-\delta$, there holds
\begin{eqnarray*}
    \mathcal R_{\Lambda,\lambda,W,u,v} &\leq&
    C_7(1+q_\Lambda^{-1}s^{-1})^d\lambda^{-\max\{u,v\}}s^{-\gamma},\\
  \mathcal S_{\Lambda,\lambda,W,u}  &\leq &
    C_7(1+q_\Lambda^{-1}s^{-1})^d(\lambda^{-u}s^{-\gamma}+ s^{-\alpha\gamma}
   ),\\
    \mathcal P_{D,\lambda,W,\beta} 
    &\leq&
    C_7|\Lambda|^{-1/2}\lambda^{-d/(4 \gamma)}\log\frac6\delta,\qquad \forall 0\leq\beta\leq 1.
\end{eqnarray*}
If in addition $\Lambda$ is a $c_0$-tight $s$-design, then
\begin{eqnarray*}
 \mathcal R_{\Lambda,\lambda,W,u,v} \leq 
    C_8\lambda^{-\max\{u,v\}}s^{-\gamma},\qquad \mbox{and}\ 
  \mathcal S_{\Lambda,\lambda,W,u}   \leq 
    C_8(\lambda^{-u}s^{-\gamma}+ s^{-\alpha\gamma}
   ),   
\end{eqnarray*}
where $C_8$ is a constant depending only on $C_5,d,c_0$.
   \end{corollary}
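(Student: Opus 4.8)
The statement is a corollary, so the plan is simply to feed the hypotheses into the three earlier estimates and bookkeep constants; nothing new needs to be invented. The key observation is that a spherical $s$-design $\Lambda$, equipped with the equal weights $w_i=1/|\Lambda|$, is by Definition~\ref{Def:quadrature-rule} a spherical positive quadrature rule of degree $s$, i.e. it is an admissible choice of $\mathcal Q_{\Lambda,s}$ with weight matrix $W=W_s$. Consequently Proposition~\ref{proposition:quadrature-for-convolution} applies verbatim — its hypotheses $u,v\in[0,1/2]$, $\hat\phi_k\stackrel{d}\sim k^{-2\gamma}$, $\gamma>d/2$, $\alpha\gamma>d/2$ are exactly those assumed here — and yields
\begin{equation*}
{WCE}^{\phi,\phi}_{\Lambda,\lambda,W,u,v}\le C_7(1+q_\Lambda^{-1}s^{-1})^d\lambda^{-\max\{u,v\}}s^{-\gamma},\qquad
{WCE}^{\varphi,\phi}_{\Lambda,\lambda,W,u}\le C_7(1+q_\Lambda^{-1}s^{-1})^d\bigl(\lambda^{-u}s^{-\gamma}+s^{-\alpha\gamma}\bigr).
\end{equation*}
Then Proposition~\ref{prop:equivalance-11} converts these into the asserted bounds for $\mathcal R_{\Lambda,\lambda,W,u,v}$ and $\mathcal S_{\Lambda,\lambda,W,u}$, since by \eqref{equivalance-1}--\eqref{equivalance-2} those operator differences are literally equal to the corresponding discrepancies. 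The condition $s\le c_\diamond|\Lambda|$ is used only to keep the quadrature degree compatible with the sample size and does not enter the estimates.

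For the noise functional, I would invoke Proposition~\ref{Proposition:value-difference-deterministic} under Assumption~\ref{Assumption:data}: with confidence $1-\delta$,
\begin{equation*}
\mathcal P_{D,\lambda,W,\beta}\le C_6\lambda^{-d/(4\gamma)}\Bigl(\sum_{i=1}^{|\Lambda|}w_i^2+w_{\max}^2\Bigr)^{1/2}\log\frac6\delta .
\end{equation*}
Specializing to $w_i=1/|\Lambda|$ gives $\sum_i w_i^2=|\Lambda|^{-1}$ and $w_{\max}^2=|\Lambda|^{-2}$, hence $\bigl(\sum_i w_i^2+w_{\max}^2\bigr)^{1/2}\le\sqrt2\,|\Lambda|^{-1/2}$, and after enlarging the generic constant so that $C_7\ge\sqrt2\,C_6$ (and $C_7$ also dominates the constant in the first paragraph) we obtain $\mathcal P_{D,\lambda,W,\beta}\le C_7|\Lambda|^{-1/2}\lambda^{-d/(4\gamma)}\log(6/\delta)$ for all $0\le\beta\le1$. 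This disposes of the first block of the corollary.

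For the $c_0$-tight addendum the only remaining point is that $(1+q_\Lambda^{-1}s^{-1})^d$ is bounded by a constant depending only on $d$ and $c_0$; once this is in hand the two stated inequalities follow by setting $C_8:=C_7\sup_\Lambda(1+q_\Lambda^{-1}s^{-1})^d$. Reducing further, it suffices to have a separation estimate $q_\Lambda\ge c(d,c_0)\,s^{-1}$. I would deduce this from the fact that the $c_0$-tight designs furnished by Lemma~\ref{Lemma:Spherica-d-1} (equivalently, by the construction of \cite{bondarenko2013optimal}) can be taken quasi-uniform with mesh ratio $\rho_\Lambda\le\tau(d,c_0)$, so that combining $q_\Lambda=h_\Lambda/\rho_\Lambda$ with the elementary volume-covering lower bound $h_\Lambda\gtrsim|\Lambda|^{-1/d}\stackrel{d}\sim c_0^{-1/d}s^{-1}$ gives the claim. \emph{This last paragraph is the only part that requires care}: the cardinality condition $|\Lambda|=c_0s^d$ alone does not force well-separation (a tight design could in principle cluster a positive fraction of its points), so the separation bound must be read off from the specific construction rather than from the nominal definition; without it the factor $(1+q_\Lambda^{-1}s^{-1})^d$ cannot be discarded and the improvement to $C_8$ fails. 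Everything else — substituting the hypotheses into Propositions~\ref{Proposition:value-difference-deterministic}, \ref{prop:equivalance-11} and \ref{proposition:quadrature-for-convolution}, and simplifying the weight sum — is routine.
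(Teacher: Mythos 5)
Your proposal follows exactly the route the paper intends: the paper gives no separate proof of this corollary, stating only that it "can be derived from Proposition \ref{Proposition:value-difference-deterministic} and Proposition \ref{proposition:quadrature-for-convolution} directly," and your chain (quadrature bound $\to$ equivalence of Proposition \ref{prop:equivalance-11} $\to$ specialization of the weights $w_i=1/|\Lambda|$ in the noise bound) is precisely that direct derivation. Your flag that the $c_0$-tight case additionally needs the separation estimate $q_\Lambda\gtrsim s^{-1}$, which follows from the quasi-uniformity of the constructed designs rather than from the cardinality condition alone, is a legitimate point the paper leaves implicit, and your handling of it is correct.
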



\begin{corollary}\label{Cor:quasi-uniform-integral}
Let $0<\delta<1$, $u,v\in[0,1/2]$, $\hat{\phi}_k\stackrel{d}\sim k^{-2\gamma}$ with $\gamma>d/2$, and $\phi, \varphi$ be SBFs satisfying \eqref{kernel-relation} with  $\alpha\gamma>d/2$.  Under Assumption \ref{Assumption:data}, If 
  $\Lambda$ is $\tau$-quasi-uniform and $\mathcal Q_{\Lambda,s}:=\{(w_{i,s},  x^*_{i}): w_{i,s}> 0
\hbox{~and~}   x^*_{i}\in \Lambda\}$  is a D-type quadrature rule,
 then
 with confidence $1-\delta$, there holds
\begin{eqnarray*}
    \mathcal R_{\Lambda,\lambda,W,u,v} &\leq&
    C_9 \lambda^{-\max\{u,v\}}s^{-\gamma},\\
  \mathcal S_{\Lambda,\lambda,W,u}  &\leq &
    C_9 (\lambda^{-u}s^{-\gamma}+ s^{-\alpha\gamma}
   ),\\
    \mathcal P_{D,\lambda,W,\beta} 
    &\leq&
    C_9|\Lambda|^{-1/2}\lambda^{-d/(4\gamma)}\log\frac6\delta, \qquad \forall0\leq\beta\leq 1,
\end{eqnarray*}
 where $C_9$ is a constant depending only on $C_4,C_5,d,c_0$.
   \end{corollary}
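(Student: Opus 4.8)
The plan is to read Corollary~\ref{Cor:quasi-uniform-integral} off the machinery already set up: the identification of operator differences with quadrature discrepancies (Proposition~\ref{prop:equivalance-11}), the discrepancy bounds for products of functions (Proposition~\ref{proposition:quadrature-for-convolution}), and the noise bound (Proposition~\ref{Proposition:value-difference-deterministic}). What has to be supplied are only the geometry of a $\tau$-quasi-uniform set and the weight normalisation of a D-type rule; here $W=\diag(w_{1,s},\dots,w_{|\Lambda|,s})$ denotes the matrix of the D-type quadrature weights. None of the three propositions is re-proved.

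For $\mathcal R$ and $\mathcal S$, Proposition~\ref{prop:equivalance-11} gives $\mathcal R_{\Lambda,\lambda,W,u,v}={WCE}^{\phi,\phi}_{\Lambda,\lambda,W,u,v}$ and $\mathcal S_{\Lambda,\lambda,W,u}={WCE}^{\varphi,\phi}_{\Lambda,\lambda,W,u}$. Since $u,v\in[0,1/2]$, $\hat{\phi}_k\stackrel{d}\sim k^{-2\gamma}$ with $\gamma>d/2$, and $\mathcal Q_{\Lambda,s}$ is a quadrature rule of degree $s$ with $\alpha\gamma>d/2$, Proposition~\ref{proposition:quadrature-for-convolution} applies and yields
\begin{align*}
\mathcal R_{\Lambda,\lambda,W,u,v}&\le C_7\bigl(1+q_\Lambda^{-1}s^{-1}\bigr)^d\lambda^{-\max\{u,v\}}s^{-\gamma},\\
\mathcal S_{\Lambda,\lambda,W,u}&\le C_7\bigl(1+q_\Lambda^{-1}s^{-1}\bigr)^d\bigl(\lambda^{-u}s^{-\gamma}+s^{-\alpha\gamma}\bigr).
\end{align*}
It then remains to absorb the factor $(1+q_\Lambda^{-1}s^{-1})^d$ into a constant. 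Here I invoke $\tau$-quasi-uniformity: $q_\Lambda\stackrel{d}\sim h_\Lambda$, and by the usual packing and covering counting both are comparable to $|\Lambda|^{-1/d}$ with constants depending only on $\tau$ and $d$; since the D-type rule furnished by Lemma~\ref{Lemma:fixed cubature-poly} has degree $s$ proportional to $h_\Lambda^{-1}$, the quantity $q_\Lambda^{-1}s^{-1}$ is bounded by a constant $c(\tau,d)$, so $(1+q_\Lambda^{-1}s^{-1})^d\le(1+c(\tau,d))^d$. This gives the asserted bounds for $\mathcal R$ and $\mathcal S$, which are deterministic.

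For $\mathcal P_{D,\lambda,W,\beta}$ I use the defining property of a D-type rule, $0<w_{i,s}\le c_*|\Lambda|^{-1}$: this forces $\sum_{i=1}^{|\Lambda|}w_{i,s}^2\le|\Lambda|\cdot c_*^2|\Lambda|^{-2}=c_*^2|\Lambda|^{-1}$ and $w_{\max}^2\le c_*^2|\Lambda|^{-2}\le c_*^2|\Lambda|^{-1}$, hence $\bigl(\sum_i w_{i,s}^2+w_{\max}^2\bigr)^{1/2}\le\sqrt2\,c_*|\Lambda|^{-1/2}$. Substituting into Proposition~\ref{Proposition:value-difference-deterministic}, which holds for every $0\le\beta\le1$ under Assumption~\ref{Assumption:data}, gives with confidence $1-\delta$ that $\mathcal P_{D,\lambda,W,\beta}\le\sqrt2\,c_*C_6\,\lambda^{-d/(4\gamma)}|\Lambda|^{-1/2}\log\frac6\delta$. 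Taking $C_9$ to be a common bound for the constants appearing in the three estimates, and noting that only the $\mathcal P$-estimate is probabilistic while the other two hold surely, their conjunction holds with confidence $1-\delta$; this completes the proof.

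The only step that is not mere bookkeeping is the control of $(1+q_\Lambda^{-1}s^{-1})^d$ in the second paragraph: it rests on the fact that, for a $\tau$-quasi-uniform point set, the degree $s$ of the D-type quadrature rule one constructs is not merely at most $c_\diamond|\Lambda|^{1/d}$ but genuinely of order $|\Lambda|^{1/d}\sim q_\Lambda^{-1}$, so that $q_\Lambda^{-1}s^{-1}$ stays bounded in terms of $\tau$ and $d$ alone. Once this is granted, applying the three propositions and collecting constants is routine.
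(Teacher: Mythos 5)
Your proof is correct and follows exactly the route the paper intends: the paper offers no separate argument for this corollary beyond stating that it follows ``directly'' from Propositions~\ref{prop:equivalance-11}, \ref{proposition:quadrature-for-convolution} and \ref{Proposition:value-difference-deterministic}, and your assembly of those three results with the D-type weight bound $w_{i,s}\le c_*|\Lambda|^{-1}$ is precisely that derivation. Your observation that absorbing $(1+q_\Lambda^{-1}s^{-1})^d$ into the constant requires $s$ to be genuinely of order $q_\Lambda^{-1}\sim|\Lambda|^{1/d}$ (as supplied by Lemma~\ref{Lemma:fixed cubature-poly} for $\tau$-quasi-uniform sets) is a detail the paper leaves implicit, and you are right to flag and close it.
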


\section{ Weighted Spectral Filter Algorithms and Operator Representations}\label{sec.spectral}

Different from KI that focuses on perfect fitting performance without considering the existence of noise, weighted spectral filter algorithms (WSFA), introduced in \cite{liu2024weighted}, devotes to introducing an additional filter  parameter to balance the fitting performance  and stability. The basic idea behind WSFA is to use spectral filters on   the matrix $\Psi_{\Lambda,W}$ to reduce the negative effect of small eigenvalues.

Let $g_\lambda:[0,\kappa]\rightarrow\mathbb R$ with $\lambda>0$ be the spectral filter function satisfying 
\begin{equation}\label{condition1}
\sup_{0<\sigma\le\kappa}|g_\lambda(\sigma)|\le \frac{b}{\lambda},
\qquad \sup_{0<\sigma\le \kappa}|g_\lambda(\sigma)\sigma|\le b,
\end{equation}
and
\begin{equation}\label{condition2}
\sup_{0<\sigma\le \kappa}|1-g_\lambda(\sigma)\sigma|\sigma^v\le
\tilde{C}_0\lambda^v, \qquad \forall\ 0\leq v\leq
{\nu}_g,
\end{equation}
where  $b>0$ is an absolute constant, $\nu_g>0$ is called as the qualification of the spectral filters and $\tilde{C}_0>0$ is an absolute  constant. Define
\begin{equation}\label{WSFA}
  f_{D,\lambda,W}=S^T_{D,W} g_\lambda(\Psi_{\Lambda,W})  {\bf y}_{D,W}= g_\lambda(L_{\phi,\Lambda,W}) S^T_{D,W} {\bf y}_{D,W},
\end{equation} 
where  $\Psi_{\Lambda,W}$ is given in \eqref{def.psi}, and the second equality is due to the fact that \cite[eqs(2.43)]{engl1996regularization} 
 \begin{equation}\label{exchange-adjoint}
     h(AA^T)A=Ah(A^TA) 
  \end{equation}
holds for any compact operator $A$  and piecewise continuous function $h(\cdot)$.

For any $f^*\in\mathcal N_\phi$,  it follows from \eqref{WSFA} and \eqref{Model1:fixed} that
\begin{eqnarray}\label{Analysis-decomposition}
      f_{D,\lambda,W}=g_\lambda(L_{\phi,D,W})S_{D,W}^T{\bf y}_{D,W}
      = \mathcal I_{D,g,\lambda}f^*
      +g_\lambda(L_{\phi,D,W})S_{D,W}^T\vec{\varepsilon}_{W},
\end{eqnarray}  
where $\mathcal I_{D,g,\lambda}:=g_\lambda(L_{\phi,D,W})L_{\phi,D,W}$
and
$\vec{\varepsilon}_{W}:=(\sqrt{w_1}\varepsilon_1,\dots,\sqrt{w_{|\Lambda|}}\varepsilon_{|\Lambda|})^T$. Similar as $\mathcal I_D$ in \eqref{clean-KI}, 
$\mathcal I_{D,g,\lambda}:\mathcal H_K\rightarrow\mathcal H_K$ can be regarded as a quasi-interpolation operator. On one hand, \eqref{condition1} shows that 
\begin{equation}\label{lebesgue}
    \|\mathcal I_{D,g,\lambda}\|_{\mathcal N_\phi\rightarrow\mathcal N_\phi}  \leq 1,
\end{equation}
presenting the stability of the quasi-interpolation. Different from KI in which it is difficult to present a tight bound \cite{narcowich1998stability} of $\sigma_{1}(\Psi_{\Lambda,W}^{-1})$, \eqref{condition1} also yields $\sigma_1(g_{\lambda}(\Psi_{\Lambda,W}))\leq \lambda^{-1}$. In a nutshell, \eqref{condition1} presents both Lebesgue constant type stability \cite{hangelbroek2010kernel} and minimal eigenvalue type stability \cite{narcowich1998stability} for WSFA.  On the other hand, \eqref{condition2} quantifies the fitting performance by encoding the regularity of $f^*$ in the approximation process.  In particular, for any $f^*\in\mathcal N_\phi$, we get from $\|f\|_{L^2}=\|L_\phi^{1/2}f\|_{\phi}$ that
\begin{eqnarray*} 
    &&\|f^*-\mathcal I_{D,g,\lambda}f^*\|_{L^2}
    =\|L_\phi^{1/2}(g_\lambda(L_{\phi,D,W})L_{\phi,D,W}-I)f^*\|_\phi\nonumber\\
    &\leq& \mathcal Q_{\Lambda,\lambda,W} \|(g_\lambda(L_{\phi,D,W})L_{\phi,D,W}-I)L_{\phi,D,W}^{1/2}\|\|f^*\|_\phi \\
     &\leq &
   \mathcal Q_{\Lambda,\lambda,W}\sup_{0<\sigma\le \kappa}|1-g_\lambda(\sigma)\sigma|\sigma^{1/2}\|f^*\|_\phi 
    \leq 
  \tilde{C}_0\lambda^{1/2} \mathcal Q_{\Lambda,\lambda,W} \|f^*\|_\phi,
\end{eqnarray*}
where $\mathcal Q_{\Lambda,\lambda,W}$ is given in \eqref{def:Q}.   We then present the second assumption concerning the filter $g_\lambda$.
\begin{assumption}\label{Assumption:filter}
Assume that $g_\lambda:[0,\kappa]$ satisfying \eqref{condition1} and \eqref{condition2} with qualification $\nu_g$.
\end{assumption}

There are numerous filter functions satisfying Assumption \ref{Assumption:filter}. For instance, Tikhonov regularization \eqref{KRR} is a special type of WSFA with qualification $\nu_g=1$.  
  We refer the readers to Table \ref{Tab:comparison} for more details on the spectral filters for Tikhonov regularization, Landweber iteration, $\nu$-method, spectral cut-off and iterated Tikhonov. Their numerical implementation as well as their computational burden, storage requirements can be easily deduced from  \cite{gerfo2008spectral}.  Theoretical verifications for WSFA were firstly provided in \cite{liu2024weighted} when $D=\{(x_i,y_i)\}_{i=1}^{|D|}$ satisfying \eqref{Model1:fixed} for $f^*\in\mathcal N_\phi$ and mean-zero random bounded noise, i.e., $E[\varepsilon_i]=0$ and $|\varepsilon|\leq M_0$ for some $M_0>0$. d
  \begin{table}[H]
    \begin{center}
	\begin{tabular}{|c|c|c|c|c|}
		\hline
		Algorithm & Storage  & Computation & Filter function $g_{\lambda}(\sigma)$ & $ \nu_g$ \\
		\hline
		Tikhonov & $O(|\Lambda|^{2})$ & $O(|\Lambda|^{3})$ & $g_{\lambda}(\sigma)=\frac{1}{\sigma+\lambda}$ & 1 \\
		\hline
		Landweber & $O(|\Lambda|^{2})$ & $O(|\Lambda|^{2})$ & $g_{t}(\sigma)=\tau \sum_{i=0}^{t-1}(1-\tau \sigma)^{i}$ & $\infty$ \\
		\hline
		$\nu$-method & $O(|\Lambda|^{2})$ & $O(|\Lambda|^{3})$ & $g_{t}(\sigma)=p_{t}(\sigma)$ & { $\infty$} \\
		\hline
		cut-off & $O(|\Lambda|^{2})$ & $O(|\Lambda|^{3})$ & $g_{\lambda}(\sigma)=\{\begin{array}{cc}\frac{1}{\sigma} & \sigma \geq \lambda \\ 0 & \sigma<\lambda\end{array}$ & $\infty$ \\
		\hline
		iterated & $O(|\Lambda|^{2})$ & $O(|\Lambda|^{3})$ & $g_{\lambda}(\sigma)=\frac{(\sigma+\lambda)^{v}-\lambda^{v}}{\sigma(\sigma+\lambda)^{v}} $ & {  $v$} \\
		\hline
    \end{tabular}
    \end{center}
      \caption{Spectral filter functions and corresponding properties}\label{Tab:comparison}
    \end{table}

Different from \cite{hesse2017radial,liu2024weighted}
we are also  concerned with noisy data fitting problems satisfying \eqref{Assumption:data}   with $0<\alpha<1$. 
The unboundedness of random noise reflects the existence of outliers in the sampling process. The existence of unbounded noise   and  NSB has not been considered  in the literature \cite{narcowich2002scattered,levesley2005approximation,le2006continuous,narcowich2007direct,le2008localized,hangelbroek2010kernel,hangelbroek2011kernel,hesse2017radial,Feng2021radial,lin2021subsampling,liu2024weighted} and requires novel operator differences such as $\mathcal L_{\phi}, \mathcal L_{\Lambda,\phi,W}$, and operator representations in the following theorem.
 
\begin{theorem}\label{Theorem:operator-representation}
Let $\hat{\phi}_k\stackrel{d}\sim k^{-2\gamma}$ with $\gamma>d/2$, $\phi$, $\varphi$ satisfy \eqref{kernel-relation} for $\alpha\gamma\geq  \beta/2$ and  $g_\lambda$ satisfies \eqref{condition1} and \eqref{condition2}. Under Assumptions \ref{Assumption:data} and \ref{Assumption:filter} and $\mathcal R_{\Lambda,\lambda,W,1/2,1/2}\leq \tilde{c}<1/2$, if  $\alpha<1$,
  then
\begin{eqnarray}\label{Approximation-operator-represen}
   \|J_{\phi}f^\diamond_{D,W,\lambda}-f^*\|_{L^2}
      \leq  
      \bar{C}_1 (\overbrace{ \mathcal P_{D,\lambda,W,0}+ 
       \mathcal R_{\Lambda,\lambda,W,\frac12,\frac12} 
      + 
      \mathcal S_{\Lambda,\lambda,W,\frac12}}^{\mbox{stability error}}
       + 
      \overbrace{\lambda^{\min\{\frac{\alpha}{2},\nu_g\}}
   }^{\mbox{fitting error}}). 
\end{eqnarray}
If  $\alpha>1$, then
\begin{eqnarray}\label{Approximation-operator-represen-large}
   \|f^\diamond_{D,W,\lambda}-f^*\|_{\psi}
     \leq  
      \bar{C}_1 (\overbrace{   \lambda^{-\frac{\beta}{2}}\mathcal P_{D,\lambda,W,1} }^{\mbox{stability error}} 
       + 
      \overbrace{ \lambda^{-\beta/2}(\lambda^{\min\{\frac{\alpha}{2},\nu_g\}}+\lambda^{\min\{\frac{1}{2},\nu_g\}}\mathcal R_{\Lambda,\lambda,W,0,0}\mathbb I_{\alpha>2})}^{\mbox{fitting error}}), 
\end{eqnarray}
where $\bar{C}_1$ is a constant depending only on $\|f^*\|_\varphi,b,\kappa,\tilde{c},\alpha$ and  $\mathbb I_{\mathcal A}$ denotes the indicator of the event $\mathcal A$ and 
\end{theorem}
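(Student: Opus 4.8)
The plan is to derive an exact operator identity for the error of $f^\diamond_{D,W,\lambda}$ and then estimate every resulting factor by the operator differences $\mathcal R_{\Lambda,\lambda,W,u,v}$, $\mathcal S_{\Lambda,\lambda,W,u}$, $\mathcal P_{D,\lambda,W,\beta}$, together with the stability bounds $\mathcal Q_{\Lambda,\lambda,W}$, $\mathcal Q^*_{\Lambda,\lambda,W}$ of Proposition~\ref{Prop:operator-sampling} (available since $\mathcal R_{\Lambda,\lambda,W,1/2,1/2}\le\tilde c<1/2$), the filter properties \eqref{condition1}--\eqref{condition2}, the commutation rule \eqref{exchange-adjoint}, and the source representation of $f^*$ from Lemma~\ref{Lemma:source-condition}. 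Throughout I abbreviate $L_\lambda:=L_\phi+\lambda I$ and $L_{\Lambda,\lambda}:=L_{\phi,\Lambda,W}+\lambda I$, and use repeatedly the two ``interfacing'' bounds $\|L_\phi^{1/2}L_\lambda^{-1/2}\|_{\phi\to\phi}\le1$ and $\|L_{\Lambda,\lambda}^{1/2}g_\lambda(L_{\phi,\Lambda,W})L_{\Lambda,\lambda}^{1/2}\|_{\phi\to\phi}\le2b$ (the latter from both parts of \eqref{condition1}), which collapse any occurrence of $g_\lambda(L_{\phi,\Lambda,W})$ to a universal constant once the adjacent factors have been ``re-based'' from $L_{\phi,\Lambda,W}$ to $L_\phi$ via $\mathcal Q_{\Lambda,\lambda,W}$ and $\mathcal Q^*_{\Lambda,\lambda,W}$.

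For the mis-specified regime $\alpha<1$ (hence $\beta=0$ and $\|\cdot\|_\psi=\|\cdot\|_{L^2}$) I would split $f^\diamond_{D,W,\lambda}=g_\lambda(L_{\phi,\Lambda,W})\mathcal L_{\phi,\Lambda,W}f^*+g_\lambda(L_{\phi,\Lambda,W})S^T_{\Lambda,W}\vec{\varepsilon}_{W}$, using $S^T_{\Lambda,W}\mathbf{y}_{\Lambda,W}-\mathcal L_{\phi,\Lambda,W}f^*=S^T_{\Lambda,W}\vec{\varepsilon}_{W}$. The noise contribution is $\|L_\phi^{1/2}g_\lambda(L_{\phi,\Lambda,W})S^T_{\Lambda,W}\vec{\varepsilon}_{W}\|_\phi$; inserting $L_\lambda^{\pm1/2}$ and $L_{\Lambda,\lambda}^{\pm1/2}$ and invoking the interfacing bounds together with $\mathcal Q_{\Lambda,\lambda,W},\mathcal Q^*_{\Lambda,\lambda,W}$ gives $\lesssim\mathcal P_{D,\lambda,W,0}$. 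For the fitting part, \eqref{exchange-adjoint} identifies the ideal operator $J_\phi g_\lambda(L_\phi)J_\phi^T=g_\lambda(\mathcal L_\phi)\mathcal L_\phi$; writing $f^*=\mathcal L_\phi^{\alpha/2}h$ with $\|h\|_{L^2}=\|f^*\|_\varphi$ (Lemma~\ref{Lemma:source-condition}), the ideal error obeys $\|(I-g_\lambda(\mathcal L_\phi)\mathcal L_\phi)f^*\|_{L^2}\le\sup_{0<\sigma\le\kappa}|1-\sigma g_\lambda(\sigma)|\sigma^{\alpha/2}\,\|f^*\|_\varphi\lesssim\lambda^{\min\{\alpha/2,\nu_g\}}$ via \eqref{condition2} (absorbing $\sigma^{\alpha/2-\nu_g}\le\kappa^{\alpha/2-\nu_g}$ when $\alpha/2>\nu_g$). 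The remaining discrepancy decomposes as $g_\lambda(L_{\phi,\Lambda,W})(\mathcal L_{\phi,\Lambda,W}-J_\phi^T)f^*+\bigl(g_\lambda(L_{\phi,\Lambda,W})-g_\lambda(L_\phi)\bigr)J_\phi^Tf^*$; the first piece, sandwiched as above, is $\lesssim\mathcal S_{\Lambda,\lambda,W,1/2}$, and the second is handled by a resolvent--perturbation expansion of $g_\lambda(L_{\phi,\Lambda,W})-g_\lambda(L_\phi)$ (legitimate for the whole filter class of Table~\ref{Tab:comparison} because it reduces to the uniformly bounded factors $g_\lambda(L_{\phi,\Lambda,W})$ and $\lambda g_\lambda(L_{\phi,\Lambda,W})$) organised so that $L_\phi-L_{\phi,\Lambda,W}$ sits between two copies of $L_\lambda^{-1/2}$, i.e.\ as $\mathcal R_{\Lambda,\lambda,W,1/2,1/2}$. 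Summing the four contributions yields \eqref{Approximation-operator-represen}.

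For the oversmooth regime $\alpha>1$ we have $f^*\in\mathcal N_\phi$, and \eqref{kernel-relation} lets us write $f^*=L_\phi^{(\alpha-1)/2}g$ with $g\in\mathcal N_\phi$ and $\|g\|_\phi=\|f^*\|_\varphi$; I would start from the decomposition \eqref{Analysis-decomposition}, $f^\diamond_{D,W,\lambda}=\mathcal I_{D,g,\lambda}f^*+g_\lambda(L_{\phi,\Lambda,W})S^T_{\Lambda,W}\vec{\varepsilon}_{W}$. Passing to the $\psi$-norm through $\|u\|_\psi=\|L_\phi^{(1-\beta)/2}u\|_\phi$ (Lemma~\ref{Lemma:source-condition}) and the weighted AM--GM inequality $\lambda^{\beta}\sigma^{1-\beta}\le\lambda+\sigma$ gives $\|u\|_\psi\le\lambda^{-\beta/2}\|L_\lambda^{1/2}u\|_\phi$, which accounts for the $\lambda^{-\beta/2}$ prefactors in \eqref{Approximation-operator-represen-large}. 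The noise term then becomes $\le\lambda^{-\beta/2}\|L_\lambda^{1/2}g_\lambda(L_{\phi,\Lambda,W})S^T_{\Lambda,W}\vec{\varepsilon}_{W}\|_\phi\lesssim\lambda^{-\beta/2}\mathcal P_{D,\lambda,W,1}$, exactly as before. For the fitting term $\lambda^{-\beta/2}\|L_\lambda^{1/2}(\mathcal I_{D,g,\lambda}-I)L_\phi^{(\alpha-1)/2}g\|_\phi$, I would first re-base $L_\lambda^{1/2}$ to $L_{\Lambda,\lambda}^{1/2}$ via $\mathcal Q_{\Lambda,\lambda,W}$ and then estimate $\|L_{\Lambda,\lambda}^{1/2}(g_\lambda(L_{\phi,\Lambda,W})L_{\phi,\Lambda,W}-I)L_\phi^{(\alpha-1)/2}\|$: when $1<\alpha\le2$ the exponent $(\alpha-1)/2\le1/2$ permits replacing $L_\phi^{(\alpha-1)/2}$ by $L_{\Lambda,\lambda}^{(\alpha-1)/2}$ at the cost of a Cordes/operator-monotonicity constant governed by $\mathcal Q^*_{\Lambda,\lambda,W}$, which reduces matters to the scalar estimate $\sup_{0<\sigma\le\kappa}|1-\sigma g_\lambda(\sigma)|(\sigma+\lambda)^{\alpha/2}\lesssim\lambda^{\min\{\alpha/2,\nu_g\}}$, giving the $\lambda^{-\beta/2}\lambda^{\min\{\alpha/2,\nu_g\}}$ term; when $\alpha>2$ this replacement is no longer free, and one instead isolates a single factor $L_\phi-L_{\phi,\Lambda,W}$ (contributing $\mathcal R_{\Lambda,\lambda,W,0,0}=\|L_\phi-L_{\phi,\Lambda,W}\|_{\phi\to\phi}$) while the remaining operator is absorbed by $g_\lambda(L_{\phi,\Lambda,W})$ into a qualification term with exponent $\min\{1/2,\nu_g\}$, which is exactly the $\mathbb I_{\alpha>2}$ correction in \eqref{Approximation-operator-represen-large}.

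The hardest part, I expect, is the bookkeeping needed to keep \emph{every} empirical--population perturbation in the canonical sandwiched form $\mathcal R_{\Lambda,\lambda,W,u,v}$ or $\mathcal S_{\Lambda,\lambda,W,u}$: a careless manipulation produces fractional differences such as $L_\phi^{s}-L_{\phi,\Lambda,W}^{s}$ that do not reduce to these quantities, so one must track which operator each functional-calculus factor is built on and insert $\mathcal Q_{\Lambda,\lambda,W}$, $\mathcal Q^*_{\Lambda,\lambda,W}$ and Cordes-type inequalities in precisely the right order (including verifying the admissible exponent range, which is what distinguishes the $\alpha\le2$ and $\alpha>2$ cases). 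Beyond this delicate accounting, the two substantive novelties over the standard integral-operator machinery are: (i) in the range $\alpha<1$, where $\|f^*\|_\phi=\infty$ forces the source element $h\in L^2$ to be carried through the whole argument and the embedding exponent $\beta$ to be balanced against the filter qualification by the weighted AM--GM step, which is what circumvents the native-space barrier; and (ii) in the range $\alpha>2$, where peeling off exactly one factor of $L_\phi-L_{\phi,\Lambda,W}$ and absorbing the remainder into the filter qualification is what prevents the fitting rate $\lambda^{\min\{\alpha/2,\nu_g\}}$ from saturating. A subsidiary point is to confirm that the resolvent-perturbation expansion of $g_\lambda(L_{\phi,\Lambda,W})-g_\lambda(L_\phi)$ is valid uniformly over all filters satisfying Assumption~\ref{Assumption:filter}, not merely for Tikhonov regularization, by reducing it to the uniformly bounded quantities in \eqref{condition1}.
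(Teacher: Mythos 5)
Your overall architecture coincides with the paper's: the same split into a noise term (collapsed to $\mathcal P_{D,\lambda,W,0}$ resp.\ $\mathcal P_{D,\lambda,W,1}$ via the interfacing bounds and $\mathcal Q_{\Lambda,\lambda,W}$), the same ideal filter error $\|(I-g_\lambda(\mathcal L_\phi)\mathcal L_\phi)\mathcal L_\phi^{\alpha/2}h\|_{L^2}\lesssim\lambda^{\min\{\alpha/2,\nu_g\}}$, the same discretization term $g_\lambda(L_{\phi,\Lambda,W})(\mathcal L_{\phi,\Lambda,W}-J_\phi^T)f^*\lesssim\mathcal S_{\Lambda,\lambda,W,1/2}$, the same reduction $\|\cdot\|_\psi\le\lambda^{-\beta/2}\|(L_\phi+\lambda I)^{1/2}\cdot\|_\phi$ for $\alpha>1$, and the same dichotomy at $\alpha=2$, where one peels off a single factor $L_\phi-L_{\phi,\Lambda,W}$ and controls $\|L_{\phi,\Lambda,W}^{(\alpha-1)/2}-L_\phi^{(\alpha-1)/2}\|$ by the Lipschitz-type bound of Lemma~\ref{Lemma:cordes-11}, producing the $\lambda^{\min\{1/2,\nu_g\}}\mathcal R_{\Lambda,\lambda,W,0,0}\mathbb I_{\alpha>2}$ correction. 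Your direct AM--GM derivation of the $\lambda^{-\beta/2}$ prefactor is in fact slightly cleaner than the paper's route through \eqref{Holder-type-norm}.

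The one genuine gap is your treatment of $\bigl(g_\lambda(L_{\phi,\Lambda,W})-g_\lambda(L_\phi)\bigr)J_\phi^Tf^*$ in the regime $\alpha<1$ by a ``resolvent--perturbation expansion.'' Such an expansion exists only when $g_\lambda$ has a resolvent structure (Tikhonov, or more generally a rational filter); for spectral cut-off the map $\sigma\mapsto g_\lambda(\sigma)$ is not even continuous, and for Landweber the natural expansion of $g_t(A)-g_t(B)$ produces $t$ cross terms that are not automatically organised into a single $\mathcal R_{\Lambda,\lambda,W,1/2,1/2}$ factor. You flag this as a point ``to confirm,'' but as stated the step would fail for part of the filter class of Assumption~\ref{Assumption:filter}. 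The paper closes this hole with the exact algebraic identity, valid for \emph{any} function $g_\lambda$ and any pair of positive operators $A,B$,
\begin{equation*}
   g_\lambda(A)-g_\lambda(B)\;=\;-\bigl[I-g_\lambda(A)A\bigr]g_\lambda(B)\;+\;g_\lambda(A)\bigl(B-A\bigr)g_\lambda(B)\;+\;g_\lambda(A)\bigl[I-Bg_\lambda(B)\bigr],
\end{equation*}
applied with $A=L_{\phi,\Lambda,W}$, $B=L_\phi$ to $J_\phi^Tf^*$; the three pieces are precisely the terms $\mathcal B_2$, $\mathcal B_3$, $\mathcal B_4$ of the paper's proof, each controlled using only \eqref{condition1}--\eqref{condition2} together with $\mathcal Q_{\Lambda,\lambda,W}$, $\mathcal Q^*_{\Lambda,\lambda,W}$ and the source representation, and the middle piece is exactly the sandwiched quantity $\mathcal R_{\Lambda,\lambda,W,1/2,1/2}$ you were aiming for. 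Substituting this identity for your expansion makes your argument complete and filter-uniform; without it, the $\alpha<1$ bound is only established for Tikhonov-type filters.
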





In Theorem \ref{Theorem:operator-representation}, 
besides $\mathcal P_{D,\lambda,W,0}$, additional two terms concerning operator differences are introduced due to $f^*\notin\mathcal N_\phi$. Indeed, we first find an $f_\phi\in\mathcal N_\phi$ to approximate   $f^*$ well and then treat the remnants of the approximation  $f^*-f_\phi$   and $f_\phi$ respectively as the artificial noise  and the new target function.  The terms $\mathcal R_{\Lambda,\lambda,W,1/2,1/2}$ and  $\mathcal S_{\Lambda,\lambda,W,1/2}$ are used to quantify the artificial noise $f^*-f_\phi$. If $\alpha\geq 1$ in Theorem \ref{Theorem:operator-representation},  $\mathcal R_{\Lambda,\lambda,W,1/2,1/2}$ and  $\mathcal S_{\Lambda,\lambda,W,1/2}$ are removable and $\mathcal P_{D,\lambda,W,1}$ is sufficient to describe the stability. 
We compare our stability results and related work in \cref{Tab:comparison-stability}. 
 \begin{table}[H]
    \begin{center}
	\begin{tabular}{|c|c|c|c| c|}
		\hline
		Reference & measurement   & noise &  $f^*$ &algorithm\\
		\hline
		\cite{narcowich1998stability} & $\sigma_1(\Phi_D)$
  & NON & $f^*\in \mathcal N_\phi$  & KI     \\
		\hline
		\cite{hangelbroek2010kernel} &   $\|\mathcal I_D\|_{L^\infty\rightarrow L^\infty}$  & NON   & $f^*\in \mathcal N_\phi$  & KI\\
		\hline
		\cite{hangelbroek2011kernel} & $\|\mathcal I_D\|_{L^2 \rightarrow L^\infty}$ & NON & $f^*\in \mathcal N_\phi$ &KI\\
  \hline
		\cite{hesse2017radial} &   $\mathcal P_{D,\lambda,W,1}$  & small  & $f^*\in \mathcal N_\phi$  & Tikhonov\\
  \hline
  \cite{Feng2021radial} & $\mathcal P_{D,\lambda,W,1}$ & bounded & $f^*\in \mathcal N_\phi$ &Tikhonov\\
		\hline
  \cite{liu2024weighted}   & $\mathcal P_{D,\lambda,W,1}$ & bounded & $f^*\in \mathcal N_\phi$  & WSFA\\
		\hline
  This paper   & Hybrid & unbounded & $f^*\notin \mathcal N_\phi$ & WSFA\\
		\hline
    \end{tabular}
    \end{center}
      \caption{Comparisons of different measurements of stability }\label{Tab:comparison-stability}
    \end{table}

 We then show   that spectral filters satisfying Assumption \ref{Assumption:filter} do not destroy the fitting performance of KI, provided the filter parameter $\lambda$ is appropriately selected. 
Different from the stability error, the fitting error presented in \cref{Theorem:operator-representation} increases with respect to the filter parameter $\lambda$, showing that small $\lambda$ yields small fitting error. In particular, if we select $g_\lambda$ satisfying \eqref{condition2} with $\nu_g\geq \alpha/2$, $\Lambda$ is quasi-uniformly sampled and $\{w_{i,s}\}_{i=1}^{|\Lambda|}$ is  D-type quadrature weights, then setting $\lambda=c_{**}s^{-2\gamma}$ follows   
$\mathcal R_{\Lambda,\lambda,W_s,1/2,1/2}\leq \tilde{c}<1/2$ according to Corollary \ref{Cor:quasi-uniform-integral}, and 
the approximation error in \eqref{Approximation-operator-represen} is
\begin{equation}\label{fitting-error-3}
    \|J_{\phi}f^\diamond_{D,\lambda,W_s}-f^*\|_{L^2}
     \leq 
    2\bar{C}_2 
    |\Lambda|^{-\alpha\gamma/d},
\end{equation}
which is the same as that for KI presented in \cref{Lemma:Narcowich-interpolation} under the same conditions. 

Fitting performance of WSFA have already  been studied in \cite{hesse2017radial,Feng2021radial,liu2024weighted}. In particular, \cite{hesse2017radial} considered Tikhonov regularization and fitting error of   order $|\Lambda|^{-\gamma/d}$ was derived for $w_i=1/|\Lambda|$, $i=1,\dots,|\Lambda|$, $\alpha=1$ and $\beta=0$;  
\cite{Feng2021radial} adopted weighted Tikhonov regularization and fitting error of   order $|\Lambda|^{-\alpha\gamma/ (2d) }$ was deduced for  $1\leq \alpha\leq 2$; \cite{liu2024weighted} focused on WSFA and fitting error of   order $|\Lambda|^{-\alpha\gamma/d}$ for $0\leq\beta\leq 1$ and $\alpha\geq 1$. We compare our results and these interesting work in \cref{Tab:comparison-11}. 
 \begin{table}[H]
    \begin{center}
	\begin{tabular}{|c|c|c|c|c|c|}
		\hline
		Reference & NSB   & saturation  & adaptivity &  optimality  &algorithm\\
  \hline
		\cite{narcowich2002scattered} & \XSolidBrush 
  & \Checkmark  & \XSolidBrush & \Checkmark &KI \\ 
		\hline
  	\cite{narcowich2007direct} & \Checkmark 
  & \XSolidBrush & \Checkmark  & \Checkmark &KI \\ 
		\hline
		\cite{hesse2017radial} & \XSolidBrush 
  & \XSolidBrush &\XSolidBrush  & \Checkmark &Tikhonov \\ 
		\hline
		\cite{Feng2021radial} & \XSolidBrush  & \XSolidBrush &\XSolidBrush    &\XSolidBrush &Tikhonov\\
		\hline
		\cite{liu2024weighted} & \XSolidBrush & \Checkmark & \Checkmark & \Checkmark &WSFA\\
		\hline
  This paper & \Checkmark & \Checkmark &\Checkmark &\Checkmark &WSFA \\
		\hline
    \end{tabular}
    \end{center}
      \caption{Comparisons of fitting peformance  among different results   on    conquering the native-space-barrier (NSB), circumventing the saturation (saturation), achieving optimal rates (optimality) and 
     adapting  to different metrics  (adaptivity)
      }\label{Tab:comparison-11}
    \end{table}

Combining Theorem \ref{Theorem:operator-representation} with Corollary \ref{Cor:quasi-uniform-integral} and Corollary \ref{Cor:spherical-design-integral}, we can derive explicit approximation rates of WSFA and show that the derived rates are at least not worse than the existing results in \cite{Feng2021radial,liu2024weighted}.

\begin{corollary}\label{Corollary:app-rate-quasi}
 Let $\delta\in(0,1)$, $\hat{\phi}_k\stackrel{d}\sim k^{-2\gamma}$ with $\gamma>d/2$ and $\phi$,  $\varphi$ satisfy \eqref{kernel-relation} for   $\alpha\gamma>d/2$ and $\alpha\leq 1$. Suppose that  $\Lambda=\{x_i\}_{i=1}^{|\Lambda|}$ is $\tau$-quasi uniform,  $\mathcal Q_{\Lambda,s}=\{(w_{i,s},  x_i):
\hbox{~and~}   x_i\in \Lambda\}$ is a D-type quadrature rule with coefficient $c_*$, and $s=c_\diamond |\Lambda|^{1/d}$. Under Assumptions \ref{Assumption:data} and \ref{Assumption:filter},
If  $\nu_g\geq \frac{\alpha}{2}$ and $\lambda\stackrel{d}\sim|\Lambda|^{-\frac{2\gamma}{2\gamma\alpha+d}}$,
 then   with confidence $1-\delta$, 
there holds
 \begin{equation}\label{approxiamtion-error-random}
     \|J_{\phi}f_{D,W_s,\lambda}-f^*\|_{L^2}  
   \leq \bar{C}_3|\Lambda|^{-\frac{\gamma\alpha}{2\gamma\alpha+d}}\log\frac{6}{\delta}, 
 \end{equation}
where $\bar{C}_3$ is a constant independent of $|\Lambda|,\lambda,\delta$.
\end{corollary}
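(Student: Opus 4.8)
The plan is to combine the operator representation of Theorem~\ref{Theorem:operator-representation} with the quadrature‑based bounds for $\mathcal R$, $\mathcal S$, $\mathcal P$ supplied by Corollary~\ref{Cor:quasi-uniform-integral}, and then read off the claimed rate from the choices $\lambda\stackrel{d}\sim|\Lambda|^{-2\gamma/(2\gamma\alpha+d)}$ and $s=c_\diamond|\Lambda|^{1/d}$. Since the error is measured in $\|\cdot\|_{L^2}$, the relevant embedding parameter is $\beta=0$, so the requirement $\alpha\gamma\geq\beta/2$ of Theorem~\ref{Theorem:operator-representation} is automatic from $\alpha\gamma>d/2$, and for $\alpha<1$ we are exactly in the regime where \eqref{Approximation-operator-represen} applies. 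The boundary case $\alpha=1$ corresponds to the well‑specified setting $f^*\in\mathcal N_\varphi=\mathcal N_\phi$ and can be disposed of by repeating the argument with $\mathcal N_\varphi=\mathcal N_\phi$ (equivalently it is the content of the companion analysis in \cite{liu2024weighted}), so I would concentrate on $\alpha<1$.

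First I would verify the structural hypothesis $\mathcal R_{\Lambda,\lambda,W_s,1/2,1/2}\leq\tilde c<1/2$ demanded by Theorem~\ref{Theorem:operator-representation}. By Corollary~\ref{Cor:quasi-uniform-integral}, $\mathcal R_{\Lambda,\lambda,W_s,1/2,1/2}\leq C_9\lambda^{-1/2}s^{-\gamma}$, and inserting the prescribed $\lambda$ and $s$ makes the right‑hand side of order $|\Lambda|^{\gamma/(2\gamma\alpha+d)-\gamma/d}=|\Lambda|^{-2\gamma^2\alpha/(d(2\gamma\alpha+d))}$, whose exponent is negative. Thus the hypothesis holds for all $|\Lambda|\geq N_0$ with $N_0$ depending only on $C_9,d,\gamma,\alpha$; for the finitely many $|\Lambda|<N_0$ the asserted bound is trivial after enlarging $\bar C_3$, using a crude a priori estimate on $\|f_{D,W_s,\lambda}\|_\phi$ that follows from \eqref{condition1} together with the sub‑Gaussian tail bounds for $\{\varepsilon_i\}$ on the event of probability $1-\delta$.

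Next I would substitute the three bounds of Corollary~\ref{Cor:quasi-uniform-integral} into \eqref{Approximation-operator-represen} and track the powers of $|\Lambda|$. Because $\nu_g\geq\alpha/2$, the fitting term is $\lambda^{\alpha/2}$, which is of order $|\Lambda|^{-\gamma\alpha/(2\gamma\alpha+d)}$; the noise term obeys $\mathcal P_{D,\lambda,W_s,0}\leq C_9|\Lambda|^{-1/2}\lambda^{-d/(4\gamma)}\log\tfrac6\delta$, and the identity $-\tfrac12+\tfrac{d}{2(2\gamma\alpha+d)}=-\tfrac{\gamma\alpha}{2\gamma\alpha+d}$ shows it too is of order $|\Lambda|^{-\gamma\alpha/(2\gamma\alpha+d)}\log\tfrac6\delta$ --- this matching of exponents is exactly the bias/variance balance that forces the choice $\lambda\stackrel{d}\sim|\Lambda|^{-2\gamma/(2\gamma\alpha+d)}$. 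The two stability terms coming from $f^*\notin\mathcal N_\phi$ are strictly smaller: $\mathcal R_{\Lambda,\lambda,W_s,1/2,1/2}$ is of order $|\Lambda|^{-2\gamma^2\alpha/(d(2\gamma\alpha+d))}$, i.e.\ $|\Lambda|^{-\gamma\alpha/(2\gamma\alpha+d)}$ times an extra factor with exponent $-\gamma\alpha(2\gamma-d)/(d(2\gamma\alpha+d))\leq 0$ (here $\gamma>d/2$ is used), while $\mathcal S_{\Lambda,\lambda,W_s,1/2}$ adds only $C_9 s^{-\alpha\gamma}$ of order $|\Lambda|^{-\alpha\gamma/d}$, dominated since $(2\gamma\alpha+d)/d>1$. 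Summing the four contributions in \eqref{Approximation-operator-represen} yields $\|J_\phi f_{D,W_s,\lambda}-f^*\|_{L^2}\leq\bar C_3|\Lambda|^{-\gamma\alpha/(2\gamma\alpha+d)}\log\tfrac6\delta$, with $\bar C_3$ assembled from $\bar C_1$ and $C_9$ and hence independent of $|\Lambda|,\lambda,\delta$.

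The only genuine work is the exponent bookkeeping, and I expect the main (mild) obstacle to be carrying out simultaneously the three verifications above: that $\mathcal R<1/2$ for the prescribed $\lambda$, that the stability terms $\mathcal R$ and $\mathcal S$ decay strictly faster than the target rate, and that $\mathcal P$ and $\lambda^{\alpha/2}$ balance at exactly $|\Lambda|^{-\gamma\alpha/(2\gamma\alpha+d)}$. Each reduces to an elementary inequality once $\lambda\stackrel{d}\sim|\Lambda|^{-2\gamma/(2\gamma\alpha+d)}$ and $s\stackrel{d}\sim|\Lambda|^{1/d}$ are inserted and $\gamma>d/2$, $\alpha>0$ are invoked; there is no analytic difficulty beyond what Theorem~\ref{Theorem:operator-representation} and Corollary~\ref{Cor:quasi-uniform-integral} already package.
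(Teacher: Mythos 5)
Your proposal is correct and follows essentially the same route as the paper: invoke Theorem~\ref{Theorem:operator-representation} (the $\alpha<1$ representation \eqref{Approximation-operator-represen}), substitute the bounds on $\mathcal R_{\Lambda,\lambda,W_s,1/2,1/2}$, $\mathcal S_{\Lambda,\lambda,W_s,1/2}$ and $\mathcal P_{D,\lambda,W_s,0}$ from Corollary~\ref{Cor:quasi-uniform-integral}, and balance exponents under $\lambda\stackrel{d}\sim|\Lambda|^{-2\gamma/(2\gamma\alpha+d)}$, $s=c_\diamond|\Lambda|^{1/d}$. Your write-up is in fact somewhat more careful than the paper's, since you explicitly verify the structural hypothesis $\mathcal R_{\Lambda,\lambda,W_s,1/2,1/2}\leq\tilde c<1/2$ and flag the boundary case $\alpha=1$, both of which the paper passes over tacitly.
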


The derived approximation rates in \eqref{approxiamtion-error-random} are optimal in the sense that \cite{lin2021subsampling} under the conditions of Corollary \ref{Corollary:app-rate-quasi}, there exists a sub-Gaussian distribution $\rho^*$ of $\varepsilon$ and an $f^*_{bad}\in\mathcal N_{\varphi}$  such that
\begin{eqnarray}\label{Lower-bound}
     P_{\rho^*}\left[\|f_{D,\lambda,W_s}-f^*_{bad}\|_{L^2}\geq \bar{C}_4|\Lambda|^{-\frac{\gamma\alpha}{2\gamma\alpha+d}}\right]
    \geq
    \frac{1}{4},
\end{eqnarray}
where $  P_{\rho^*}$ denotes the probability with respect to the distribution $\rho^*$, and $\bar{C}_4$ is a constant  independent of $|\Lambda|$, $\delta, s,\lambda$.
Similar corollary for $\alpha\geq 1$ can also be easily deduced as follows.

\begin{corollary}\label{Corollary:large-optimal}
    Let $\delta\in(0,1)$, $\hat{\phi}_k\stackrel{d}\sim k^{-2\gamma}$ with $\gamma>d/2$ and $\phi$, $\psi$ $\varphi$ satisfy \eqref{kernel-relation} for   $0\leq\beta\leq 1$ and $\alpha\geq 1$.  Suppose that  $\Lambda=\{x_i\}_{i=1}^{|\Lambda|}$ is $\tau$-quasi uniform,  $\mathcal Q_{\Lambda,s}=\{(w_{i,s},  x_i):
\hbox{~and~}   x_i\in \Lambda\}$ is a D-type quadrature rule  of degree   $s=c_\diamond |\Lambda|^{1/d}$ and with coefficient $c_*$. If  Assumption \ref{Assumption:data} and Assumption \ref{Assumption:filter} hold with $\nu_g\geq\frac{\alpha}{2}$ and $\lambda\stackrel{d}\sim|\Lambda|^{-\frac{2\gamma}{2\gamma\alpha+d}}$, then
\begin{eqnarray*}
   \|f^\diamond_{D,W_s,\lambda}-f^*\|_{\psi}
     \leq   \bar{C}_3' |\Lambda|^{- \frac{2(\alpha-\beta)\gamma}{2\alpha\gamma+d}}\log\frac{6}{\delta},
\end{eqnarray*}
where $\bar{C}_3'$ is a constant independent of $|\Lambda|,\lambda,\delta$.
\end{corollary}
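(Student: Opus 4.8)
The plan is to specialize Theorem~\ref{Theorem:operator-representation} in the over-smooth regime $\alpha\ge 1$ to the present sampling and weighting scheme, bound the operator differences on the right-hand side of \eqref{Approximation-operator-represen-large} by Corollary~\ref{Cor:quasi-uniform-integral}, and read off the rate after inserting the prescribed $\lambda$; this is parallel to, and shorter than, the proof of Corollary~\ref{Corollary:app-rate-quasi}, the only change being that the mis-specified bound \eqref{Approximation-operator-represen-large} replaces \eqref{Approximation-operator-represen}. (The boundary value $\alpha=1$ is the well-specified case $f^*\in\mathcal N_\varphi=\mathcal N_\phi$, for which the same estimate is available, e.g.\ from \cite{liu2024weighted}, so one may take $\alpha>1$.) First I would observe that, by Lemma~\ref{Lemma:fixed cubature-poly}, the hypotheses of Corollary~\ref{Corollary:large-optimal} ($\tau$-quasi-uniformity of $\Lambda$, $\mathcal Q_{\Lambda,s}$ of D-type with coefficient $c_*$, $s=c_\diamond|\Lambda|^{1/d}$) put us exactly in the scope of Corollary~\ref{Cor:quasi-uniform-integral}, and that $\gamma>d/2\ge\beta/2$ makes the condition $\alpha\gamma\ge\beta/2$ of Theorem~\ref{Theorem:operator-representation} automatic; hence, with confidence $1-\delta$ and for $u,v\in[0,1/2]$,
\[
\mathcal R_{\Lambda,\lambda,W_s,u,v}\le C_9\lambda^{-\max\{u,v\}}s^{-\gamma},\qquad
\mathcal P_{D,\lambda,W_s,\beta}\le C_9|\Lambda|^{-1/2}\lambda^{-d/(4\gamma)}\log\tfrac{6}{\delta}.
\]

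Before invoking \eqref{Approximation-operator-represen-large} I would check its standing hypothesis $\mathcal R_{\Lambda,\lambda,W_s,1/2,1/2}\le\tilde{c}<1/2$. Substituting $s=c_\diamond|\Lambda|^{1/d}$ and $\lambda\stackrel{d}\sim|\Lambda|^{-2\gamma/(2\gamma\alpha+d)}$ into the first bound above shows that $\mathcal R_{\Lambda,\lambda,W_s,1/2,1/2}$ is, up to a fixed constant, $|\Lambda|^{\gamma/(2\gamma\alpha+d)-\gamma/d}$, whose exponent is negative because $\alpha\ge 1$ forces $2\gamma\alpha+d>d$; thus it tends to $0$ as $|\Lambda|\to\infty$, and there is a threshold $N_0$ (depending only on $d,\gamma,\alpha,\tau,c_*,c_\diamond$ and the implicit constant in $\lambda$) beyond which the hypothesis holds. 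For $|\Lambda|<N_0$ the asserted estimate is trivially true once $\bar{C}_3'$ is enlarged, since the target rate is then bounded below by a positive constant while $\|f^\diamond_{D,W_s,\lambda}-f^*\|_\psi$ is controlled, with confidence $1-\delta$, by a crude bound (using $\|g_\lambda\|\le b/\lambda$ from \eqref{condition1}, \eqref{operator-norm-bound}, $\|f^*\|_\varphi<\infty$, and a sub-Gaussian tail estimate for $\sum_i w_i\varepsilon_i\phi_{x_i}$) at the cost of a factor $\log\frac{6}{\delta}$. So assume $|\Lambda|\ge N_0$.

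With the hypothesis in force, I would plug the bounds of Corollary~\ref{Cor:quasi-uniform-integral} into \eqref{Approximation-operator-represen-large}. The stability term satisfies $\lambda^{-\beta/2}\mathcal P_{D,\lambda,W_s,1}\le C_9|\Lambda|^{-1/2}\lambda^{-\beta/2-d/(4\gamma)}\log\frac{6}{\delta}$, and inserting $\lambda\stackrel{d}\sim|\Lambda|^{-2\gamma/(2\gamma\alpha+d)}$ turns this into a constant times the rate in the statement times $\log\frac{6}{\delta}$. For the fitting term, $\nu_g\ge\alpha/2\ge1/2$ gives $\min\{\alpha/2,\nu_g\}=\alpha/2$ and $\min\{1/2,\nu_g\}=1/2$, so it is at most $\lambda^{(\alpha-\beta)/2}+C_9\lambda^{(1-\beta)/2}s^{-\gamma}\mathbb I_{\alpha>2}$; after substituting the values of $\lambda$ and $s$, the first summand is exactly of the claimed order, while the second is strictly subdominant — the $|\Lambda|$-exponent of the ratio $\lambda^{(1-\alpha)/2}s^{-\gamma}$, namely $\gamma\bigl(\tfrac{\alpha-1}{2\gamma\alpha+d}-\tfrac1d\bigr)$, is negative since $\gamma>d/2$ forces $d-2\gamma<0$. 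Summing the two contributions and absorbing every $|\Lambda|$-, $\lambda$- and $\delta$-independent constant into $\bar{C}_3'$ (which then depends only on $\|f^*\|_\varphi$, $b$, $\kappa$, $\tilde{c}$, $\alpha$, $\beta$, $\gamma$, $d$, $\tau$, $c_*$, $c_\diamond$ and the filter constants) yields the claim with confidence $1-\delta$.

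No individual estimate is delicate; the main obstacle is the bookkeeping that verifies the single choice $\lambda\stackrel{d}\sim|\Lambda|^{-2\gamma/(2\gamma\alpha+d)}$ simultaneously keeps $\mathcal R_{\Lambda,\lambda,W_s,1/2,1/2}$ below $1/2$, matches the order of the noise term $\lambda^{-\beta/2}\mathcal P_{D,\lambda,W_s,1}$ to that of the approximation term $\lambda^{(\alpha-\beta)/2}$, and leaves the auxiliary quadrature contribution $\lambda^{(1-\beta)/2}s^{-\gamma}\mathbb I_{\alpha>2}$ subdominant; the small-$|\Lambda|$ reduction also needs a routine but genuine argument so that the threshold $N_0$ need not be tracked explicitly.
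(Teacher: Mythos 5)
Your proposal follows essentially the same route as the paper's own proof: invoke the mis-specified bound \eqref{Approximation-operator-represen-large} from Theorem~\ref{Theorem:operator-representation}, control $\mathcal R_{\Lambda,\lambda,W_s,\cdot,\cdot}$ and $\mathcal P_{D,\lambda,W_s,1}$ via Corollary~\ref{Cor:quasi-uniform-integral}, and substitute $s=c_\diamond|\Lambda|^{1/d}$ and $\lambda\stackrel{d}\sim|\Lambda|^{-2\gamma/(2\gamma\alpha+d)}$; you are in fact more careful than the paper in verifying the standing hypothesis $\mathcal R_{\Lambda,\lambda,W_s,1/2,1/2}\le\tilde c<1/2$ and in disposing of small $|\Lambda|$. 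One caveat on the bookkeeping you describe as routine: the substitution gives $\lambda^{-\beta/2}\mathcal P_{D,\lambda,W_s,1}\lesssim|\Lambda|^{-\frac{(\alpha-\beta)\gamma}{2\alpha\gamma+d}}\log\frac6\delta$ and $\lambda^{(\alpha-\beta)/2}=|\Lambda|^{-\frac{(\alpha-\beta)\gamma}{2\alpha\gamma+d}}$, i.e.\ the exponent carries no factor $2$ in the numerator, consistent with Theorem~\ref{Theorem:Lepskii} and with the lower bound \eqref{Lower-bound} at $\beta=0$, so your assertion that these terms are ``exactly of the claimed order'' is not literally true for the rate as printed; the discrepancy originates in the corollary's statement (apparently a typo), and your argument, like the paper's, actually establishes the rate $|\Lambda|^{-\frac{(\alpha-\beta)\gamma}{2\alpha\gamma+d}}\log\frac6\delta$.
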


\section{Lepskii-type Principle based on Operator Representations}\label{Sec:parameter}

Different from KI, WSFA introduces a tunable parameter to improve  its stability without sacrificing  its fitting performance as discussed in Corollary \ref{Corollary:app-rate-quasi}. In this section, we utilize the   operator representations in Theorem \ref{Theorem:operator-representation} and operator differences in Corollary \ref{Cor:quasi-uniform-integral}  to provide a Lepskii-type principle  to determine the filter  parameter $\lambda$ with theoretical guarantees. Our idea is motivated by \cite{de2010adaptive,lu2020balancing,blanchard2019lepskii,lin2024adaptive} for random sample, but the detailed designations and proof skills are totally different due to the exclusive integral operator approach for spherical data.
Without loss of generality, we only study the well-specified case, i.e.,$\alpha>1$, for quasi-uniform data with D-type quadrature rules. Other cases  can be done by using the same method.
 Our basic idea of the Lepskii-type principle is to use   computational quantities concerning stability to bound
the difference between two successive estimators. 
To this end,  we present the following proposition.

\begin{proposition}\label{Proposition:stopping-rule}
 Let $\hat{\phi}_k\stackrel{d}\sim k^{-2\gamma}$ with $\gamma>d/2$ and $\phi$,  $\varphi$ satisfy \eqref{kernel-relation} for   $\alpha\geq1$. 
For any $\lambda,\lambda'$ satisfying $\mathcal R_{\Lambda,\lambda,W,1/2,1/2}, \mathcal R_{\Lambda,\lambda',W,1/2,1/2} \leq \tilde{c}<1/2$ and $\lambda>\lambda'$, if Assumption \ref{Assumption:data} and Assumption \ref{Assumption:filter} hold with $\nu_g\geq \frac{\alpha}{2}$, 
 then    
\begin{eqnarray}\label{stop-1111}
   &&\left\|(  L_{\phi,\Lambda,W}+\lambda I)^{1/2} ( f_{D,W,\lambda}-f_{D,W,\lambda'})\right\|_\phi \nonumber\\
     &\leq&
      \bar{C}_5 (\lambda^{\min\{\frac{\alpha}{2},\nu_g\}}+\lambda^{\min\{\frac{1}{2},\nu_g\}}\mathcal R_{\Lambda,\lambda,W,0,0}\mathbb I_{\alpha>2} )
      +
      8b(1-\tilde{c})^{-3/2} \mathcal P_{D,\lambda',W,1},
 \end{eqnarray}
where $\bar{C}_5:=4\tilde{C}_0 (1-\tilde{c})^{-5/2}\max\{1,(\alpha-1) \kappa^{\alpha-2}\}$.
\end{proposition}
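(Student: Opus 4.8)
The plan is to reduce the claimed bound on $\|(L_{\phi,\Lambda,W}+\lambda I)^{1/2}(f_{D,W,\lambda}-f_{D,W,\lambda'})\|_\phi$ to the operator representation of Theorem~\ref{Theorem:operator-representation} applied twice. First I would write $f_{D,W,\lambda}-f_{D,W,\lambda'} = (f_{D,W,\lambda}-f^\diamond_{D,W,\lambda}) - (f_{D,W,\lambda'}-f^\diamond_{D,W,\lambda'}) + (f^\diamond_{D,W,\lambda}-f^\diamond_{D,W,\lambda'})$, where the noiseless surrogates $f^\diamond$ are governed by the fitting-error part of Theorem~\ref{Theorem:operator-representation} and the noise-dependent differences by $g_\lambda(L_{\phi,\Lambda,W})S^T_{D,W}\vec\varepsilon_W$ as in \eqref{Analysis-decomposition}. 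Alternatively — and this is probably cleaner — split directly as in \eqref{Analysis-decomposition} into a data term $\mathcal I_{D,g,\lambda}f^*$ and a noise term, and estimate each in the $(L_{\phi,\Lambda,W}+\lambda I)^{1/2}$-weighted norm at the two parameters separately, using that $\lambda>\lambda'$ so the weight at $\lambda$ dominates.

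The key technical step is to control the weighted norm by a norm already bounded in the paper. Since $\lambda>\lambda'$, I would use $(L_{\phi,\Lambda,W}+\lambda I)^{1/2} \preceq \sqrt{2}\,(L_{\phi,\Lambda,W}+\lambda' I)^{1/2}$ only loosely, or better, pass to the population operator: by Proposition~\ref{Prop:operator-sampling}, $\mathcal Q^*_{\Lambda,\lambda,W}=\|(L_\phi+\lambda I)^{-1/2}(L_{\phi,\Lambda,W}+\lambda I)^{1/2}\|_{\phi\to\phi}\le\sqrt{(1-\tilde c)/(1-2\tilde c)}$, so it suffices to bound $\|(L_\phi+\lambda I)^{1/2}(f_{D,W,\lambda}-f_{D,W,\lambda'})\|_\phi$. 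For the data part I would invoke the fitting-error estimates from the proof of Theorem~\ref{Theorem:operator-representation}: for $\alpha\ge 1$, $\|(L_\phi+\lambda I)^{1/2}(\mathcal I_{D,g,\lambda}f^*-f^*)\|_\phi \lesssim \lambda^{\min\{\alpha/2,\nu_g\}}+\lambda^{\min\{1/2,\nu_g\}}\mathcal R_{\Lambda,\lambda,W,0,0}\mathbb I_{\alpha>2}$, with the constant $\bar C_5$ arising from $\tilde C_0$, the qualification bound \eqref{condition2}, the factor $\mathcal Q^*_{\Lambda,\lambda,W}$ (hence powers of $(1-\tilde c)^{-1}$), and the source-condition constant $\max\{1,(\alpha-1)\kappa^{\alpha-2}\}$ coming from expanding $L_\phi^{(\alpha-1)/2}$ in terms of $L_{\phi,\Lambda,W}$; the same bound at $\lambda'<\lambda$ is no larger since the fitting error is increasing in the parameter, so the two data contributions combine into a single such term. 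For the noise part, I would bound $\|(L_\phi+\lambda I)^{1/2}g_\lambda(L_{\phi,\Lambda,W})S^T_{D,W}\vec\varepsilon_W\|_\phi$ by inserting $(L_{\phi,\Lambda,W}+\lambda I)^{\pm1/2}$, using $\mathcal Q^*$ and $\mathcal Q$ from Proposition~\ref{Prop:operator-sampling} together with the filter bounds \eqref{condition1} (which give $\|(L_{\phi,\Lambda,W}+\lambda I)^{1/2}g_\lambda(L_{\phi,\Lambda,W})(L_{\phi,\Lambda,W}+\lambda I)^{1/2}\|\le$ const via $|g_\lambda(\sigma)(\sigma+\lambda)|\le b+1$), and finally $\|(L_{\phi,\Lambda,W}+\lambda I)^{-1/2}S^T_{D,W}\vec\varepsilon_W\|_\phi \le \mathcal Q\cdot\|(L_\phi+\lambda I)^{-1/2}S^T_{D,W}\vec\varepsilon_W\|_\phi = \mathcal Q\cdot\mathcal P_{D,\lambda,W,1}$. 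Since $\lambda'<\lambda$ and $\mathcal P_{D,\lambda',W,1}\ge\mathcal P_{D,\lambda,W,1}$ (the weight $(L_\phi+\lambda I)^{-1/2}$ is larger for smaller parameter), the dominant noise term is $\mathcal P_{D,\lambda',W,1}$, and tracking the constants $b$, $(1-\tilde c)^{-1}$ through $\mathcal Q,\mathcal Q^*$ twice yields exactly the $8b(1-\tilde c)^{-3/2}$ prefactor.

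The main obstacle I anticipate is bookkeeping the constants precisely enough to land on $\bar C_5 = 4\tilde C_0(1-\tilde c)^{-5/2}\max\{1,(\alpha-1)\kappa^{\alpha-2}\}$ and $8b(1-\tilde c)^{-3/2}$ rather than merely an $O(\cdot)$ bound: this forces one to be careful about which of $\mathcal Q$, $\mathcal Q^*$, $(1-\tilde c)^{-(1-\beta)/2}$ appear, how many times, and whether the crude bound $(L_{\phi,\Lambda,W}+\lambda I)^{1/2}\le(L_\phi+\lambda I)^{1/2}\mathcal Q^*$ is tight enough. A secondary subtlety is the $\alpha>2$ case, where the source condition $f^*=\mathcal L_\phi^{(\alpha-1)/2}h$ (Lemma~\ref{Lemma:source-condition}) must be transferred from $L_\phi$ to $L_{\phi,\Lambda,W}$, producing the extra $\mathcal R_{\Lambda,\lambda,W,0,0}\mathbb I_{\alpha>2}$ factor and the $(\alpha-1)\kappa^{\alpha-2}$ Lipschitz constant of $t\mapsto t^{(\alpha-1)/2}$ on $[0,\kappa]$; I would handle this exactly as in the proof of \eqref{Approximation-operator-represen-large}. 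Everything else is a routine chaining of the already-established operator-norm estimates, so no genuinely new inequality is needed — the proposition is essentially a corollary of Theorem~\ref{Theorem:operator-representation} with the weight moved inside and the two parameters bounded against the larger one.
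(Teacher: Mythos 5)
Your proposal is correct and follows essentially the same route as the paper: the paper also inserts $f^*$ by the triangle inequality, converts the $(L_{\phi,\Lambda,W}+\lambda I)^{1/2}$-weighted norm to the population-weighted norm via Proposition~\ref{Prop:operator-sampling}, applies the intermediate estimate \eqref{error-pre-sobolev} from the proof of Theorem~\ref{Theorem:operator-representation} at both $\lambda$ and $\lambda'$, and uses the monotonicity of the fitting error (increasing in the parameter) and of $\mathcal P_{D,\cdot,W,1}$ (decreasing) to collapse the two contributions into the stated bound. You even correctly flag that the operator needed is $\mathcal Q^*_{\Lambda,\lambda,W}$ rather than $\mathcal Q_{\Lambda,\lambda,W}$, and correctly trace the $\alpha>2$ term to the Lipschitz transfer of $L_\phi^{(\alpha-1)/2}$ to $L_{\phi,\Lambda,W}^{(\alpha-1)/2}$, exactly as in the paper.
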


Comparing \eqref{stop-1111} with \eqref{Approximation-operator-represen-large}, we get that the trends of fitting errors and stability errors are the same for $\left\|(  L_{\phi,D,W}+\lambda I)^{1/2} ( f_{D,W,\lambda}-f_{D,W,\lambda'})\right\|_\phi$ and $\|f^\diamond_{D,W,\lambda}-f^*\|_{\psi}$. The only  difference is that $ \left\|(  L_{\phi,D,W}+\lambda I)^{1/2} ( f_{D,W,\lambda}-f_{D,W,\lambda'})\right\|_\phi$ is implementable but $\|f^\diamond_{D,W,\lambda}-f^*\|_{\psi}$ cannot be computed. Indeed,
 for any   $f=\sum_{i=1}^{|\lambda|}a_i\phi_{x_i}$, there holds
\begin{eqnarray*}
  \|(L_{\phi,\Lambda,W}+\lambda I)^{1/2}f\|^2_\phi
  = 
\langle(L_{\phi,\Lambda,W}+\lambda I)f,f\rangle_\phi
=\langle  L_{\phi,\Lambda,W}f,f\rangle_\phi
+
\lambda \langle  f,f\rangle_\phi,
\end{eqnarray*}
which together with $L_{\phi,\Lambda,W}=S_{D,W}^TS_{D,W}$ and \eqref{Reproducing-property} follows
\begin{eqnarray*}
     \|(L_{\phi,\Lambda,W}+\lambda I)^{1/2}f\|^2_\phi
    &=&
    \langle S_{D,W}f,S_{D,W}f\rangle_{\mathbb R^d}+\sum_{i=1}^{|\Lambda|}\sum_{i'=1}^{|\Lambda|}a_ia_{i'}\phi(x_i,x_{i'})\\
    &=&
\sum_{i=1}^{|\Lambda|}w_i(f(x_i))^2+\sum_{i=1}^{|\Lambda|}\sum_{i'=1}^{|\Lambda|}a_ia_{i'}\phi(x_i,x_{i'}).
\end{eqnarray*}
If   $\Lambda$ is $\tau$-quasi-uniform and  $\mathcal Q_{\Lambda,s}:=\{(w_{i,s},  x^*_{i}): w_{i,s}> 0
\hbox{~and~}   x^*_{i}\in \Lambda\}$ is a D-type quadrature rule  of order $s=c_\diamond |\Lambda|^{1/d}$ and coefficient $c_*>0$. Recalling Proposition \ref{Proposition:value-difference-deterministic}, if $\varepsilon_i$ is a random sub-Gaussian noise, then with confidence $1-\delta$ for $\delta\in(0,1)$, there holds
$$ 
   \mathcal P_{D,\lambda,W_s,1} 
     \leq     
     2c_*^2C_6\lambda^{-d/(4\gamma)} |\Lambda|^{-1/2}\log\frac6\delta, 
$$
showing that the stability error is also implementable since $\gamma$ is known provided the kernel is given \cite{narcowich2007approximation}. Noting further the best $\lambda$ is achieved when the fitting error is comparable with the stability error and is increasing with respect to $\lambda$, we then can determine $\lambda$ by comparing $\left\|(  L_{\phi,\Lambda,W_s}+\lambda I)^{1/2} ( f_{D,W_s,\lambda}-f_{D,W_s,\lambda'})\right\|_\phi $ and $ C_6\lambda^{-d/(4\gamma)} \left(\sum_{i=1}^{|\Lambda|}w_{i,s}^2+w_{\max,s}^2\right)^{1/2} \log\frac6\delta$. With these, we are in a position to give the Lepskii-type principle for WSFA.

For $0<q<1$ and  $q_0>0$, define
\begin{equation}\label{Def.k}
    K_q:=\log_q\frac{\sqrt{3C_9s^{-\gamma}}}{q_0}
\end{equation}
with $C_9$ given in Corollary \ref{Cor:quasi-uniform-integral}. Then for any  $\lambda_k=q_0q^k$ with $k\leq K_q$, it follows from Corollary \ref{Cor:quasi-uniform-integral} that  
\begin{equation}\label{bound-rrrrrr}
    \mathcal R_{\Lambda,\lambda_k,W_s,1/2,1/2}.
\end{equation}
Let $\hat{k}$ be the first (or largest) integer of $K_q,K_q-1,\dots,2,1$ satisfying
\begin{equation}\label{Lepskii-1}
    \left\|(  L_{\phi,\Lambda,W_s}+\lambda_k I)^{1/2} ( f_{D,W_s,\lambda_k}-f_{D,W_s,\lambda_{k-1}})\right\|_\phi
    \leq C_{LP}\lambda^{-d/(4\gamma)} |\Lambda|^{-1/2}\log\frac6\delta,
\end{equation}
 where  $C_{LP}:=32\bar{C}_5c_*^2C_6b(3/2)^{-3/2}$ and
 $C_6$ is specified in the proof of Proposition \ref{Proposition:value-difference-deterministic}. If there is not any $k\in[1,K_q]$ satisfying \eqref{Lepskii-1}, set $\hat{k}=K_q$. We then derive the following theorem to show the feasibility of the proposed parameter selection strategy.
\begin{theorem}\label{Theorem:Lepskii}
Let $\delta\in(0,1). $ Under  conditions of Corollary \ref{Corollary:large-optimal},  with confidence $1-\delta$, 
there holds
 \begin{equation}\label{approxiamtion-Lepskii}
     \|f_{D,W_s, {\lambda}_{\hat{k}}}-f^*\|_{\psi}  
   \leq \bar{C}_6|\Lambda|^{-\frac{(\alpha-\beta)\gamma }{2\gamma\alpha+d}} \log\frac{3}{\delta}, 
 \end{equation}
where $\bar{C}_6$ is a constant independent of $|\Lambda|,\delta$.
\end{theorem}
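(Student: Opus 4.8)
The plan is to establish \eqref{approxiamtion-Lepskii} via a standard Lepskii balancing argument, but carried out entirely in terms of the integral operator quantities of Theorem~\ref{Theorem:operator-representation} and the deterministic bounds of Corollary~\ref{Cor:quasi-uniform-integral}. First I would fix the high-probability event on which all of the operator differences are controlled: by Corollary~\ref{Cor:quasi-uniform-integral}, with confidence $1-\delta$ we simultaneously have, for every $\lambda_k=q_0q^k$ with $k\le K_q$, the bound $\mathcal R_{\Lambda,\lambda_k,W_s,1/2,1/2}\le \sqrt{3C_9 s^{-\gamma}}\le \tilde c<1/2$ (this is exactly why $K_q$ is defined as in \eqref{Def.k}, since $\lambda_k\ge q_0 q^{K_q}$ forces $\mathcal R\le\tilde c$), as well as $\mathcal R_{\Lambda,\lambda_k,W_s,0,0}\le C_9 s^{-\gamma}$ and $\mathcal P_{D,\lambda_k,W_s,1}\le C_9|\Lambda|^{-1/2}\lambda_k^{-d/(4\gamma)}\log\frac6\delta$. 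On this event every estimator $f_{D,W_s,\lambda_k}$, $1\le k\le K_q$, is well defined and Proposition~\ref{Proposition:stopping-rule} and Theorem~\ref{Theorem:operator-representation} both apply with the stated constants; I would absorb all $s$-independent constants (using $s=c_\diamond|\Lambda|^{1/d}$, hence $s^{-\gamma}\stackrel{d}\sim|\Lambda|^{-\gamma/d}$) into a single generic constant.

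Next I would introduce the oracle index. Let $\lambda_*\stackrel{d}\sim|\Lambda|^{-\frac{2\gamma}{2\gamma\alpha+d}}$ be the choice in Corollary~\ref{Corollary:large-optimal}; choose $k_*$ to be the largest $k\le K_q$ with $\lambda_k\ge\lambda_*$, so $\lambda_{k_*}\stackrel{d}\sim\lambda_*$ and (one checks $\lambda_*\ge q_0 q^{K_q}$ for $|\Lambda|$ large, so $k_*$ exists and $k_*\le K_q$). The two ingredients I would then combine are: (i) the \emph{a priori} error bound, which by Theorem~\ref{Theorem:operator-representation}\,\eqref{Approximation-operator-represen-large} together with the operator-difference bounds gives, for every $k\le k_*$,
\begin{equation*}
   \|f^\diamond_{D,W_s,\lambda_k}-f^*\|_\psi \le \bar C \,\lambda_k^{-\beta/2}\bigl(\lambda_k^{\min\{\alpha/2,\nu_g\}}+\lambda_k^{\min\{1/2,\nu_g\}}\mathcal R_{\Lambda,\lambda_k,W_s,0,0}\mathbb I_{\alpha>2}+\mathcal P_{D,\lambda_k,W_s,1}\bigr)=:\varPhi(\lambda_k),
\end{equation*}
where $\varPhi$ has a fitting part increasing in $\lambda$ and a stochastic part $\lambda^{-\beta/2}\mathcal P_{D,\lambda,W_s,1}\stackrel{d}\sim\lambda^{-\beta/2-d/(4\gamma)}|\Lambda|^{-1/2}\log\frac6\delta$ decreasing in $\lambda$; and (ii) the two-estimator comparison of Proposition~\ref{Proposition:stopping-rule}, which controls $\|(L_{\phi,\Lambda,W_s}+\lambda_k I)^{1/2}(f_{D,W_s,\lambda_k}-f_{D,W_s,\lambda_{k-1}})\|_\phi$ by the same fitting part plus a constant multiple of $\mathcal P_{D,\lambda_{k-1},W_s,1}$. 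The constant $C_{LP}$ in \eqref{Lepskii-1} is rigged precisely so that the right-hand side of \eqref{Lepskii-1}, namely $C_{LP}\lambda_k^{-d/(4\gamma)}|\Lambda|^{-1/2}\log\frac6\delta$, dominates $8b(1-\tilde c)^{-3/2}\mathcal P_{D,\lambda_{k-1},W_s,1}$ (note $\lambda_{k-1}=\lambda_k/q$, so $\mathcal P_{D,\lambda_{k-1},W_s,1}\le q^{d/(4\gamma)}\cdot(\text{same with }\lambda_k)$, absorbed into the constant). Consequently, for all $k\le k_*$ the fitting part is $\lesssim$ the stochastic part evaluated at $\lambda_{k_*}$, so \eqref{Lepskii-1} holds for every such $k$; this is the ``oracle passes the test'' step and it yields $\hat k\ge k_*$.

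Then I would run the usual Lepskii chaining. Since $\hat k\ge k_*$, telescoping and the defining inequality \eqref{Lepskii-1} at indices $k_*+1,\dots,\hat k$ give
\begin{equation*}
   \|(L_{\phi,\Lambda,W_s}+\lambda_{k_*}I)^{1/2}(f_{D,W_s,\lambda_{\hat k}}-f_{D,W_s,\lambda_{k_*}})\|_\phi
   \le \sum_{k=k_*+1}^{\hat k} \|(L_{\phi,\Lambda,W_s}+\lambda_k I)^{1/2}(f_{D,W_s,\lambda_k}-f_{D,W_s,\lambda_{k-1}})\|_\phi
   \le C_{LP}\,|\Lambda|^{-1/2}\log\tfrac6\delta\sum_{k>k_*}\lambda_k^{-d/(4\gamma)},
\end{equation*}
where I have used $\mathcal Q^*_{\Lambda,\lambda_{k_*},W_s}$ from Proposition~\ref{Prop:operator-sampling} to replace $(L_{\phi,\Lambda,W_s}+\lambda_k I)^{1/2}$ by $(L_{\phi,\Lambda,W_s}+\lambda_{k_*}I)^{1/2}$ for $k\ge k_*$ at the cost of a bounded factor $\sqrt{(1-\tilde c)/(1-2\tilde c)}$; the geometric sum is $\stackrel{d}\sim \lambda_{k_*}^{-d/(4\gamma)}$ since $q<1$. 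Converting the $(L_{\phi,\Lambda,W_s}+\lambda_{k_*}I)^{1/2}$-norm to the $\|\cdot\|_\psi$-norm via \eqref{sampling-inequality-sobolev-1.1}--\eqref{Holder-type-norm} (with $\lambda_{k_*}$, picking up $\lambda_{k_*}^{-\beta/2}$), this bounds $\|f_{D,W_s,\lambda_{\hat k}}-f_{D,W_s,\lambda_{k_*}}\|_\psi$ by $\bar C\lambda_{k_*}^{-\beta/2-d/(4\gamma)}|\Lambda|^{-1/2}\log\frac6\delta$, which with $\lambda_{k_*}\stackrel{d}\sim|\Lambda|^{-\frac{2\gamma}{2\gamma\alpha+d}}$ equals $\bar C|\Lambda|^{-\frac{(\alpha-\beta)\gamma}{2\gamma\alpha+d}}\log\frac6\delta$ after arithmetic on the exponents (this is the same balancing that makes the oracle rate in Corollary~\ref{Corollary:large-optimal} come out; I would verify $\frac{2\gamma}{2\gamma\alpha+d}\cdot(\frac\beta2+\frac{d}{4\gamma})+\frac12=\frac{(\alpha-\beta)\gamma}{2\gamma\alpha+d}$). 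Finally, the triangle inequality
\begin{equation*}
   \|f_{D,W_s,\lambda_{\hat k}}-f^*\|_\psi \le \|f_{D,W_s,\lambda_{\hat k}}-f_{D,W_s,\lambda_{k_*}}\|_\psi + \|f_{D,W_s,\lambda_{k_*}}-f^*\|_\psi
\end{equation*}
combines the above with Corollary~\ref{Corollary:large-optimal} applied at $\lambda_{k_*}$ to give \eqref{approxiamtion-Lepskii}; I would also note that when $\hat k<K_q$ was forced (no $k$ satisfies \eqref{Lepskii-1}) the bound is degenerate and need not be treated, while the case $\hat k<k_*$ is excluded by the oracle step. I expect the main obstacle to be the bookkeeping in the oracle step: showing cleanly that \eqref{Lepskii-1} holds for all $k\le k_*$ requires that the fitting error $\lambda_k^{-\beta/2}(\lambda_k^{\min\{\alpha/2,\nu_g\}}+\cdots)$ be dominated by $C_{LP}\lambda_k^{-d/(4\gamma)}|\Lambda|^{-1/2}\log\frac6\delta$ for \emph{every} $\lambda_k\ge\lambda_*$, which is where the monotonicity of the fitting part and the choice $\nu_g\ge\alpha/2$ (so $\min\{\alpha/2,\nu_g\}=\alpha/2$, and the $\mathbb I_{\alpha>2}$ term is lower-order by the extra $\mathcal R_{\Lambda,\lambda,W_s,0,0}\lesssim s^{-\gamma}$ factor) must be used carefully, together with the fact that $\lambda\mapsto\lambda^{-d/(4\gamma)}$ is decreasing so the constraint is tightest at $\lambda_k=\lambda_*$ — exactly where the oracle rate balances — so a sufficiently large absolute constant $C_{LP}$ (which is what the definition provides) closes it.
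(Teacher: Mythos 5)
Your outline assembles the right ingredients (the a priori bound \eqref{Approximation-operator-represen-large}, the two-estimator comparison of Proposition \ref{Proposition:stopping-rule}, an oracle index, chaining, and the norm conversion via \eqref{Holder-type-norm}), but the directional bookkeeping of the Lepskii argument is reversed in two places, and the errors compound so that the proof does not close. First, the ``oracle passes the test'' step: the fitting part in \eqref{stop-1111} is $\lambda^{\min\{\alpha/2,\nu_g\}}+\lambda^{\min\{1/2,\nu_g\}}\mathcal R_{\Lambda,\lambda,W,0,0}\mathbb I_{\alpha>2}$, which is \emph{increasing} in $\lambda$, while the threshold $C_{LP}\lambda_k^{-d/(4\gamma)}|\Lambda|^{-1/2}\log\frac6\delta$ is \emph{decreasing} in $\lambda$. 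Hence \eqref{Lepskii-1} is guaranteed (on the good event of Corollary \ref{Cor:quasi-uniform-integral}) precisely for those $k$ with $\lambda_k$ at most of the order of the balancing value, i.e.\ $k\ge k_*$, and there is no guarantee at all for $k<k_*$, where the fitting error dominates the threshold. Your claim that ``for all $k\le k_*$ the fitting part is $\lesssim$ the stochastic part at $\lambda_{k_*}$'' has this monotonicity backwards, so the inclusion $\hat k\ge k_*$ is not established by your route; and even granting it, the definition of $\hat k$ only certifies \eqref{Lepskii-1} at the single index $\hat k$, so invoking it at every intermediate index $k_*+1,\dots,\hat k$ requires a separate argument. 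Second, and more damaging, the chaining sum $\sum_{k=k_*+1}^{\hat k}\lambda_k^{-d/(4\gamma)}$ with $\lambda_k=q_0q^k$ and $0<q<1$ is a geometric series with ratio $q^{-d/(4\gamma)}>1$: it is dominated by its \emph{last} term and is of order $\lambda_{\hat k}^{-d/(4\gamma)}$, not $\lambda_{k_*}^{-d/(4\gamma)}$. Since $\lambda_{\hat k}$ can be as small as $q_0q^{K_q}$, which by \eqref{Def.k} is of order $s^{-\gamma/2}$, this bound is in general far worse than the target rate, so chaining from the oracle \emph{down} to a smaller $\lambda_{\hat k}$ cannot produce \eqref{approxiamtion-Lepskii}.

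The paper's proof runs in the opposite direction and keeps both cases. It fixes $k_0$ with $\lambda_{k_0}$ of order $|\Lambda|^{-2\gamma/(2\alpha\gamma+d)}$ and splits on the sign of $k_0-\hat k$. On the under-regularized side ($\lambda_{\hat k}\le\lambda_{k_0}$) it uses the \emph{failure} of \eqref{Lepskii-1} at index $\hat k+1$ together with Proposition \ref{Proposition:stopping-rule} to force the stochastic term $\lambda_{\hat k}^{-d/(4\gamma)}|\Lambda|^{-1/2}$ to be dominated by the fitting term at $\lambda_{\hat k+1}\le\lambda_{k_0}$, hence by $\lambda_{k_0}^{\alpha/2}$; no chaining is needed there. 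On the over-regularized side ($\lambda_{\hat k}>\lambda_{k_0}$) it telescopes from $\hat k$ \emph{up} to $k_0$, so the geometric sum $\sum_{k=\hat k+1}^{k_0}\lambda_k^{-d/(4\gamma)}$ is dominated by its term at $k_0$ and yields exactly $\lambda_{k_0}^{-d/(4\gamma)}|\Lambda|^{-1/2}$, i.e.\ the optimal order $|\Lambda|^{-\alpha\gamma/(2\alpha\gamma+d)}$, after which \eqref{error-pre-sobolev} at $\lambda_{k_0}$ and the triangle inequality finish the argument, followed by the conversion to the $\psi$-norm via \eqref{Holder-type-norm}. To repair your proposal you would need to (i) prove that the test holds for all $k\ge k_*$ rather than $k\le k_*$, (ii) retain, rather than exclude, the case in which $\hat k$ and the oracle index fall on opposite sides, and (iii) chain only toward the oracle from the larger-$\lambda$ side, where the geometric sum collapses onto the oracle term.
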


Comparing Theorem \ref{Theorem:Lepskii} with Corollary \ref{Corollary:large-optimal}, we show that the proposed Lepskii-type principle in \eqref{Lepskii-1} makes WSFA achieve
 the optimal approximation rates   by noticing \eqref{Lower-bound}. 
Parameter selection of Tikhonov regularization for spherical data fitting has already been considered in \cite{hesse2017radial}, where four types of strategies such as  the well known Morozov discrepancy principle were proposed and corresponding approximation rates were derived to demonstrate the feasibility of these strategies. However, it should be highlighted that the noise considered in \cite{hesse2017radial} is extremely small and the error analysis were only carried out in the $L^2$-metric.  In our recent work \cite{liu2024weighted}, a weighted cross-validation approach was developed to determine the filter parameter of WSFA. There are mainly three differences between our results and those in \cite{liu2024weighted}. At first, the weighted cross-validation in \cite{liu2024weighted} requires to divide the sample into training and validation sets while our proposed Lepskii principle can fully use the whole data. Due to the lack of concentration inequalities, the weighted cross-validation in \cite{liu2024weighted} needs to use the validation sets to generate a quadrature rule of D-type, which naturally requires the validation set  to be quasi-uniform and needs additional computation.
Then, the weighted cross-validation in \cite{liu2024weighted} is only available to $L^2$ norm while our results also hold for Sobolev norms.  Finally, the derived approximation errors in \cite{liu2024weighted} are built upon the assumption $\gamma>3d/2$ while  our results only requires $\gamma>d/2$, theoretically exhibiting the advantage of the Lepskii-type principle over weighted cross-validation.

\section{Divide-and-Conquer Scheme for Computation-Reduction}\label{sec:Dis}
As shown in Table \ref{Tab:comparison},  WSFA  requires at least $\mathcal O(|\Lambda|^2)$  and generally $\mathcal O(|\Lambda|^3)$ complexities in computation and storage. In this section, we introduce a divide-and-conquer (D-C) scheme to reduce complexities of   WSFA. The algorithm mainly contains three steps. 

 \begin{itemize}
     \item 
 {\it Step 1. Data set decomposition:} For a fixed $1 \le J \le |\Lambda|$, decompose the data set $D$ into $J$  disjoint subsets $D_j:=\{(x_{i,j},y_{i,j}\}_{i=1}^{|\Lambda_j|}$ so that $D=\bigcup_{j=1}^JD_j$, $D_j\cap D_{j'}=\varnothing$ for $j\neq j'$, and the input set $\Lambda_j$ of $D_j$ is $\tau_j$-quasi-uniform for some $\tau_j\geq 2$ depending only on $\tau,d$. The detailed implementation for the decomposition of a set of scattered data to $J$ $\tau_j$-quasi-uniform set   can be found in \cite{lin2023dis}. For each $\tau$-quasi-uniform subset, generate a D-type quadrature rule $\mathcal Q_{\Lambda_j,s_j}=\{(w_{i,s_j},  x_{i,j}):
\hbox{~and~}   x_{i,j}\in \Lambda_j\}$  of order $s_j=c_\diamond|\Lambda_j|^{1/d}$ and with coefficient  $c_*$.

\item  {\it Step 2. Local processing:} On each subset $D_j$,  implement \eqref{WSFA} and obtain a local estimator
\begin{equation}\label{local estimator}
    f_{D_j^*,W_{s_j},\lambda_j}=S^T_{\Lambda_j,W_{s_j}} g_{\lambda_j}(\Psi_{\Lambda_j^*,s_j})  {\bf y}_{D_j,W_{s_j}}  
\end{equation}
with $\lambda_j>0$.

\item {\it Step 3. Synthesization:} Build the global estimator by synthesizing local estimators via weighted averaging
\begin{equation}\label{global estimator}
    \bar{f}_{D,W_{\vec s},\vec{\lambda}}=\sum_{j=1}^{J}  \frac{|\Lambda_j|}{|\Lambda|}f_{D_j,W_{s_j},\lambda_j}.
\end{equation}
 \end{itemize}

Utilizing the D-C scheme, it is easy  to see that the 
complexities of WSFA are reduced from $\mathcal O(|\Lambda|^2)$ and $\mathcal O(|\Lambda|^3)$ to $\mathcal O(\sum_{j=1}^J|\Lambda_j|^2)$ and $\mathcal O(\sum_{j=1}^J|\Lambda_j|^3)$ in storage and computation, respectively. If  $|\Lambda_1|\sim\dots\sim |\Lambda_J|$, the above quantities become $\mathcal O(|\Lambda|^2/J)$ and $\mathcal O(|\Lambda|^3/J^2)$, showing that large $J$ brings additional benefits in computation.  In the following theorem, we show that the proposed D-C scheme does not degrade the approximation performance of WSFA, provided $J$ is not so large.


\begin{theorem}\label{Theorem:DS-optimal}
Let $\delta\in(0,1),$ $\hat{\phi}_k\stackrel{d}\sim k^{-2\gamma}$ with $\gamma>d/2$ and $\phi$  $\varphi$ satisfy \eqref{kernel-relation} for   $\alpha>1$. For any $j=1,\dots,J$, suppose that  $\Lambda_j=\{  x_{i,j}\}_{i=1}^{|\Lambda_j|}$ is $\tau_j$-quasi uniform, and $\mathcal Q_{\Lambda_j,s_j}=\{(w_{i,s_j},  x_{i,j}):
\hbox{~and~}   x_{i,j}\in \Lambda_j\}$ is a D-type quadrature rule with coefficient $c_*$ and $s=c_\diamond |\Lambda_j|^{1/d}$. If Assumption \ref{Assumption:data} and Assumption \ref{Assumption:filter} hold with $\nu_g\geq \frac{\alpha}{2}$, $\lambda\sim|\Lambda|^{-\frac{2\gamma}{2\gamma\alpha+d}}$, $|\Lambda_1|\sim\dots\sim|\Lambda_j|$, and
\begin{equation}\label{restrtion-on-J}
    J\leq \bar{C}_7|\Lambda|^{\frac{2\gamma\alpha}{2\gamma\alpha+d}},
\end{equation}
 then for any $0<\delta<1$,    with confidence $1-\delta$, 
there holds
\begin{equation}\label{Approximation-rate-distributed}
  E[\| \overline{f}_{D,W_{\vec{s}},\vec{\lambda}}-f^*\|_{\psi}^2] 
       \leq 
         \bar{C}_8|\Lambda|^{-\frac{2(\alpha-\beta)\gamma}{2\gamma\alpha+d}}, 
 \end{equation}
where $\bar{C}_7,\bar{C}_8$ are constants independent of $|\Lambda|,|\Lambda_j|,J,\lambda,s$.
\end{theorem}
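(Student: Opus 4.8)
The plan is to reduce the $L^2$ (or Sobolev) error of the synthesized estimator $\overline{f}_{D,W_{\vec{s}},\vec{\lambda}}$ to a bias–variance style decomposition where the variance part is damped by averaging. First I would introduce, for each block $j$, the noise-free local estimator $f^{\diamond}_{D_j,W_{s_j},\lambda}$ (the output of WSFA on the data $(x_{i,j},f^*(x_{i,j}))$) and write
\begin{equation*}
\overline{f}_{D,W_{\vec{s}},\vec{\lambda}}-f^*
= \Bigl(\sum_{j=1}^J \tfrac{|\Lambda_j|}{|\Lambda|}\bigl(f^{\diamond}_{D_j,W_{s_j},\lambda}-f^*\bigr)\Bigr)
 +\Bigl(\sum_{j=1}^J \tfrac{|\Lambda_j|}{|\Lambda|}\bigl(f_{D_j,W_{s_j},\lambda}-f^{\diamond}_{D_j,W_{s_j},\lambda}\bigr)\Bigr),
\end{equation*}
and take $\psi$-norms. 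The first (approximation/bias) term is handled blockwise by the triangle inequality and convexity together with the deterministic parts of Theorem~\ref{Theorem:operator-representation} (the case $\alpha>1$, giving $\lambda^{-\beta/2}(\lambda^{\min\{\alpha/2,\nu_g\}}+\lambda^{\min\{1/2,\nu_g\}}\mathcal R_{\Lambda_j,\lambda,W_{s_j},0,0}\mathbb I_{\alpha>2})$), with the operator differences $\mathcal R_{\Lambda_j,\lambda,W_{s_j},0,0}$ controlled via Corollary~\ref{Cor:quasi-uniform-integral} applied on each $\Lambda_j$; since $\lambda\sim|\Lambda|^{-2\gamma/(2\gamma\alpha+d)}$ and $s_j=c_\diamond|\Lambda_j|^{1/d}$ this gives $s_j^{-\gamma}\sim|\Lambda_j|^{-\gamma/d}$, and the constraint \eqref{restrtion-on-J} is exactly what is needed to ensure $\lambda\gtrsim s_j^{-2\gamma}$ so that $\mathcal R_{\Lambda_j,\lambda,W_{s_j},1/2,1/2}\le\tilde c<1/2$ holds on every block, legitimizing the use of Theorem~\ref{Theorem:operator-representation}.

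The second (variance) term is where the averaging pays off. Conditioning on the design points $\{\Lambda_j\}$, the block noise vectors $\vec\varepsilon_{W_{s_j}}$ are independent and mean-zero, and by the operator representation $f_{D_j,W_{s_j},\lambda}-f^{\diamond}_{D_j,W_{s_j},\lambda}=g_\lambda(L_{\phi,\Lambda_j,W_{s_j}})S^T_{\Lambda_j,W_{s_j}}\vec\varepsilon_{W_{s_j}}$ the local fluctuations are themselves mean-zero. Hence
\begin{equation*}
 E\Bigl[\bigl\|\textstyle\sum_j \tfrac{|\Lambda_j|}{|\Lambda|}(f_{D_j,\cdot}-f^{\diamond}_{D_j,\cdot})\bigr\|_\psi^2\Bigr]
 = \sum_j \tfrac{|\Lambda_j|^2}{|\Lambda|^2}\, E\bigl[\|f_{D_j,\cdot}-f^{\diamond}_{D_j,\cdot}\|_\psi^2\bigr],
\end{equation*}
and each summand is bounded, in expectation over the noise, by $\lambda^{-\beta}E[\mathcal P_{D_j,\lambda,W_{s_j},1}^2]$ up to constants coming from $\mathcal Q_{\Lambda_j,\lambda,W_{s_j}}$ and the filter bounds \eqref{condition1}; Proposition~\ref{Proposition:value-difference-deterministic} (or its corollary for D-type rules) gives $E[\mathcal P_{D_j,\lambda,W_{s_j},1}^2]\lesssim |\Lambda_j|^{-1}\lambda^{-d/(2\gamma)}$. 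Since $|\Lambda_1|\sim\cdots\sim|\Lambda_J|\sim|\Lambda|/J$, the sum collapses to $\sim \tfrac1J\cdot\tfrac{J}{|\Lambda|}\lambda^{-\beta-d/(2\gamma)}\sim |\Lambda|^{-1}\lambda^{-\beta-d/(2\gamma)}$, i.e.\ the \emph{same} variance bound as the global (undivided) estimator—this is the crux of why D-C does not degrade the rate. Plugging $\lambda\sim|\Lambda|^{-2\gamma/(2\gamma\alpha+d)}$ into both the squared bias $\lambda^{-\beta}(\lambda^{\alpha/2}+\cdots)^2\sim\lambda^{\alpha-\beta}$ wait—more precisely into $\lambda^{-\beta}\lambda^{\min\{\alpha,2\nu_g\}}$ and into the variance $|\Lambda|^{-1}\lambda^{-\beta-d/(2\gamma)}$, one checks both balance at $|\Lambda|^{-2(\alpha-\beta)\gamma/(2\gamma\alpha+d)}$, which is \eqref{Approximation-rate-distributed}.

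The main obstacle I anticipate is \textbf{not} the noise term—whose treatment is essentially the classical D-C argument—but rather making the \emph{blockwise} deterministic estimate uniform: Theorem~\ref{Theorem:operator-representation} requires $\mathcal R_{\Lambda_j,\lambda,W_{s_j},1/2,1/2}\le\tilde c<1/2$ for \emph{every} $j$ simultaneously, and $\mathcal R$ scales like $\lambda^{-1/2}s_j^{-\gamma}\sim\lambda^{-1/2}|\Lambda_j|^{-\gamma/d}$, so one must verify that the choice $\lambda\sim|\Lambda|^{-2\gamma/(2\gamma\alpha+d)}$ together with $|\Lambda_j|\sim|\Lambda|/J$ and the upper bound \eqref{restrtion-on-J} on $J$ keeps this quantity below $\tilde c$. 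Tracking this—and, in the $\alpha>2$ subcase, also controlling the extra $\lambda^{\min\{1/2,\nu_g\}}\mathcal R_{\Lambda_j,\lambda,W_{s_j},0,0}$ contribution to the bias so that it does not dominate $\lambda^{\alpha/2}$—is the delicate bookkeeping step. A secondary technical point is that the identity $E[\|\sum_j c_j(f_{D_j,\cdot}-f^{\diamond}_{D_j,\cdot})\|_\psi^2]=\sum_j c_j^2 E[\|\cdot\|_\psi^2]$ uses independence of the blocks' noise \emph{and} the fact that each local fluctuation has zero conditional mean in $\mathcal N_\psi$; this requires that the operator $g_\lambda(L_{\phi,\Lambda_j,W_{s_j}})S^T_{\Lambda_j,W_{s_j}}$ be measurable with respect to the design alone, which it is, so the cross terms vanish after taking conditional expectation and then expectation over the designs.
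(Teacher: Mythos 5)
Your proposal is correct and follows essentially the same route as the paper: the paper's Lemma~\ref{Lemma:Error-decomposition} (quoted from \cite{Feng2021radial}) is exactly your bias--variance split with the cross terms killed by independence and mean-zero noise across blocks, the blockwise bias is controlled via \eqref{Apprxo.1234}/Theorem~\ref{Theorem:operator-representation} and the variance via $\mathcal P_{D_j,\lambda,W_{s_j},1}$ with Corollary~\ref{Cor:quasi-uniform-integral}, and your identification of \eqref{restrtion-on-J} as the condition guaranteeing $\mathcal R_{\Lambda_j,\lambda,W_{s_j},1/2,1/2}\le\tilde c<1/2$ on every block matches the paper's reasoning.
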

 
  Comparing Theorem \ref{Theorem:DS-optimal} with Corollary \ref{Corollary:app-rate-quasi}, we find that  D-C-WSFA possesses the same optimal approximation rates as WSFA, showing the feasibility and efficiency of the  D-C scheme. Furthermore, the filter parameters to achieve the optimal approximation rates of WSFA and D-C-WSFA are the same. Therefore, we can use the proposed Lepskii-type principle  \eqref{Lepskii-1} to determine the parameter of D-C-WSFA.
  Recalling that there are only $|\Lambda_j|$ samples in the $j$-th data subset, such a selection of $\lambda_j$ enables the local estimator $f_{D_j^*,W_{s_j},\lambda_j}$ to be overfitted on the data, i.e., the fitting error is much smaller than the stability error, which can be derived from Theorem \ref{Theorem:operator-representation}   by setting $\lambda_j\sim|\Lambda|^{-\frac{2\gamma}{2\gamma\alpha+d}}$ directly. In fact, the best selection of filter parameter for the local estimator should be $\lambda_j\sim |\Lambda_j|^{-\frac{2\gamma}{2\gamma\alpha+d}}$. Fortunately, the overfitting of local estimates can be alleviated by weighted average, making the approximation rates of the global estimator similar as those for WSFA. The condition \eqref{restrtion-on-J} shows that the performance of weighted average in enhancing the stability is limited, if the whole data set are divided into   too many data subsets. In fact, for  $|\Lambda_1|\sim\dots\sim|\Lambda_j|$ and $J= \tilde{C}_1|\Lambda|^{\frac{2\gamma\alpha}{2\gamma\alpha+d}}$, there are totally $\mathcal O(|\Lambda|^{\frac{d}{2\gamma\alpha+d}})$ samples in each data subset. The optimal approximation error, even for noise-free data, is of order $|\Lambda|^{-\frac{\gamma\alpha}{2\gamma\alpha+d}}$ according to \cref{Lemma:Narcowich-interpolation}, implying the optimality of the restriction on $J$.

\section{Proofs}\label{Sec:proof}
In this section, we present proofs of our theoretical  results.

\subsection{Proofs of auxiliary results in Sections \ref{sec.SBF} and \ref{Sec:integral-operator}} 

We at first prove Lemma \ref{Lemma:source-condition}.

\begin{proof}[Proof of Lemma \ref{Lemma:source-condition}]
Due to \eqref{relation11111} and \eqref{kernel-relation}, we have 
\begin{equation}\label{population-inetral-repre}
    \eta( \mathcal L_{\phi,\psi})g(x)=\sum_{k=0}^\infty\eta\left(\frac{\hat{\phi}_k}{\hat{\psi}_k}\right) \sum_{\ell=1}^{Z(d,k)}\hat{g}_{k,\ell}Y_{k,\ell}(x)
    =
     \sum_{k=0}^\infty\eta\left(\hat{\phi}_k^{1-\beta}\right) \sum_{\ell=1}^{Z(d,k)}\hat{g}_{k,\ell}Y_{k,\ell}(x), 
\end{equation}
implying
$$
   \|\eta( \mathcal L_{\phi,\psi})g\|_\psi=
   \sum_{k=0}^\infty \psi_{k}^{-1}
  \left(\eta\left(\hat{\phi}_k^{1-\beta}\right)\right)^2\sum_{\ell=1}^{Z(d,k)}\hat{g}_{k,j}^2.
$$
Then, \eqref{population-inetral-repre} with $\eta(t)=t^{1-\beta}$ and $\beta=0$ yields
$$
    \mathcal L_\phi^{1-\beta} g(x)= \sum_{k=0}^\infty \hat{\phi}_k^{1-\beta} \sum_{\ell=1}^{Z(d,k)}\hat{g}_{k,\ell}Y_{k,\ell}(x).
$$
Comparing the above equality with \eqref{population-inetral-repre} with $\eta(t)=t$, we then have
$\mathcal L_{\phi,\psi}g=\mathcal L_{\phi}^{1-\beta}g$ for any $g\in\mathcal N_\psi$. The same approach as above can derive $ L_{\phi,\psi}=  L_{\phi}^{1-\beta}$ and verifies \eqref{operator-relation-112}. 
For $f\in\mathcal N_\phi$,  \eqref{relation11111} follows
$$
  \|f\|_\psi=\sum_{k=0}^\infty\hat{\psi}_k^{-1}
  \sum_{\ell=1}^{Z(d,k)}\hat{f}_{k,\ell}^2=
 \sum_{k=0}^\infty\hat{\phi}_k^{-1}
   \sum_{\ell=1}^{Z(d,k)}\frac{\hat{\phi}_k}{\hat{\psi}_k} \hat{f}_{k,\ell}^2=\|L_{\phi,\psi}^{1/2}f\|_\phi.
$$
This together with  \eqref{operator-relation-112} proves \eqref{norm-relation-123}. 
For any  $f\in\mathcal N_\varphi$ with $\varphi$ satisfying 
\eqref{kernel-relation}  for  $\alpha\geq\beta$ and $0\leq \beta<1$, we obtain
$$
  \|f\|_\varphi=\sum_{k=0}^\infty\hat{\varphi}_k^{-1}
  \sum_{\ell=1}^{Z(d,k)}\hat{f}_{k,\ell}^2=
 \sum_{k=0}^\infty\hat{\psi}_k^{-1}
   \sum_{\ell=1}^{Z(d,k)}\frac{\hat{\psi}_k}{\hat{\varphi}_k} \hat{f}_{k,\ell}^2
   =\sum_{k=0}^\infty\hat{\psi}_k^{-1}
   \sum_{\ell=1}^{Z(d,k)}\hat{\phi}_k^{\beta-\alpha} \hat{f}_{k,\ell}^2.
$$
Setting $\eta(t)=t^{\frac{\beta-\alpha}{2(1-\beta)}}$ and $h'=\mathcal L_{\phi,\psi}^{\frac{\beta-\alpha}{2(1-\beta)}}f$, we have
$
    \|f\|_\varphi=\|h'\|_\psi.
$
The same approach as above and  \eqref{operator-relation-112} verify \eqref{source-condition} and complete  the proof Lemma \ref{Lemma:source-condition}. 
\end{proof}

We then prove Proposition \ref{Prop:operator-sampling} as follows.

\begin{proof}[Proof of Proposition \ref{Prop:operator-sampling}]
For any $\lambda>0$ and diagnose matrix $W$ with positive elements, it follows from $A^{-1}-B^{-1}=B^{-1}(A-B)A^{-1}$ for positive operator $A,B$ that
\begin{eqnarray*}
    &&( L_\phi+\lambda I)^{1/2} (L_{\phi,\Lambda,W}+\lambda I)^{-1}( L_\phi+\lambda I)^{1/2}\\
    &=&
    ( L_\phi+\lambda I)^{1/2} ((L_{\phi,\Lambda,W}+\lambda I)^{-1}-(L_{\phi}+\lambda I)^{-1})( L_\phi+\lambda I)^{1/2}+I\\
    &=&
    ( L_\phi+\lambda I)^{-1/2} (L_{\phi}-L_{\phi,\Lambda,W}) ( L_{\phi,\Lambda,W}+\lambda I)^{-1}
    ( L_\phi+\lambda I)^{1/2}+I.
\end{eqnarray*}
Then 
\begin{eqnarray*}
      \mathcal Q_{\Lambda,\lambda,W}^2  
     &=&
     \|( L_\phi+\lambda I)^{1/2} (L_{\phi,\Lambda,W}+\lambda I)^{-1}( L_\phi+\lambda I)^{1/2}\|_{\phi\rightarrow\phi} \nonumber\\
     &\leq&
     1+\|(L_\phi+\lambda I)^{-1/2}(L_\phi-L_{\phi,\Lambda,W})(L_\phi+\lambda I)^{-1/2}\|_{\phi\rightarrow\phi}\mathcal Q_{\Lambda,\lambda,W}^2.\nonumber 
\end{eqnarray*}
Recalling
$  
     \mathcal R_{\Lambda,\lambda,W,1/2,1/2}  \leq \tilde{c}<1/2,
$  
we get \eqref{def:Q} directly.  To derive \eqref{def:Q*}, we have from \eqref{def:Q} that 
\begin{align*}
    &\|(L_{\phi,D,W}+\lambda I)^{-1/2}  (L_{\phi,D,W}- L_{\phi})(L_{\phi,D,W}+\lambda I)^{-1/2}\|\\
    \leq&
    \|(L_{\phi,D,W}+\lambda I)^{-1/2}(L_{\phi}+\lambda I)^{1/2}\|^2\mathcal R_{\Lambda,\lambda,W,1/2,1/2}  
    \leq 
      \frac{\tilde{c}}{1-\tilde{c}} .
\end{align*}
Therefore, 
\begin{eqnarray*}
    &&(\mathcal Q_{\Lambda,\lambda,W}^*)^2=\|(L_{\phi,D,W}+\lambda I)^{1/2}(L_{\phi}+\lambda
         I)^{-1} (L_{\phi,D,W}+\lambda I)^{1/2}\|_{\phi\rightarrow\phi}\\ 
        &\leq &
  1+\|(L_{\phi,D,W}+\lambda I)^{1/2}((L_{\phi}+\lambda
         I)^{-1}-(L_{\phi,D,W}+\lambda I)^{-1})   (L_{\phi,D,W}+\lambda I)^{1/2}\|_{\phi\rightarrow\phi}\\
         &\leq&
   1+ \frac{\tilde{c}}{1-\tilde{c}} (\mathcal Q_{\Lambda,\lambda,W}^*)^2,
\end{eqnarray*}
which proves \eqref{def:Q*}.
Due to \eqref{def:Q}, \eqref{kernel-relation} and
the  Cordes inequality 
 \cite[Lemma VII.5.5]{bhatia2013matrix} 
\begin{equation}\label{Cordes}
     \|A^s B^s\|_{\mathcal H\rightarrow\mathcal H}\leq \|AB\|^s_{\mathcal H\rightarrow\mathcal H},\qquad 0\leq s\leq 1
\end{equation}
for positive operators $A,B$ defined on $\mathcal H$, we have
\begin{eqnarray*} 
    \|f\|_\psi&=& \|L_\phi^{\frac{1-\beta}{2}}f\|_\phi
    \leq \|L_\phi^{\frac{1-\beta}{2}} (L_{\phi,\Lambda,W}+\lambda I)^{-\frac{1-\beta}{2}} \|_{\phi\rightarrow\phi}\|(L_{\phi,\Lambda,W}+\lambda I)^{\frac{1-\beta}{2}}f\|_\phi  \nonumber\\ 
    &\leq &
    \mathcal Q_{\Lambda,\lambda,W}^{1-\beta} \lambda^{-\beta/2}\|(L_{\phi,\Lambda,W}^{\frac{1}{2}}+\lambda^{1/2})f\|_\phi 
     \leq 
   \mathcal Q_{\Lambda,\lambda,W}^{1-\beta} \lambda^{-\beta/2}(\|f\|_{\Lambda,W}+\lambda^{1/2}\|f\|_\phi)\\
   &\leq&
    \frac{1}{(1-\tilde{c})^{(1-\beta)/2}} \lambda^{-\beta/2}(\|f\|_{\Lambda,W}+\lambda^{1/2}\|f\|_\phi),
\end{eqnarray*}
which proves \eqref{sampling-inequality-sobolev-1.1}.
Similarly, we have from \eqref{def:Q*} that 
\begin{eqnarray*} 
    \|f\|_{\Lambda,W} &=& \|L_{\phi,\Lambda,W}^{\frac1{2}}f\|_\phi
    \leq \|L_{\phi,\Lambda,W}^{\frac{1}{2}} (L_{\phi}+\lambda I)^{-\frac{1}{2}} \|_{\phi\rightarrow\phi}\|(L_{\phi}+\lambda I)^{\frac{1}{2}}f\|_\phi  \nonumber\\ 
    &\leq &
    \mathcal Q_{\Lambda,\lambda,W}^*  \|(L_{\phi}^{\frac{1}{2}}+\lambda^{1/2})f\|_\phi 
     \leq 
   \mathcal Q^*_{\Lambda,\lambda,W} (\|f\|_{L^2}+\lambda^{1/2}\|f\|_\phi)\\
   &\leq&
     \sqrt{\frac{1-\tilde{c}}{1-2\tilde{c}}}(\|f\|_{L^2}+\lambda^{1/2}\|f\|_\phi).
\end{eqnarray*}
 This proves \eqref{sampling-inequality-sobolev-2.1} and completes the proof of Proposition \ref{Prop:operator-sampling}.
\end{proof}

To prove Proposition \ref{Proposition:value-difference-deterministic}, we need the sub-exponential random variable in the following definition \cite[Definition 2.7.5]{vershynin2018high}.

\begin{definition}\label{Def:sub-exponential}
    A mean-zero random variable $\epsilon$ is called a sub-exponential random variable, if  there is a $K'\geq0$ (called as the width) such that 
\begin{equation}\label{moment-for-sub-expontial}
    E\{\exp(u \epsilon)\}\leq \exp((K')^2u^2), \qquad \forall |u|\leq 1/K'. 
\end{equation}
The sub-exponential norm of $\epsilon$ is defined by
\begin{equation}\label{sub-exponential-norm}
     \|\epsilon\|_{SE}:=\inf\{t>0:E\{\exp(|\epsilon|/t)\}\leq 2\}.
\end{equation}
\end{definition}

It can be found in \cite[Page 31]{vershynin2018high} that any sub-Gaussian distribution is clearly sub-exponential. Furthermore, the following lemma derived in \cite[Lemma 2.7.7]{vershynin2018high} shows that the product of sub-Gaussian random variables is a sub-exponential random variable.
\begin{lemma}\label{Lemma:sub-Gaussian-product}
    Let $\varepsilon,\varepsilon'$ be sub-Gaussian random variable. Then $\varepsilon\varepsilon'$ is sub-exponential. Moreover
$$
     \|\varepsilon\varepsilon'\|_{SE}\leq\|\varepsilon\|_{SG}\|\varepsilon'\|_{SG}.
$$
\end{lemma}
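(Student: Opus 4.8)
The final statement is Lemma~\ref{Lemma:sub-Gaussian-product}, asserting that the product of two sub-Gaussian variables is sub-exponential with $\|\varepsilon\varepsilon'\|_{SE}\le\|\varepsilon\|_{SG}\|\varepsilon'\|_{SG}$. The plan is to work directly with the Orlicz-type norms \eqref{sub-Gaussian-norm} and \eqref{sub-exponential-norm}, using only Young's inequality and Cauchy--Schwarz. First I would reduce to the normalized situation: both $\|\cdot\|_{SG}$ and $\|\cdot\|_{SE}$ are positively homogeneous, and sub-Gaussianity of $\varepsilon$ guarantees $0<\|\varepsilon\|_{SG}<\infty$ (e.g.\ via \eqref{moment-for-squre-sub-gau-2}), so after replacing $\varepsilon$ by $\varepsilon/\|\varepsilon\|_{SG}$ and $\varepsilon'$ by $\varepsilon'/\|\varepsilon'\|_{SG}$ it suffices to prove $\|\varepsilon\varepsilon'\|_{SE}\le 1$ whenever $\|\varepsilon\|_{SG}=\|\varepsilon'\|_{SG}=1$.

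Next I would apply Young's inequality $|ab|\le\tfrac{a^2}{2}+\tfrac{b^2}{2}$ pointwise, which gives $\exp(|\varepsilon\varepsilon'|)\le\exp(\varepsilon^2/2)\exp((\varepsilon')^2/2)$, and then Cauchy--Schwarz on the underlying probability space:
\begin{equation*}
E\{\exp(|\varepsilon\varepsilon'|)\}\le\bigl(E\{\exp(\varepsilon^2)\}\bigr)^{1/2}\bigl(E\{\exp((\varepsilon')^2)\}\bigr)^{1/2}\le\sqrt2\cdot\sqrt2=2,
\end{equation*}
where each factor is bounded using \eqref{sub-Gaussian-norm} (with the normalization $\|\varepsilon\|_{SG}=\|\varepsilon'\|_{SG}=1$, so that $E\{\exp(\varepsilon^2)\}\le 2$ and likewise for $\varepsilon'$). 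By the definition \eqref{sub-exponential-norm} this means $\|\varepsilon\varepsilon'\|_{SE}\le 1$, and undoing the normalization yields $\|\varepsilon\varepsilon'\|_{SE}\le\|\varepsilon\|_{SG}\|\varepsilon'\|_{SG}$.

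The remaining point --- and the only one I expect to require care --- is to convert finiteness of $\|\varepsilon\varepsilon'\|_{SE}$ into the precise statement that $\varepsilon\varepsilon'$ is sub-exponential in the sense of Definition~\ref{Def:sub-exponential}, i.e.\ that the centred variable $\varepsilon\varepsilon'-E[\varepsilon\varepsilon']$ obeys the moment-generating-function bound \eqref{moment-for-sub-expontial} with width comparable to $\|\varepsilon\varepsilon'\|_{SE}$. I would do this along standard lines: expand $E\{\exp(u(\varepsilon\varepsilon'-E[\varepsilon\varepsilon']))\}$ as a power series, estimate the absolute moments $E\{|\varepsilon\varepsilon'|^p\}\le p!\,\|\varepsilon\varepsilon'\|_{SE}^p$ from \eqref{sub-exponential-norm}, and sum the resulting geometric-type series for $|u|$ small relative to $1/\|\varepsilon\varepsilon'\|_{SE}$; the mean-zero centring is used only to kill the linear term and affects the width by at most an absolute constant. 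This chain of implications among the equivalent descriptions of sub-exponential variables is exactly what is recorded in \cite[Prop.~2.7.1, Lemma~2.7.7]{vershynin2018high}, on which the statement of Lemma~\ref{Lemma:sub-Gaussian-product} is modelled.
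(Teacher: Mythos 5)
Your proof is correct, and it coincides with the paper's treatment: the paper offers no proof of its own but simply invokes \cite[Lemma 2.7.7]{vershynin2018high}, whose standard proof is exactly your normalization--Young--Cauchy--Schwarz argument giving $E\{\exp(|\varepsilon\varepsilon'|)\}\le 2$ under $\|\varepsilon\|_{SG}=\|\varepsilon'\|_{SG}=1$. Your closing remark about centring $\varepsilon\varepsilon'$ to match the mean-zero requirement in Definition~\ref{Def:sub-exponential} is a point the paper glosses over (harmlessly, since in its only application the factors are independent mean-zero variables), so no gap remains.
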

We then introduce the following Bernstein's inequality for sub-exponential random variables presented in \cite[Theorem 2.8.2]{vershynin2018high}.
\begin{lemma}\label{Lemma:Bernstei-inequaltiy-random}
    Let $n\in\mathbb N$, $\varepsilon_1,\dots,\varepsilon_{n}$ be independent, mean-zero sub-exponential random variables, and let ${\bf a}=(a_1,\dots,a_n)^T\in\mathbb R^{n}$. Then for every $t\geq 0$, we have 
\begin{equation}\label{Bernstein-for-sub-e}
    P\left\{\left|\sum_{i=1}^{n}a_i\varepsilon_i\right|\geq t\right\}
    \leq 2\exp\left(-c_1'\min\left(\frac{t^2}{(\max_i\|\varepsilon_i\|_{SE})^2\|{\bf a}\|_2^2},\frac{t}{(\max_i\|\varepsilon_i\|_{SE})\|{\bf a}\|_\infty}     \right)\right),
\end{equation}
where $c_1'$ is an absolute constant and  $\|{\bf a}\|_p^p:=\sum_{i=1}^{n}|a_i|^p$.
\end{lemma}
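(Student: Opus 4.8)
The plan is to prove \eqref{Bernstein-for-sub-e} by the classical moment-generating-function (Chernoff) method: exploit independence to factorize the MGF of the weighted sum, bound each factor using the sub-exponential property, and optimize the resulting exponent over a free parameter in two separate regimes. Write $K := \max_i \|\varepsilon_i\|_{SE}$ and $S := \sum_{i=1}^n a_i \varepsilon_i$. First I would record the MGF estimate implicit in Definition \ref{Def:sub-exponential}: there is an absolute constant $c$ so that each mean-zero sub-exponential $\varepsilon_i$ satisfies $E\{\exp(u\varepsilon_i)\} \leq \exp(c^2 \|\varepsilon_i\|_{SE}^2 u^2)$ for all $|u| \leq 1/(c\|\varepsilon_i\|_{SE})$; this is exactly the equivalence between the width $K'$ of \eqref{moment-for-sub-expontial} and the Orlicz norm \eqref{sub-exponential-norm}, valid up to absolute constants. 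Consequently $E\{\exp(u a_i \varepsilon_i)\} \leq \exp(c^2 K^2 u^2 a_i^2)$ whenever $|u a_i| \leq 1/(cK)$, and this holds simultaneously for every $i$ as soon as $|u| \leq 1/(cK\|{\bf a}\|_\infty)$.

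Next, by independence of the $\varepsilon_i$ the MGF of $S$ factorizes, so for $0 \leq u \leq 1/(cK\|{\bf a}\|_\infty)$
\[
E\{\exp(uS)\} = \prod_{i=1}^n E\{\exp(u a_i \varepsilon_i)\} \leq \exp\!\Big(c^2 K^2 u^2 \sum_{i=1}^n a_i^2\Big) = \exp\big(c^2 K^2 u^2 \|{\bf a}\|_2^2\big).
\]
Markov's inequality applied to $\exp(uS)$ then gives, for the upper tail,
\[
P\{S \geq t\} \leq \exp(-ut)\, E\{\exp(uS)\} \leq \exp\!\big(-ut + c^2 K^2 u^2 \|{\bf a}\|_2^2\big), \qquad 0 \leq u \leq \tfrac{1}{cK\|{\bf a}\|_\infty}.
\]

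Then I would minimize the exponent $-ut + c^2 K^2 u^2 \|{\bf a}\|_2^2$ over the admissible range of $u$. The unconstrained minimizer is $u^\star = t/(2c^2 K^2 \|{\bf a}\|_2^2)$. In the sub-Gaussian regime $u^\star \leq 1/(cK\|{\bf a}\|_\infty)$, i.e. $t$ small relative to $\|{\bf a}\|_2^2/\|{\bf a}\|_\infty$, I set $u = u^\star$ to get $P\{S \geq t\} \leq \exp(-t^2/(4c^2 K^2 \|{\bf a}\|_2^2))$. In the complementary sub-exponential regime the constraint is active, so I take $u = 1/(cK\|{\bf a}\|_\infty)$ at the boundary; the exponent becomes $-t/(cK\|{\bf a}\|_\infty) + \|{\bf a}\|_2^2/\|{\bf a}\|_\infty^2$, and the regime condition forces the quadratic correction to be at most half the linear term, yielding $P\{S \geq t\} \leq \exp(-t/(2cK\|{\bf a}\|_\infty))$. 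Both bounds are dominated by $\exp(-c_1' \min(t^2/(K^2\|{\bf a}\|_2^2), t/(K\|{\bf a}\|_\infty)))$ for a suitable absolute $c_1'$. Finally I apply the identical argument to $-S = \sum_i a_i(-\varepsilon_i)$, noting the $-\varepsilon_i$ are again mean-zero sub-exponential with the same norms, to control $P\{S \leq -t\}$, and a union bound over the two tails produces the factor $2$ in \eqref{Bernstein-for-sub-e}.

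The main obstacle, and essentially the only place needing genuine care, is the two-regime optimization combined with the constant bookkeeping that converts the width $K'$ of \eqref{moment-for-sub-expontial} into the sub-exponential norm $\|\varepsilon_i\|_{SE}$. Establishing that these two quantities are equivalent up to an absolute constant, so that the final bound may legitimately be written with $K = \max_i \|\varepsilon_i\|_{SE}$ rather than with the MGF width, is the standard but slightly delicate step; everything after that is a routine Chernoff computation.
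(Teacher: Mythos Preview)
Your proposal is correct and follows the standard Chernoff/MGF argument, which is exactly how this inequality is established in the reference the paper cites. Note that the paper does not give its own proof of this lemma: it simply quotes the result as \cite[Theorem~2.8.2]{vershynin2018high}, so your sketch in fact supplies more detail than the paper itself.
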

We also need the following lemma  derived in \cite[P.17]{Feng2021radial}.
\begin{lemma}\label{Lemma:effective-dimension}
For any $\lambda>0$,  there holds
 \begin{eqnarray}\label{effective-111}
    \langle ( L_\phi+\lambda I)^{-1}\phi_{x},\phi_{x'}\rangle_\phi
     =
   \sum_{k=0}^\infty(\hat{\phi}_k+\lambda  )^{-1}\hat{\phi}_k\frac{Z(d,k)}{\Omega_{d}}P_k^{d+1}(x\cdot
                  x') .
\end{eqnarray}   
If in addition 
$\hat{\phi}_k\sim k^{-2\gamma}$ with $\gamma>d/2$, then
\begin{eqnarray}\label{effective-estimate}
  \sum_{k=0}^\infty(\hat{\phi}_k+\lambda I)^{-1}\hat{\phi}_k\frac{Z(d,k)}{\Omega_{d}}P_{k}^{d+1}
(x\cdot x)  \leq c_2' \lambda^{-\frac{d}{2\gamma}},
\end{eqnarray}
where $c_2'$ is a constant depending only on $\gamma$ and $d$.
\end{lemma}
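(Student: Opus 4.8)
The plan is to prove \eqref{effective-111} by diagonalizing $L_\phi$, and then to deduce \eqref{effective-estimate} from a dyadic splitting of the resulting series. Neither step is hard; the work is essentially bookkeeping with the spherical harmonic expansion.

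For \eqref{effective-111}, I would first recall from the Funk--Hecke formula \eqref{funkhecke} that $L_\phi Y_{k,\ell}=\hat\phi_k Y_{k,\ell}$, so $\{\hat\phi_k^{-1/2}Y_{k,\ell}\}$ is an orthonormal basis of $\mathcal N_\phi$ made of eigenfunctions of $L_\phi$ with eigenvalues $\hat\phi_k$; hence $(L_\phi+\lambda I)^{-1}$ acts on the $k$-th block by the factor $(\hat\phi_k+\lambda)^{-1}$. By \eqref{kernel-basis-relation} the reproducing kernel $\phi_x$ has Fourier coefficients $\widehat{(\phi_x)}_{k,\ell}=\hat\phi_k Y_{k,\ell}(x)$, so the Fourier coefficients of $(L_\phi+\lambda I)^{-1}\phi_x$ are $(\hat\phi_k+\lambda)^{-1}\hat\phi_k Y_{k,\ell}(x)$. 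Plugging these and $\widehat{(\phi_{x'})}_{k,\ell}=\hat\phi_k Y_{k,\ell}(x')$ into the native-space inner product $\langle f,g\rangle_\phi=\sum_{k}\hat\phi_k^{-1}\sum_\ell \hat f_{k,\ell}\hat g_{k,\ell}$, the factor $\hat\phi_k^{-1}$ cancels one power of $\hat\phi_k$ and leaves $\sum_{k}(\hat\phi_k+\lambda)^{-1}\hat\phi_k\sum_\ell Y_{k,\ell}(x)Y_{k,\ell}(x')$, and the addition formula \eqref{addition-formula} converts the inner $\ell$-sum into $\frac{Z(d,k)}{\Omega_d}P_k^{d+1}(x\cdot x')$, which is \eqref{effective-111}. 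The one genuinely analytic point here is the justification of the termwise spectral calculus and the rearrangement of the double series; I would handle it by the summability $\sum_k\hat\phi_k Z(d,k)<\infty$ (part of $\phi$ being positive definite), which also gives $\phi_x\in\mathcal N_\phi$.

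For \eqref{effective-estimate}, I would specialize \eqref{effective-111} to $x'=x$ and use $P_k^{d+1}(x\cdot x)=P_k^{d+1}(1)=1$, so that the quantity to bound is $S(\lambda):=\tfrac1{\Omega_d}\sum_{k\ge0}\tfrac{\hat\phi_k}{\hat\phi_k+\lambda}Z(d,k)$. Using $\hat\phi_k\stackrel{d}{\sim}k^{-2\gamma}$ and $Z(d,k)\stackrel{d}{\sim}k^{d-1}$, I would split the sum at $k_\lambda:=\lceil\lambda^{-1/(2\gamma)}\rceil$: for $k\le k_\lambda$ bound $\tfrac{\hat\phi_k}{\hat\phi_k+\lambda}\le1$, giving a contribution $\lesssim\sum_{k\le k_\lambda}k^{d-1}\lesssim k_\lambda^{d}\lesssim\lambda^{-d/(2\gamma)}$; for $k>k_\lambda$ bound $\tfrac{\hat\phi_k}{\hat\phi_k+\lambda}\le\hat\phi_k/\lambda\lesssim\lambda^{-1}k^{-2\gamma}$, giving a tail $\lesssim\lambda^{-1}\sum_{k>k_\lambda}k^{d-1-2\gamma}$. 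This tail series converges precisely because $\gamma>d/2$ forces $d-1-2\gamma<-1$, and it is $\lesssim\lambda^{-1}k_\lambda^{d-2\gamma}\lesssim\lambda^{-1}\lambda^{(2\gamma-d)/(2\gamma)}=\lambda^{-d/(2\gamma)}$. Adding the two pieces gives $S(\lambda)\le c_2'\lambda^{-d/(2\gamma)}$ with $c_2'$ depending only on $\gamma,d$ (through the $\stackrel{d}{\sim}$ constants and the value of $\sum_k k^{d-1-2\gamma}/k_\lambda^{d-2\gamma}$).

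I do not expect a real obstacle: the only subtleties are the rearrangement of the doubly-indexed series in the first step (all terms become nonnegative once $x'=x$, and otherwise summability suffices) and keeping the constant in \eqref{effective-estimate} independent of $\lambda$, which is exactly what the hypothesis $\gamma>d/2$ delivers by making the tail sum finite.
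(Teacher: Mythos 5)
The paper does not actually prove this lemma: it is imported verbatim from \cite[P.17]{Feng2021radial}, so there is no in-paper proof to compare against. Your argument is correct and is the standard proof one would expect and that the cited reference uses: diagonalize $L_\phi$ on the spherical-harmonic blocks via Funk--Hecke, cancel one power of $\hat\phi_k$ against the native-space weight, apply the addition formula to get \eqref{effective-111}, and then bound the trace-type sum by splitting at $k_\lambda\sim\lambda^{-1/(2\gamma)}$. The only point to tidy is the regime $\lambda>1$, where $k_\lambda=1$ and the head estimate $k_\lambda^{d}\lesssim\lambda^{-d/(2\gamma)}$ fails as written; there you should instead bound every term by $\hat\phi_k/\lambda$ and use $\sum_k\hat\phi_k Z(d,k)<\infty$ together with $\lambda^{-1}\le\lambda^{-d/(2\gamma)}$ for $\lambda\ge1$, which is a one-line patch and does not affect the regime $\lambda\to0$ in which the lemma is applied.
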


With these helps, we  prove Proposition \ref{Proposition:value-difference-deterministic} as follows.

\begin{proof}[Proof of Proposition \ref{Proposition:value-difference-deterministic}]
Since
\begin{eqnarray*} 
   &&\left\|(  L_\phi+\lambda I)^{-1/2}(\mathcal L_{\phi ,\Lambda,W}f^*-S_{\Lambda,W}^T{\bf y}_{\Lambda,W})\right\|_\phi^2\\
   &=&
  \left\langle \sum_{i=1}^{|\Lambda|}w_{i}(f^*(x_i)-y_i)(  L_\phi+\lambda I)^{-1}\phi_{x_i},
   \sum_{i'=1}^{|\Lambda|}w_{i'}(f^*(x_{i'})-y_{i'})\phi_{x_{i'}}\right\rangle_\phi \nonumber\\
   &=&
\sum_{i=1}^{|\Lambda|}\sum_{i'=1}^{|\Lambda|}w_{i}w_{i'}(f^*(x_i)-y_i^*)(f^*(x_{i'})-y_{i'}^*)\langle ( L_\phi+\lambda I)^{-1}\phi_{x_i},\phi_{x_{i'}}\rangle_\phi,   
\end{eqnarray*}
it follows from Lemma \ref{Lemma:effective-dimension}  that
\begin{eqnarray}\label{error-sample-dec-deter}
   &&\left\|(  L_\phi+\lambda I)^{-1/2}(\mathcal L_{\phi ,\Lambda,W}f^*-S_{\Lambda,W}^T{\bf y}_{\Lambda,W})\right\|_\phi^2 \nonumber\\
   &=&
   \sum_{k=0}^\infty\frac{\hat{\phi}_k }{ \hat{\phi}_k+\lambda }\frac{Z(d,k)}{\Omega_{d}}\sum_{i=1}^{|\Lambda|}\sum_{i'=1}^{|\Lambda|}w_{i}w_{i'}\varepsilon_i\varepsilon_{i'}P_k^{d+1}(x_i\cdot x_{i'})\nonumber\\
 &=&
  \sum_{i=1}^{|\Lambda|}\sum_{i'\neq i}\sum_{k=0}^\infty\frac{\hat{\phi}_k }{ \hat{\phi}_k+\lambda }\frac{Z(d,k)}{\Omega_{d}}w_{i}w_{i'}\varepsilon_i\varepsilon_{i'}P_k^{d+1}(x_i\cdot x_{i'})\nonumber \\
&+&
\sum_{i=1}^{|\Lambda|}\sum_{k=0}^\infty\frac{\hat{\phi}_k }{ \hat{\phi}_k+\lambda }\frac{Z(d,k)}{\Omega_{d}}w_{i}^2\varepsilon_i^2P_k^{d+1}(x_i\cdot x_i)\nonumber\\
&=:&
 \tilde{\mathcal I}_1+ \tilde{\mathcal I}_2.  \end{eqnarray}
We at first bound $\tilde{\mathcal I}_2$. Due to the well known Chernoff's inequality 
$$
       P\left[ \xi \geq t\right]  \leq  \frac{  E[e^{\nu\xi}]}{e^{\nu t}},\qquad \forall t,\nu>0  
$$
for an arbitrary random variable $\xi$ and
\begin{equation}\label{Sample-meida-operator}
     \sum_{k=0}^\infty\frac{\hat{\phi}_k }{ \hat{\phi}_k+\lambda }\frac{Z(d,k)}{\Omega_{d}}w_{i}^2P_k^{d+1}(x_i\cdot x_i)
    \leq
   c_2' \lambda^{-\frac{d}{2\gamma}}w_{i}^2
\end{equation}  
which was derived by \eqref{effective-estimate}, 
setting
$$
\xi=\nu\sum_{i=1}^{|\Lambda|}\sum_{k=0}^\infty\frac{\hat{\phi}_k }{ \hat{\phi}_k+\lambda }\frac{Z(d,k)}{\Omega_{d}}w_{i}^2\varepsilon_i^2P_k^{d+1}(x_i\cdot x_i) 
$$
with
$\nu=\frac1{c_2'}\lambda^{\frac{d}{2\gamma}} \left(\sum_{i=1}^{|\Lambda|}w_{i}^{2}\right)^{-1}$,
we  obtain 
$$
\nu\sum_{i=1}^{|\Lambda|}\sum_{k=0}^\infty\frac{\hat{\phi}_k }{ \hat{\phi}_k+\lambda }\frac{Z(d,k)}{\Omega_{d}}w_{i}^2 P_k^{d+1}(x_i\cdot x_i) 
\leq 1
$$
and consequently from \eqref{moment-for-squre-sub-gau-1} with $C_3=1$ and \eqref{effective-estimate} that
\begin{eqnarray*}
      && P\{\tilde{\mathcal I}_2\geq t\}
        \leq   \frac{  E\left\{e^{\nu\sum_{i=1}^{|\Lambda|}\sum_{k=0}^\infty\frac{\hat{\phi}_k }{ \hat{\phi}_k+\lambda }\frac{Z(d,k)}{\Omega_{d}}w_{i}^2\varepsilon_i^2 P_{k}^{d+1}(x_i\cdot x_i)}\right\}}{e^{\nu t}}\\
   & =& 
   \frac{\prod_{i=1}^{|\Lambda|}E\left\{e^{\nu\sum_{k=0}^\infty\frac{\hat{\phi}_k }{ \hat{\phi}_k+\lambda }\frac{Z(d,k)}{\Omega_{d}}w_{i}^2P_{k}^{d+1}(x_i\cdot x_i)\varepsilon_i^2} \right\}}{e^{\nu t}}\\
    &\leq& \frac{\prod_{i=1}^{|\Lambda|}e^{\nu\sum_{k=0}^\infty\frac{\hat{\phi}_k }{ \hat{\phi}_k+\lambda }\frac{Z(d,k)}{\Omega_{d}}w_{i}^2P_{k}^{d+1}(x_i\cdot x_i)}}{e^{\nu t}}\\
    &\leq&
e^{1-   \frac{t}{c_2'\sum_{i=1}^{|\Lambda|}w_{i}^2}\lambda^{\frac{d}{2\gamma}}  }.
\end{eqnarray*}
Therefore, with confidence $1-\delta$ there holds
 \begin{equation}\label{bound-I-2}
 \tilde{\mathcal I}_2\leq 2c_2'\lambda^{-\frac{d}{2\gamma}} \sum_{i=1}^{|\Lambda|} w_{i}^2\log\frac3\delta.
 \end{equation}
Since $\varepsilon_i$ is a sub-Gaussian random variable with sub-Gaussian norm $M$, it follows from Lemma \ref{Lemma:sub-Gaussian-product} that $\varepsilon_i\varepsilon_{i'}$ for fixed $i,i'$ is a sub-exponential random variable with sub-exponential norm not larger than $M^2$.  Furthermore, for any $i\neq i'$, we have $E\{\varepsilon_i\varepsilon_{i'}\}=E\{\varepsilon_i\}E\{\varepsilon_{i'}\}=0$.
Let 
\begin{eqnarray*}
   a_{i,i'}=  \sum_{k=0}^\infty\frac{\hat{\phi}_k }{ \hat{\phi}_k+\lambda }\frac{Z(d,k)}{\Omega_{d}} w_{i}w_{i'} P_k^{d+1}(x_i\cdot
                  x_{i'}),\qquad i,i'=1,\dots, |\Lambda|.
\end{eqnarray*}
We then have from \eqref{effective-estimate} and 
 $P_k^{d+1}(t)\leq 1$ for any $t\in[-1,1]$ that
$$
\max_{i,i'}|a|\leq c_2' \lambda^{-\frac{d}{2\gamma}}w_{\max}^2,\qquad\mbox{and} \quad  \sum_{i,i'=1}^{|\Lambda|}|a_{i,i'}|^2 \leq (c_2')^2 \lambda^{-\frac{d}{\gamma}}\left(\sum_{i=1}^{|\Lambda|}w_{i}^2\right)^2.
$$
Then, Lemma
  \ref{Lemma:Bernstei-inequaltiy-random} shows
$$
    P\left\{\tilde{\mathcal I}_1\geq t\right\}
    \leq 2\exp\left(-c_1'\min\left(\frac{t^2\lambda^{\frac{d}{\gamma}}}{ (c_2')^2 M^4(\sum_{i=1}^{|\Lambda|}w_i^2)^2},\frac{t  \lambda^{\frac{d}{2\gamma}}}{M^2c_2'w_{\max}^2 }   \right)\right).
$$
Hence, with confidence $1-\delta$, there holds
\begin{equation}\label{bound-I-1}
    \tilde{\mathcal I}_1
    \leq
\frac{c_2'}{c_1'}M^2\lambda^{-\frac{d}{2\gamma}}\left(\sum_{i=1}^{|\Lambda|}w_i^2+ w_{\max}^2\right)  \log\frac3\delta.
\end{equation}
Plugging \eqref{bound-I-1} and \eqref{bound-I-2} into \eqref{error-sample-dec-deter}, we have that
$$
    \left\|(  L_\phi+\lambda I)^{-1/2}(\mathcal L_{\phi ,\Lambda,W}f^*-S_{\Lambda,W}^T{\bf y}_{\Lambda,W})\right\|_\phi^2\leq C_6 \lambda^{-d/2\gamma} \left(\sum_{i=1}^{|\Lambda|}w_i^2+ w_{\max}^2\right)
    \log\frac{6}{\delta}
$$
holds with confidence $1-\delta$ for $C_6:= \sqrt{c_2'\left(2+ \frac{M}{c_1'}\right)}$.
This completes the proof of Proposition \ref{Proposition:value-difference-deterministic}.
 \end{proof}

We then prove Proposition \ref{prop:equivalance-11} as follows.
\begin{proof}[Proof of Proposition \ref{prop:equivalance-11}]
The proof of \eqref{equivalance-1} can be found in \cite{lin2023dis}. It suffices to prove \eqref{equivalance-2}. To this end,
we get from
  \eqref{Reproducing-property} that
\begin{eqnarray*}
  &&\mathcal S_{\Lambda,\lambda,W,u}
 =
  \sup_{\|f\|_{\varphi}\leq 1}\sup_{\|g\|_\phi\leq 1}
  \langle (  L_\phi+\lambda I)^{-u} (\mathcal  L_{\phi,\Lambda,W} -J_{\phi}^T)f,g\rangle_\phi\\
  &=&
 \sup_{\|f\|_{\varphi}\leq 1}\sup_{\|g\|_\phi\leq 1}
  \left|\left\langle\int_{\mathbb S^d} f(x')\phi_{x'}d\omega(x')
  -
 \sum_{i=1}^{|\Lambda|}w_{i} f(x_i)\phi_{x_i},  (  L_\phi+\lambda I)^{-u} g\right\rangle_\phi\right|\\
  &=&
  \sup_{\|f\|_{\varphi}\leq 1}\sup_{\|g\|_\phi\leq 1}\left|\int_{\mathbb S^d}f(x')\langle\phi_{x'},  (  L_\phi+\lambda I)^{-u} g\rangle_\phi d\omega(x')-\sum_{i=1}^{|\Lambda|}w_{i}f(x_i)\langle\phi_{x_i},   (  L_\phi+\lambda I)^{-u} g\rangle_\phi\right|\\
  &=&
  \sup_{\|f\|_{\varphi}\leq 1}\sup_{\|g\|_\phi\leq 1}
  \left|\int_{\mathbb S^d}f(x')(  L_\phi+\lambda I)^{-u}g(x')d\omega(x')-\sum_{i=1}^{|\Lambda|}w_{i}f(x_i)(  L_\phi+\lambda I)^{-u} g(x_i)\right|\\
  &=&
{WCE}^{\varphi,\phi}_{\Lambda,\lambda,W,u}.
\end{eqnarray*}
This completes the proof of Proposition \ref{prop:equivalance-11}
 \end{proof}

The proof of Proposition \ref{proposition:quadrature-for-convolution} is motivated by the proof of  \cite[Proposition 4.5]{lin2023dis}, which is based on the following two lemmas  established in \cite[Theorem 2.1]{dai2006generalized} and \cite[Corollary 5.4]{narcowich2007direct}, respectively.
 
\begin{lemma}\label{Lemma:1}
Let $\mathcal Q_{\Lambda,s}:=\{(w_{i,s},  x_i): w_{i,s}\geq 0
\hbox{~and~}   x_i\in \Lambda\}$  be  a positive
 quadrature rule   on $\mathbb S^d$ with degree $s\in\mathbb N$.  For   any  $P\in \mathcal P_{s'}^d$ with $s'\in\mathbb N$, there holds
\begin{eqnarray*}
   \sum_{x_i\in\Lambda}w_{i,s} |P(x_i)|^2  &\leq& c_6'(s'/s)^{d} \|P\|^2_{L^2},\qquad s'>s,\\
\end{eqnarray*}
where $c_6'$ is a constant depending only on $d$.
\end{lemma}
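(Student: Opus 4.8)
The plan is to prove this Marcinkiewicz--Zygmund-type inequality in the oversampling regime $s'>s$ by transporting a localized Nikolskii (Remez-type) estimate for $\mathcal P_{s'}^d$ through the positivity and exactness of $\mathcal Q_{\Lambda,s}$, using an auxiliary nonnegative polynomial kernel of degree at most $s$ as the bridge. First I would fix such a kernel $G_s(x,y)=g_s(x\cdot y)$ enjoying three properties: $G_s\ge 0$ on $\mathbb S^d\times\mathbb S^d$; $\int_{\mathbb S^d}G_s(x,y)\,d\omega(y)=1$ for every $x$; and $G_s(x,y)\ge c_1 s^d$ whenever $\mathrm{dist}(x,y)\le c_2/s$, with $c_1,c_2>0$ depending only on $d$. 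A convenient choice is $G_s(x,y):=K_m(x,y)^2/K_m(x,x)$ with $m=\lfloor s/2\rfloor$ and $K_m$ the reproducing kernel of $\mathcal P_m^d$: it is a nonnegative polynomial of degree at most $s$, has unit mass by the reproducing property, and, since $K_m(x,x)$ is a constant $\asymp m^d$ while $K_m(x,y)\ge\tfrac12 K_m(x,x)$ once $\mathrm{dist}(x,y)$ is small enough (by the size and Bernstein estimates for the Dirichlet kernel), it satisfies $G_s(x,y)\gtrsim m^d\asymp s^d$ on a cap of radius $c_2/s$. By the Funk--Hecke formula \eqref{funkhecke}, convolution with $G_s$ preserves every $\mathbb H_k^d$ and annihilates it for $k>s$, so for $P\in\mathcal P_{s'}^d$ the function
\[
 F(x):=\int_{\mathbb S^d}G_s(x,y)\,|P(y)|^2\,d\omega(y)
\]
is a nonnegative member of $\mathcal P_s^d$.

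Next I would invoke the $L^2$ Nikolskii inequality on small geodesic caps: writing $\mathcal C(\xi,r)=\{y\in\mathbb S^d:\mathrm{dist}(y,\xi)\le r\}$, one has for $P\in\mathcal P_{s'}^d$ and every $\xi$
\[
 |P(\xi)|^2\le C(d)\,(s')^d\int_{\mathcal C(\xi,c/s')}|P(y)|^2\,d\omega(y),
\]
which follows from the Bernstein--Markov inequalities on $\mathbb S^d$ and the reproducing-kernel description of $\mathcal P_{s'}^d$. Taking $c$ so small that $c/s'\le c_2/s$ --- possible because $s'>s$ --- gives $\mathcal C(\xi,c/s')\subseteq\mathcal C(\xi,c_2/s)$, and then the lower bound on $G_s$ yields for each node $x_i\in\Lambda$
\[
 |P(x_i)|^2\le C(d)(s')^d\int_{\mathcal C(x_i,c_2/s)}|P(y)|^2\,d\omega(y)\le \frac{C(d)}{c_1}\Big(\frac{s'}{s}\Big)^{d}F(x_i).
\]
Multiplying by $w_{i,s}\ge0$, summing over $x_i\in\Lambda$, using $F\in\mathcal P_s^d$ with the exactness \eqref{eq:quadrature} to replace $\sum_{x_i\in\Lambda}w_{i,s}F(x_i)$ by $\int_{\mathbb S^d}F\,d\omega$, and finally Fubini together with the unit mass of $G_s$ to get $\int_{\mathbb S^d}F\,d\omega=\int_{\mathbb S^d}|P(y)|^2\,d\omega(y)=\|P\|_{L^2}^2$, I obtain $\sum_{x_i\in\Lambda}w_{i,s}|P(x_i)|^2\le c_6'(s'/s)^d\|P\|_{L^2}^2$ with $c_6'=C(d)/c_1$.

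The main obstacle is assembling $G_s$ with all four features simultaneously --- nonnegativity, degree $\le s$, unit mass, and a lower bound $\gtrsim s^d$ on a cap of radius $\sim1/s$ --- because enforcing positivity forces squaring a kernel and then carefully tracking the doubled degree; this is the most delicate bookkeeping in the argument. The second technical ingredient, the localized $L^2$ Nikolskii inequality with the sharp $(s')^d$ weight, is classical but still needs attention. Once both are in hand, the remaining chain of inequalities is short. Finally I would check the two borderline cases: when $s'$ is only slightly larger than $s$ the factor $(s'/s)^d$ is bounded and absorbed by constants, and when $s$ is very small the estimate degenerates to the global Nikolskii inequality $\|P\|_{L^\infty}^2\le C(d)(s')^d\|P\|_{L^2}^2$ combined with $\sum_{x_i\in\Lambda}w_{i,s}=1$.
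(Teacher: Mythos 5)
Your overall architecture (a nonnegative zonal kernel $G_s$ of degree $\le s$ with unit mass and a lower bound $\gtrsim s^d$ on caps of radius $\sim 1/s$, followed by exactness of the quadrature and Fubini) is sound and is indeed how the cited source (Dai, Proc.\ AMS 2006) controls the weights. The construction $G_s=K_m(\cdot,\cdot)^2/K_m(x,x)$ with $m=\lfloor s/2\rfloor$ works as you describe. The gap is the ``localized $L^2$ Nikolskii inequality'' $|P(\xi)|^2\le C(d)(s')^d\int_{\mathcal C(\xi,c/s')}|P|^2\,d\omega$, which is the load-bearing step: it is false. The point is that the evaluation functional $P\mapsto P(\xi)$ on $\mathcal P_{s'}^d$ is not uniformly bounded by the $L^2$ norm over a single cap of radius $\asymp 1/s'$. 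Already for trigonometric polynomials on the circle, after rescaling $t=au/n$ the question becomes whether $\sup\{|q(0)|^2/\|q\|_{L^2(-1,1)}^2\}$ is bounded over exponential sums $q(u)=\sum_{|k|\le n}c_ke^{i(ka/n)u}$ with frequencies in $[-a,a]$. If it were, the functional would be represented by some $\Lambda\in L^2(-1,1)$ with $\widehat\Lambda\equiv 1$ on $[-a,a]$; but $\widehat\Lambda$ is entire (compact support) and tends to zero at infinity (Riemann--Lebesgue), a contradiction. Since the span of $\{e^{i\theta u}:\theta\in[-a,a]\}$ is dense in $L^2(-1,1)$ and grid frequencies approximate arbitrary ones, one can produce $P\in\Pi_n$ with $P(0)=1$ and $n\int_{-a/n}^{a/n}|P|^2$ arbitrarily small. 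Neither Bernstein--Markov (which only localizes near a point where $|P|$ is comparable to $\|P\|_\infty$) nor the reproducing kernel (whose tail outside the cap is not controlled by the local integral) delivers the cap-by-cap bound; such inequalities hold only after summing over a bounded-overlap cover of the sphere, which is the standard Marcinkiewicz--Zygmund statement.

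The repair, which is essentially the proof in the literature, keeps your kernel $G_s$ but uses it only to establish subuniformity of the weights, $\sum_{x_i\in\mathcal C(z,1/s)}w_{i,s}\le Cs^{-d}$ for every cap (test the quadrature against $G_s(z,\cdot)$). For the node values one then uses a rapidly decaying, \emph{not} compactly supported reproducing kernel $L_{s'}$ for $\mathcal P_{s'}^d$ with $|L_{s'}(x,y)|\le C_N(s')^d(1+s'\,\mathrm{dist}(x,y))^{-N}$, writes $|P(x_i)|^2\le C\int_{\mathbb S^d}|L_{s'}(x_i,y)||P(y)|^2\,d\omega(y)$ by Cauchy--Schwarz, and exchanges the order of summation; the subuniformity of the weights together with the polynomial decay of $L_{s'}$ gives $\sum_i w_{i,s}|L_{s'}(x_i,y)|\le C(s'/s)^d$ uniformly in $y$, and the claim follows. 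The global tails that this argument carefully sums are exactly what your strictly local step discards. Note also that the paper does not prove this lemma itself but imports it from Dai's Theorem~2.1.
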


\begin{lemma}\label{Lemma:polynomial-interpolation}
Let $\hat{\phi}_k\stackrel{d}\sim k^{-2\gamma}$ with $\gamma>d/2$.  There exists a constant $c_7'$ depending only on $\gamma$ and $d$, such that for any $f \in\mathcal N_\phi$ and $s^*\geq c_7'/q_{\Lambda}$, there is a $P_{s^*}\in\mathcal P_{s^*}^d$ satisfying $\|P_{s^*}\|_\phi\leq 6\|f\|_\phi$, $\|P_{s^*}\|_{L^2}\leq 6\|f\|_{L^2}$,
\begin{equation}\label{best-app-interpolation1}
     f(x_i)=P_{s^*}(x_i), \qquad i=1,\dots,|\Lambda|,
\end{equation}
and
\begin{equation}\label{best-app-interpolation2}
    \|f-P_{s^*}\|_{L^2}\leq c_8'h_{\Lambda}^{\gamma} \|f\|_\phi,
\end{equation}
where $c_8'$ is a constant depending only on $d$ and $\gamma$.
\end{lemma}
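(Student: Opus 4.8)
The plan is to prove both bounds through the equivalences \eqref{equivalance-1} and \eqref{equivalance-2}, i.e. by estimating the worst-case quadrature errors \eqref{def.WCE2} and \eqref{def.WCE1} directly. Fix admissible $f,g$ and set, in the $\phi,\phi$ case, $H_1=(L_\phi+\lambda I)^{-v}f$ and $H_2=(L_\phi+\lambda I)^{-u}g$ with $\|f\|_\phi,\|g\|_\phi\le1$; in the $\varphi,\phi$ case set $H_1=f$ with $\|f\|_\varphi\le1$ and $H_2=(L_\phi+\lambda I)^{-u}g$. In either case the quantity to control is the quadrature error $\mathcal E:=\int_{\mathbb S^d}H_1H_2\,d\omega-\sum_i w_{i,s}H_1(x_i)H_2(x_i)$ of the product $H_1H_2$. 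Since $\mathcal Q_{\Lambda,s}$ is a positive rule of degree $s$, it is exact on constants (so $\sum_i w_{i,s}=1$) and, as is standard for such rules (cf. the existence condition $s\lesssim h_\Lambda^{-1}$ in Lemma \ref{Lemma:fixed cubature-poly}), its mesh norm obeys $h_\Lambda\le c/s$; I will use this to convert the $h_\Lambda$-rates of Lemma \ref{Lemma:polynomial-interpolation} into the advertised $s$-rates.

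The core is a two-scale polynomial decomposition. Let $m=\lfloor s/2\rfloor$ and let $Q_i$ be the degree-$m$ truncation (the $L^2$-projection onto $\mathcal P_m^d$) of $H_i$; the smoothness encoded by $\hat\phi_k\stackrel{d}{\sim} k^{-2\gamma}$ (resp.\ $\hat\varphi_k=\hat\phi_k^\alpha\stackrel{d}{\sim} k^{-2\alpha\gamma}$ for $H_1=f$ via \eqref{kernel-relation}) gives the Jackson bounds $\|H_2-Q_2\|_{L^2}\lesssim s^{-\gamma}\|H_2\|_\phi$ and $\|H_1-Q_1\|_{L^2}\lesssim s^{-\gamma}\|H_1\|_\phi$ (resp.\ $\lesssim s^{-\alpha\gamma}\|f\|_\varphi$). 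Separately, apply Lemma \ref{Lemma:polynomial-interpolation} in the native space of each factor --- $\mathcal N_\phi$ for $H_2$, and $\mathcal N_\varphi$ for $f$, which is legitimate since $\alpha\gamma>d/2$ even when $f\notin\mathcal N_\phi$ --- to obtain interpolants $P_i\in\mathcal P_{s^*}^d$ of degree $s^*=\max\{\lceil c_7'/q_\Lambda\rceil,m\}$ with $P_i(x_j)=H_i(x_j)$, $\|P_i\|_{L^2}\le6\|H_i\|_{L^2}$ and interpolation error $\lesssim h_\Lambda^{\gamma}$ (resp.\ $h_\Lambda^{\alpha\gamma}$). The interpolation identity lets me replace the discrete sum by $\sum_i w_{i,s}P_1(x_i)P_2(x_i)$, and since $Q_1Q_2\in\mathcal P_{2m}^d\subseteq\mathcal P_s^d$ is integrated exactly I arrive at the clean splitting
\[
\mathcal E=\int_{\mathbb S^d}(H_1H_2-Q_1Q_2)\,d\omega-\sum_i w_{i,s}\bigl(P_1P_2-Q_1Q_2\bigr)(x_i).
\]

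The first term is handled by writing $H_1H_2-Q_1Q_2=(H_1-Q_1)H_2+Q_1(H_2-Q_2)$ and applying Cauchy--Schwarz with the Jackson bounds, producing $s^{-\gamma}$ in the $\phi,\phi$ case and $(\lambda^{-u}s^{-\gamma}+s^{-\alpha\gamma})$ in the $\varphi,\phi$ case. The second term is the polynomial part: expanding $P_1P_2-Q_1Q_2=(P_1-Q_1)P_2+Q_1(P_2-Q_2)$, every factor is now a polynomial, so Cauchy--Schwarz followed by the Marcinkiewicz--Zygmund inequality of Lemma \ref{Lemma:1} (exact integration for the degree-$\le m$ factors $Q_1,Q_2$, and the blow-up factor $(s^*/s)^{d/2}$ for the degree-$s^*$ factors) bounds it by $(s^*/s)^{d}$ times the same Jackson-type rates, where $\|P_i-Q_i\|_{L^2}\le\|P_i-H_i\|_{L^2}+\|H_i-Q_i\|_{L^2}\lesssim s^{-\gamma}$ (using $h_\Lambda\le c/s$). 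Since $s^*/s\lesssim1+q_\Lambda^{-1}s^{-1}$, this yields exactly the prefactor $(1+q_\Lambda^{-1}s^{-1})^d$. Finally, the $\lambda$-dependence is supplied by spectral calculus on $[0,\kappa]$: $\|H_i\|_\phi\le\lambda^{-u}$ (or $\lambda^{-v}$) and $\|H_i\|_{L^2}=\|L_\phi^{1/2}(L_\phi+\lambda I)^{-u}g\|_\phi\le\min\{\kappa^{1/2-u},\kappa^{1/2}\lambda^{-u}\}$, and combining the two product terms collapses $\lambda^{-u}+\lambda^{-v}$ (and the large-$\lambda$ regime) into $\lambda^{-\max\{u,v\}}$.

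The main obstacle I anticipate is the bookkeeping that reconciles the two distinct frequency scales --- the exactness degree $s$ of the quadrature and the (generally larger) interpolation degree $s^*\sim q_\Lambda^{-1}$ forced by Lemma \ref{Lemma:polynomial-interpolation} --- while keeping the interpolation error $h_\Lambda^\gamma$ comparable to the target $s^{-\gamma}$; this is exactly where the bound $h_\Lambda\lesssim s^{-1}$ and the Marcinkiewicz--Zygmund factor $(s^*/s)^{d/2}$ must be balanced, and it is the origin of the $(1+q_\Lambda^{-1}s^{-1})^d$ correction. A secondary subtlety, specific to \eqref{product-quadruare-222}, is that for $\alpha<1$ the target $f$ lies in $\mathcal N_\varphi$ but not in $\mathcal N_\phi$, so the interpolation lemma must be invoked in the rougher native space $\mathcal N_\varphi$ (valid because $\alpha\gamma>d/2$), and it is this that produces the extra $s^{-\alpha\gamma}$ term in place of an $s^{-\gamma}$ one.
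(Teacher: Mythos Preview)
Your proposal does not address the stated lemma at all: you have written a proof of Proposition~\ref{proposition:quadrature-for-convolution} (the bounds \eqref{product-quadruare-111} and \eqref{product-quadruare-222} on ${WCE}^{\phi,\phi}_{\Lambda,\lambda,W,u,v}$ and ${WCE}^{\varphi,\phi}_{\Lambda,\lambda,W,u}$), and in doing so you \emph{invoke} Lemma~\ref{Lemma:polynomial-interpolation} as a black box rather than establish it. The lemma itself asserts the existence of a polynomial interpolant $P_{s^*}$ of $f$ on $\Lambda$ with controlled $\mathcal N_\phi$- and $L^2$-norms and with the $L^2$ approximation rate $h_\Lambda^\gamma$; nothing in your write-up constructs such a $P_{s^*}$ or bounds its norms. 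In the paper this lemma is not proved either: it is quoted from \cite[Corollary~5.4]{narcowich2007direct}, where the interpolant is built by band-limiting the kernel interpolant and the norm/error bounds come from the norming-set and duality machinery developed there.

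If your intended target was in fact Proposition~\ref{proposition:quadrature-for-convolution}, then your plan is essentially the paper's own argument: the same two-scale decomposition (degree-$\lfloor s/2\rfloor$ $L^2$-projections $Q_i$ plus degree-$s^*\sim q_\Lambda^{-1}$ interpolants $P_i$ from Lemma~\ref{Lemma:polynomial-interpolation}), the same use of exactness of $Q_1Q_2$, Cauchy--Schwarz plus Jackson for the continuous remainder, and Lemma~\ref{Lemma:1} to control the discrete sums, producing the factor $(s^*/s)^d\lesssim(1+q_\Lambda^{-1}s^{-1})^d$. One cautionary note: you assume $h_\Lambda\lesssim s^{-1}$ to convert the $h_\Lambda^\gamma$ interpolation error into $s^{-\gamma}$; this is not part of the hypotheses of the proposition as stated (it only assumes a positive quadrature rule of degree $s$), and the paper's proof in fact leaves the same gap---the terms $\mathcal A_1,\mathcal A_2$ there carry $h_\Lambda^\gamma$, not $s^{-\gamma}$, and the passage to the final $s$-only bound tacitly uses the same relation. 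So your reasoning there is consistent with the paper, but be aware it is an implicit extra assumption.
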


 

We then present the proof of    Proposition \ref{proposition:quadrature-for-convolution} as follows.

\begin{proof}[Proof of Proposition \ref{proposition:quadrature-for-convolution}]
 
  Let $P_f^*$ and $P_g^*$ be, respectively, the corresponding interpolating polynomials in $\mathcal P^d_{s^*}$ 
for $ f\in\mathcal N_\varphi$ and $\eta_{ \lambda,u}( L_\phi)g\in\mathcal N_\phi$ in Lemma \ref{Lemma:polynomial-interpolation} with $s^*= c_7'/q_{\Lambda}$. Then   $\|P_f^*\|_\varphi\leq 6\|  f\|_\varphi $, $\|P_g^*\|_\phi\leq 6\|\eta_{ \lambda,u}( L_\phi)g\|_\phi$,
\begin{equation}\label{inter-poly-1}
     f(x_i)=P_f^*(x_i),\quad \|f-P_f^*\|_{L^2} \leq c_8'h_{\Lambda}^{\alpha \gamma} \|f\|_\varphi,  
\end{equation}
and
\begin{equation}\label{inter-poly-2}
     \eta_{\lambda,u}( L_\phi)g(x_i)=P_g^*(x_i), \quad   
      \| \eta_{\lambda,u}( L_\phi)g-P_g^*\|_{L^2}\leq c_8' h_{\Lambda}^{\gamma} \|\eta_{\lambda,u}( L_\phi)g\|_\phi.
\end{equation}
Therefore, it follows from H\"{o}lder's inequality, \eqref{inter-poly-1} and \eqref{inter-poly-2}   
that 
\begin{eqnarray*} 
    \left|\int_{\mathbb{S}^{d}} f  (x)\eta_{\lambda,u}(L_\phi)g   (x) d \omega(x)-\sum_{x_i\in\Lambda} w_{i, s} \left[f(x_{i}^* )\eta_{\lambda,u}(L_\phi)g  (x_i )\right]\right|  
     \leq 
  \mathcal A_1+\mathcal A_2+\mathcal A_3,  
\end{eqnarray*}
where
\begin{eqnarray*}
  \mathcal A_1&:=&\| f\|_{L^2}\|\eta_{\lambda,u}(L_\phi)g-P_g^*\|_{L^2},\\
  \mathcal A_2&=&
  \|P_g^*\|_{L^2}\|f-P_f^*\|_{L^2},\\
  \mathcal A_3&:=&
  \left|\int_{\mathbb{S}^{d}}  P_f^*(x)P_g^*(x) d \omega(x)-\sum_{x_i\in\Lambda} w_{i, s}P_f^*(x_i)P_g^*(x_i)\right|.
\end{eqnarray*}
Resorting to \eqref{inter-poly-1} and \eqref{inter-poly-2} again and noting $\|f\|_{L^2}\leq \|f\|_\varphi$, we derive  
\begin{eqnarray}
   \mathcal A_1
   &\leq& c_8'h_{\Lambda}^{\gamma} \|\eta_{\lambda,u}(L_\phi)g\|_\phi\|f\|_\varphi\leq c_8'\lambda^{-u}h_{\Lambda}^{\gamma} \|f\|_\varphi\|  g\|_\phi,  \label{bound-A1}\\
   \mathcal A_2 &\leq& 6c_8' h_{\Lambda}^{\alpha \gamma} \|f\|_\varphi\|g\|_\phi. \label{bound-A2}
\end{eqnarray}
It remains to bound $\mathcal A_3$. Let 
 $Q^*_1,Q_2^*\in\mathcal P_{[s/2]}^d$  be the projection of $P_f^*$ and $P_g^*$ onto $\mathcal P_{[s/2]}^d$, respectively, i.e. $\|Q_1\|_{L^2}\leq \|P_f^*\|_{L^2}$, $\|Q_2\|_{L^2}\leq \|P_g^*\|_{L^2}$, and
\begin{equation}\label{best.app-for-p}
     Q_1^*=\arg\min_{P\in \mathcal P_{[s/2]}^d}\|P_f^*-P\|_{L^2},\qquad
     Q_2^*=\arg\min_{P\in \mathcal P_{[s/2]}^d}\|P_g^*-P\|_{L^2},
\end{equation}
where $s$ is the degree of quadrature rule.
The well known Jackson inequality \cite{dai2006jackson} together with the fact $\|P_f^*\|_\varphi\leq 6\|f\|_\varphi $ and $\|P_g^*\|_\phi\leq 6\|\eta_{\lambda,u} (L_{\phi})g\|_\phi$ then yields
\begin{equation}\label{best-app-for-p-rate}
   \|P_f^*-Q_1\|_{L^2}\leq c_9's^{-\alpha \gamma}\|f\|_\varphi,\qquad \|P_g^*-Q_2\|_{L^2}\leq c_9's^{-\gamma}\| \eta_{\lambda,u}( L_\phi)g\|_\phi,
\end{equation}
where $c_9'$ is a constant depending only on $\gamma,\alpha,d$. Form H\"{o}lder's inequality again, we have
$
   \mathcal A_3\leq \sum_{k=1}^5\mathcal A_{3,k},
$
with
\begin{eqnarray*}
  \mathcal A_{3,1}&:=& \|P^*_f-Q_1\|_{L^2}\|P^*_g\|_{L^2},  \\
  \mathcal A_{3,2} &:=&   \|P^*_g-Q_2\|_{L^2}\|Q_1\|_{L^2},   \\
  \mathcal A_{3,3} &:=& \left(\sum_{x_i\in\Lambda}w_{i,s}(P^*_f(x_i)-Q_1(x_i))^2\right)^{1/2}\left(\sum_{x_i\in\Lambda}w_{i,s}(P^*_g(x_i))^2\right)^{1/2},\\
  \mathcal A_{3,4} &:=& \left(\sum_{x_i\in\Lambda }w_{i,s}(P^*_g(x_i)-Q_2(x_i))^2\right)^{1/2}\left(\sum_{x_i\in\Lambda}w_{i,s}(Q_1(x_i))^2\right)^{1/2},\\
   \mathcal A_{3,5} &:=&  \left|\int_{\mathbb{S}^{d}}  Q_1(x)Q_2(x) d \omega(x)-\sum_{x_i\in\Lambda} w_{i, s}Q_1(x_i)Q_2(x_i)\right|.
\end{eqnarray*}
Since $Q_1Q_2\in\mathcal P_s^d$, it is obvious that $\mathcal A_{3,5}=0$. Furthermore, 
 \eqref{best-app-for-p-rate} together with 
$\|Q_1\|_{L^2}\leq \|P_f^*\|_{L^2}$ and $\|Q_2\|_{L^2}\leq \|P_g^*\|_{L^2}$ yields
$$
   \mathcal A_{3,1}\leq 6c_9's^{-\alpha \gamma}\| f\|_\varphi\|\eta_{\lambda,u}(L_\phi)g\|_{L^2}\leq 6c_9'  s^{-\alpha\gamma}
   \|f\|_\varphi\|g\|_\phi 
$$
and
$$
   \mathcal A_{3,2}\leq  6c_9'\lambda^{-u}s^{-\gamma}\|f\|_\varphi\|g\|_\phi.
$$
According to \cref{Lemma:1} with $s'=s^*=c_7'q_{\Lambda}^{-1}$, we  have 
$$
   A_{3,3}\leq c_6'(c_7'q_\Lambda^{-1}s^{-1})^d \mathcal A_{3,1}\leq 6c_1'(c_2'q_\Lambda^{-1}s^{-1})^dc_9' s^{-\alpha \gamma}
   \|f\|_\varphi\|g\|_\phi,
$$
and
$$
   A_{3,4}\leq c_6'(c_7'q_\Lambda^{-1}s^{-1})^d \mathcal A_{3,2}\leq 6c_6'(c_7'q_\Lambda^{-1}s^{-1})^dc_9'\lambda^{-u}s^{-\gamma}
   \|f\|_\varphi\|g\|_\phi.
$$
Combining  the above five estimates, we have
\begin{equation}\label{Bound-A3}
   \mathcal A_{3}\leq
    6(c_6'(c_7'q_\Lambda^{-1}s^{-1})^d +2)
    c_9'(\lambda^{-1/2}s^{-\gamma}+s^{-\alpha\gamma})
   \|f\|_\varphi\|g\|_\phi. 
\end{equation}
We then have from  \eqref{bound-A1}, \eqref{bound-A2} and \eqref{Bound-A3}   that
\begin{eqnarray*}
  &&\left|\int_{\mathbb{S}^{d}}  f(x) \eta_{\lambda,u}(L_{\phi})g)  (x)d \omega(x)-\sum_{x_i\in\Lambda} w_{i, s} \left[f(x_{i}^* )\eta_{\lambda,u}( L_{\phi}+\lambda I) g(x_i )\right]\right|\\
  &\leq&
c_{10}'(1+q_\Lambda^{-1}s^{-1})^d(\lambda^{-1/2}s^{-\gamma}+s^{-\alpha\gamma})
   \|f\|_\varphi\|g\|_\phi,
\end{eqnarray*}
where $c_{10}'$ is a constant depending only on $\tau$, $\gamma,\alpha,u$ and  $d$. This proves \eqref{product-quadruare-222}. The proof of  \eqref{product-quadruare-111} is the same as above, we remove the details for the sake of brevity.
This completes the proof of  Proposition \ref{proposition:quadrature-for-convolution}.
\end{proof}

\subsection{Proofs of results in Section \ref{sec.spectral}}
 
To prove Proposition \ref{Theorem:operator-representation}, we need   two lemmas. The first one can be found in Lemma 7 in \cite{dicker2017kernel} for $u\geq 1$ and Theorem X. 1.1. in \cite{bhatia2013matrix} for $0<u\leq 1$.

\begin{lemma}\label{Lemma:cordes-11}
Let $u\geq 0$ and $A, B$ be positive operators on $\mathcal H$.  
 If  $\|A\|_{\mathcal H\rightarrow\mathcal H},\|B\|_{\mathcal H\rightarrow\mathcal H} \leq F$ for some $F\geq 0$, then
\begin{equation}\label{lip}
     \left\|A^u-B^u\right\|_{\mathcal H\rightarrow\mathcal H}\leq \max\{1,2u F^{u-1}\}\|A-B\|^{\min\{1,u\}}_{\mathcal H\rightarrow\mathcal H}.
\end{equation}
\end{lemma}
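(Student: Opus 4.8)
I would split the argument according to whether $u\le 1$ or $u>1$; the degenerate case $u=0$ is immediate since then $A^0=B^0=I$ while the right-hand side equals $\max\{1,0\}\cdot\|A-B\|^{0}_{\mathcal H\rightarrow\mathcal H}=1$.

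For $0<u\le 1$ one has $\min\{1,u\}=u$, so, since $\max\{1,2uF^{u-1}\}\ge 1$, it suffices to establish the sharp operator--Hölder inequality $\|A^u-B^u\|_{\mathcal H\rightarrow\mathcal H}\le\|A-B\|^{u}_{\mathcal H\rightarrow\mathcal H}$, which is Theorem~X.1.1 of \cite{bhatia2013matrix}. I would either quote it directly or recall its two ingredients: the Balakrishnan representation $t^{u}=\frac{\sin(\pi u)}{\pi}\int_0^{\infty}\frac{t}{t+\lambda}\,\lambda^{u-1}\,d\lambda$ and the resolvent identity $A(A+\lambda I)^{-1}-B(B+\lambda I)^{-1}=\lambda(B+\lambda I)^{-1}(A-B)(A+\lambda I)^{-1}$, whence the integrand has norm at most $\min\{2,\lambda^{-1}\|A-B\|\}$; splitting the $\lambda$-integral at $\lambda=\|A-B\|$ already yields the bound with an absolute constant, and the refinement to the sharp constant $1$ is the content of the cited theorem.

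For $u>1$ one has $\min\{1,u\}=1$, and I would prove the operator--Lipschitz bound $\|A^{u}-B^{u}\|_{\mathcal H\rightarrow\mathcal H}\le 2uF^{u-1}\|A-B\|_{\mathcal H\rightarrow\mathcal H}$; this implies the claim because $2uF^{u-1}\le\max\{1,2uF^{u-1}\}$ and, in the remaining subcase $2uF^{u-1}<1$, also $2uF^{u-1}\|A-B\|<\|A-B\|=\|A-B\|^{\min\{1,u\}}$. When $u$ is a positive integer this follows at once from the telescoping identity $A^{u}-B^{u}=\sum_{j=0}^{u-1}A^{j}(A-B)B^{u-1-j}$, each of the $u$ summands being bounded in norm by $F^{u-1}\|A-B\|$. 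For real $u>1$ the map $t\mapsto t^{u}$ is no longer operator monotone, but it is operator Lipschitz on the compact interval $[0,F]\supseteq\sigma(A)\cup\sigma(B)$ since its derivative $t\mapsto ut^{u-1}$ is monotone, hence of bounded variation, there; the resulting Lipschitz estimate with the stated constant is exactly Lemma~7 of \cite{dicker2017kernel}, which I would invoke rather than reconstruct the double-operator-integral (or iterated-resolvent) machinery behind it. Finally one assembles the two regimes with the case distinction above.

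The main obstacle is precisely this last point, the non-integer regime $u>1$. The tempting elementary reduction $A^{u}=A^{\lfloor u\rfloor}\,A^{\,u-\lfloor u\rfloor}$ combined with the operator--Hölder bound on the fractional factor produces a term of size $F^{\lfloor u\rfloor}\|A-B\|^{\,u-\lfloor u\rfloor}$, whose ratio to the target $F^{u-1}\|A-B\|$ equals $\bigl(F/\|A-B\|\bigr)^{\lfloor u\rfloor+1-u}$ and blows up as $\|A-B\|\to 0$; so the fractional powers genuinely have to be controlled through the operator-Lipschitz theory rather than by interpolation, which is why I would rely on \cite{dicker2017kernel} for that step and keep the remainder of the proof to the elementary constant-chasing described above.
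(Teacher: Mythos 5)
Your proposal is correct and follows essentially the same route as the paper, which simply invokes Theorem X.1.1 of \cite{bhatia2013matrix} for the regime $0<u\le 1$ and Lemma 7 of \cite{dicker2017kernel} for $u>1$; your additional case analysis reconciling the $\max\{1,2uF^{u-1}\}$ and $\|A-B\|^{\min\{1,u\}}$ factors is the right glue and is all that is needed beyond the two citations.
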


The second one focuses on spectral properties for $g_\lambda$.

\begin{lemma}\label{Lemma:media-1}
 Let $g_\lambda$ satisfy \eqref{condition1} and \eqref{condition2}.  
 For any $f\in\mathcal N_\phi$ and $u\in[0,1/2]$, there holds
\begin{equation}\label{media-11}
     \|J_{\phi} g_{\lambda}(L_{\phi,\Lambda,W})(L_\phi+\lambda I)^{u}f\|_{L^2}
  \leq  \sqrt{2}b \mathcal Q_{\Lambda,\lambda,W}^{1+u}\lambda^{u-1/2}\|f\|_\phi. 
\end{equation}
If $f^*\in\mathcal N_\varphi$ with $\varphi,\phi$ satisfying \eqref{kernel-relation}, then for any $\alpha'\in\mathbb R$, there holds
 \begin{equation}\label{media-12}
     \|(L_\phi+\lambda I)^{\alpha'}g_{\lambda}(L_{\phi})J_{\phi}^Tf^*\|_{\phi} 
  \leq  \sqrt{2}b\max\{\kappa^{\frac{\alpha+2\alpha'-1}{4}},1\} )\lambda^{\min\{\frac{\alpha+2\alpha'-1}2,0\}}\|f^*\|_\varphi.
\end{equation}
\end{lemma}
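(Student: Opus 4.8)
These are two spectral-calculus estimates of rather different character. Since $\|J_\phi h\|_{L^2}=\|L_\phi^{1/2}h\|_\phi$ for $h\in\mathcal N_\phi$ (the case $\beta=0$ of \eqref{norm-relation-123}), bounding \eqref{media-11} amounts to controlling $\|L_\phi^{1/2}g_\lambda(L_{\phi,\Lambda,W})(L_\phi+\lambda I)^{u}f\|_\phi$, an expression that mixes the \emph{two non-commuting} operators $L_\phi$ and $L_{\phi,\Lambda,W}$; this is the real work, and it must be routed through the comparison quantity $\mathcal Q_{\Lambda,\lambda,W}$ of Proposition~\ref{Prop:operator-sampling}, the Cordes inequality \eqref{Cordes}, and operator monotonicity. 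By contrast, \eqref{media-12} involves only functions of the single compact positive operator $L_\phi$, so it collapses to a one-variable supremum over the spectrum of $L_\phi$.

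For \eqref{media-11}, the plan is to insert resolvent powers of $L_{\phi,\Lambda,W}$ and isolate three manageable factors. First, $\|L_\phi^{1/2}(L_{\phi,\Lambda,W}+\lambda I)^{-1/2}\|\le\|(L_\phi+\lambda I)^{1/2}(L_{\phi,\Lambda,W}+\lambda I)^{-1/2}\|=\mathcal Q_{\Lambda,\lambda,W}$, using $L_\phi\preceq L_\phi+\lambda I$ and operator monotonicity of $t\mapsto t^{1/2}$. Second, $\|(L_{\phi,\Lambda,W}+\lambda I)^{-u}(L_\phi+\lambda I)^{u}\|$ is controlled by a bounded power of $\mathcal Q_{\Lambda,\lambda,W}$ via \eqref{Cordes}, which lets one trade $(L_\phi+\lambda I)^{u}$ for $(L_{\phi,\Lambda,W}+\lambda I)^{u}$. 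Third, the leftover $\|(L_{\phi,\Lambda,W}+\lambda I)^{1/2+u}g_\lambda(L_{\phi,\Lambda,W})\|$ is a function of $L_{\phi,\Lambda,W}$ alone, hence at most $\sup_{0<\sigma\le\kappa}(\sigma+\lambda)^{1/2+u}|g_\lambda(\sigma)|$; here I would use \eqref{condition1} in the interpolated form $\sigma^{t}|g_\lambda(\sigma)|=(\sigma|g_\lambda(\sigma)|)^{t}|g_\lambda(\sigma)|^{1-t}\le b^{t}(b/\lambda)^{1-t}=b\lambda^{t-1}$ for $0\le t\le 1$, together with the subadditivity $(\sigma+\lambda)^{1/2+u}\le\sigma^{1/2+u}+\lambda^{1/2+u}$, to reach a scalar bound of the form $Cb\lambda^{u-1/2}$. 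Multiplying the three factors gives \eqref{media-11} (and the bound is of genuine interest precisely when $\mathcal R_{\Lambda,\lambda,W,1/2,1/2}\le\tilde{c}<1/2$, which by Proposition~\ref{Prop:operator-sampling} makes $\mathcal Q_{\Lambda,\lambda,W}$ an absolute constant).

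For \eqref{media-12}, I would expand $f^*=\sum_{k}\sum_{\ell}\widehat{f^{*}}_{k,\ell}\,Y_{k,\ell}$ in spherical harmonics. Then $J_\phi^{T}$, $g_\lambda(L_\phi)$ and $(L_\phi+\lambda I)^{\alpha'}$ are all block-diagonal, multiplying the degree-$k$ block by $\hat\phi_k$, $g_\lambda(\hat\phi_k)$ and $(\hat\phi_k+\lambda)^{\alpha'}$ respectively, so $\|(L_\phi+\lambda I)^{\alpha'}g_\lambda(L_\phi)J_\phi^{T}f^*\|_\phi^2=\sum_k(\hat\phi_k+\lambda)^{2\alpha'}g_\lambda(\hat\phi_k)^2\hat\phi_k\sum_\ell(\widehat{f^{*}}_{k,\ell})^{2}$, while $\|f^*\|_\varphi^2=\sum_k\hat\phi_k^{-\alpha}\sum_\ell(\widehat{f^{*}}_{k,\ell})^{2}$ by \eqref{kernel-relation} (equivalently \eqref{source-condition}). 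Hence the claim is equivalent to the scalar inequality $\sup_{0<\sigma\le\kappa}(\sigma+\lambda)^{\alpha'}|g_\lambda(\sigma)|\,\sigma^{(1+\alpha)/2}\le Cb\,\max\{\kappa^{(\alpha+2\alpha'-1)/4},1\}\,\lambda^{\min\{(\alpha+2\alpha'-1)/2,0\}}$, which I would establish by case analysis on the sign of $\alpha+2\alpha'-1$ (and, inside each case, on the sign of $\alpha'$, which decides whether $(\sigma+\lambda)^{\alpha'}$ is estimated by a power of $\sigma+\lambda$ or by $\lambda^{\alpha'}$): when $\alpha+2\alpha'-1\le 0$, the interpolated bound above absorbs every positive power of $\sigma$ against $g_\lambda$ and produces the factor $\lambda^{(\alpha+2\alpha'-1)/2}$; when $\alpha+2\alpha'-1>0$, one first discards the surplus positive power of $\sigma+\lambda$ using $\sigma\le\kappa$, which produces a power of $\kappa$ and the factor $\lambda^{0}=1$.

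The main obstacle is the bookkeeping in \eqref{media-11}: tracking exactly which power of $\mathcal Q_{\Lambda,\lambda,W}$ is incurred when the $L_\phi$-resolvent powers are swapped for $L_{\phi,\Lambda,W}$-resolvent powers, which is where \eqref{Cordes} and operator monotonicity are indispensable. Once the spherical-harmonic diagonalisation is set up, \eqref{media-12} is routine, the only delicate point being the case split in the scalar supremum.
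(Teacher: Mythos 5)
Your proposal is correct and follows essentially the same route as the paper: for \eqref{media-11} the paper performs exactly your three-factor split, bounding $\|L_\phi^{1/2}(\cdot)\|_\phi$ by $\mathcal Q_{\Lambda,\lambda,W}$, trading $(L_\phi+\lambda I)^{u}$ for $(L_{\phi,\Lambda,W}+\lambda I)^{u}$ at the cost of $\mathcal Q_{\Lambda,\lambda,W}^{2u}$ via the Cordes inequality, and then estimating the remaining single-operator factor $(L_{\phi,\Lambda,W}+\lambda I)^{1/2+u}g_\lambda(L_{\phi,\Lambda,W})$ through \eqref{condition1}. For \eqref{media-12} the paper reaches the same scalar supremum by writing $f^*=\mathcal L_\phi^{\alpha/2}h^*$ with $\|h^*\|_{L^2}=\|f^*\|_\varphi$ and manipulating operator norms of functions of $\mathcal L_\phi$ rather than by your explicit spherical-harmonic block-diagonalisation, but the two presentations are equivalent and end in the same case split on the sign of $\alpha+2\alpha'-1$.
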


\begin{proof}
 Since $f\in\mathcal N_\phi$, it follows from \eqref{Cordes},  \eqref{def:Q} and \eqref{condition1} that 
\begin{eqnarray*}
   && \|J_{\phi} g_{\lambda}(L_{\phi,\Lambda,W})(L_\phi+\lambda I)^{u}f\|_{L^2}^2\\
   & =&
    \langle J_{\phi} g_{\lambda}(L_{\phi,\Lambda,W})(L_\phi+\lambda I)^{u}f,J_{\phi} g_{\lambda}(L_{\phi,\Lambda,W})(L_\phi+\lambda I)^{u}f\rangle_{L^2}  \\
    &=&
     \langle J_{\phi} ^TJ_{\phi} g_{\lambda}(L_{\phi,\Lambda,W})(L_\phi+\lambda I)^{u}f,g_{\lambda}(L_{\phi,\Lambda,W})(L_\phi+\lambda I)^{u}f\rangle_\phi\\
     &=&
     \langle L_\phi^{1/2} g_{\lambda}(L_{\phi,\Lambda,W})(L_\phi+\lambda I)^{u}f,L_\phi^{1/2} g_{\lambda}(L_{\phi,\Lambda,W})(L_\phi+\lambda I)^{u}f\rangle_\phi \\
     &\leq&
     \| (L_\phi+\lambda I)^{1/2}(L_{\phi,\Lambda,W}+\lambda I)^{-1/2}\|_{\phi\rightarrow\phi}^2\\
     &\times&
     \| (L_{\phi,\Lambda,W}+\lambda I)^{1/2} g_{\lambda}(L_{\phi,\Lambda,W})(L_{\phi,\Lambda,W}+\lambda I)^{u}\|^2_{\phi\rightarrow\phi}\\
     &\times&
     \|(L_{\phi,\Lambda,W}+\lambda I)^{-u}(L_\phi+\lambda I)^{u}\|_{\phi\rightarrow\phi}^2\|f\|_\phi^2\\
     &\leq&
      \mathcal Q_{\Lambda,\lambda,W}^{2+2u} \| (L_{\phi,\Lambda,W}+\lambda I)^{1/2} g_{\lambda}(L_{\phi,\Lambda,W})(L_{\phi,\Lambda,W}+\lambda I)^{u}\|^2_{\phi\rightarrow\phi} \|f\|_\phi^2\\
     &\leq&
      \mathcal Q_{\Lambda,\lambda,W}^{2+2u}\lambda^{2u-1} \|(L_{\phi,\Lambda,W}+\lambda I) g_{\lambda}(L_{\phi,\Lambda,W})\|_{\phi\rightarrow\phi}^2 \|f\|^2_\phi\\
     &\leq&
    2b^2  \mathcal Q_{\Lambda,\lambda,W}^{2+2u}\lambda^{2u-1}\|f\|^2_\phi,
\end{eqnarray*}
which verifies \eqref{media-11}. Similarly, due to \eqref{exchange-adjoint},
we obtain from  \eqref{Cordes}, \eqref{condition1} and Lemma \ref{Lemma:source-condition} that
\begin{eqnarray*}
   && \|(L_\phi+\lambda I)^{\alpha'}g_{\lambda}(L_{\phi})J^T_{\phi} f^*\|_{\phi}^2\\
   & =&
    \langle (L_\phi+\lambda I)^{\alpha'} g_{\lambda}(L_{\phi})J_{\phi}^Tf^*, (L_\phi+\lambda I)^{\alpha'} g_{\lambda}(L_{\phi})J_{\phi}^Tf^*\rangle_\phi \\
    &=&
     \langle J_{\phi} g_{\lambda}(L_{\phi})(L_\phi+\lambda I)^{2\alpha'} g_{\lambda}(L_{\phi})J_{\phi}^Tf^* , f^* \rangle_{L^2}\\
     &=&
     \langle g_{\lambda}(\mathcal L_{\phi})(\mathcal L_\phi+\lambda I)^{2\alpha'} g_{\lambda}(\mathcal L_{\phi})\mathcal L_\phi \mathcal L_\phi^{\frac{\alpha}{2}}h^* , \mathcal L_\phi^{\frac{\alpha}{2}}h^* \rangle_{L^2}\\
      &\leq&
      \|\mathcal L_{\phi}^{\frac{\alpha}{2}} g_{\lambda}(\mathcal L_{\phi})(\mathcal L_{\phi}+\lambda I)^{2\alpha'} g_{\lambda}(\mathcal L_{\phi})\mathcal L_{\phi}\mathcal L_{\phi}^{\frac{\alpha}{2}}\|_{0\rightarrow0}\|h^*\|_{L^2}^2\\
      &\leq&
     \|g_{\lambda}(\mathcal L_{\phi})\mathcal L_{\phi}   \|_{0\rightarrow0} \|(\mathcal L_{\phi}+\lambda I)^{\alpha+2\alpha'} g_{\lambda}(\mathcal L_{\phi}) \|_{0\rightarrow0}\|h^*\|_{L^2}^2\\
      &\leq&
      2b^2\|h^*\|_{L^2}^2\left\{\begin{array}{cc}
         \kappa^{\alpha+2\alpha'-1}  &  \mbox{if}\ \alpha+2\alpha'-1\geq 0,\\
         \lambda^{\alpha+2\alpha'-1}  & \mbox{if}\ \alpha+2\alpha'-1<0.
      \end{array}\right.
\end{eqnarray*}
This verifies \eqref{media-12} by noting $\|h^*\|_{L^2}=\|f^*\|_\varphi$  and 
 completes the proof of  Lemma \ref{Lemma:media-1}.
\end{proof}

\begin{proof}[Proof of Proposition \ref{Theorem:operator-representation}]
Define   
\begin{equation}\label{population-version-1}
      f^\diamond_{D,\lambda,W}=     g_{\lambda}(L_{\phi,\Lambda,W}) \mathcal L_{       \phi,\Lambda,W} f^*. 
\end{equation}
The triangle inequality yields
\begin{equation}\label{Error-dec-11}
    \|J_{\phi}f^\diamond_{D,W,\lambda}-f^*\|_{L^2}
    \leq  
     {\|J_{\phi}f^\diamond_{D,\lambda,W}-f^*\|_{L^2}} 
    +
     \|J_{\phi}(f_{D,W_s,\lambda}-f^\diamond_{D,\lambda,W})\|_{L^2} .
\end{equation} 
We at first bound the second term  by noting 
\begin{eqnarray*}
      &&
      J_{\phi}(f_{D,\lambda,W}-f^\diamond_{D,\lambda,W})
      =
    J_{\phi} g_\lambda(L_{\phi,\Lambda,W}) S^T_{D,W} {\bf y}_{D,W}- J_{\phi} g_{\lambda}(L_{\phi,\Lambda,W}) \mathcal L_{\phi,\Lambda,W} f^*.
\end{eqnarray*}
Since $L_\phi=J_\phi^TJ_\phi$, there holds
\begin{eqnarray*}
      \|J_{\phi}(  L_\phi+\lambda I)^{-1/2}f\|_ {L^2(\mathbb S^d)}^2
    &=&\langle J_{\phi}(  L_\phi+\lambda I)^{-1/2},J_{\phi}(  L_\phi+\lambda I)^{-1/2}\rangle_{L^2}\\
    &=&
    \langle L_\phi^{1/2}(  L_\phi+\lambda I)^{-1/2},L_\phi^{1/2}(  L_\phi+\lambda I)^{-1/2}\rangle_{L^2}, 
\end{eqnarray*}
which consequently follows
\begin{equation}\label{bound-operator-1111111}
    \|J_{\phi}(  L_\phi+\lambda I)^{-1/2}\|_{\phi\rightarrow0}
   \leq 1.
\end{equation} 
Therefore,  \eqref{def:P}, \eqref{def:Q} and \eqref{condition2} yield
\begin{eqnarray}\label{stability-error-operator-represe}
     &&\|J_{\phi}(f_{D,\lambda,W}-f^\diamond_{D,\lambda,W})\|_{L^2}
     \leq  \|J_{\phi}(  L_\phi+\lambda I)^{-1/2}\|_{\phi\rightarrow0} \nonumber\\
     &\times&\|(  L_\phi+\lambda I)^{1/2}g_{\lambda}(L_{\phi,\Lambda,W})(  L_\phi+\lambda I)^{1/2}\|_{\phi\rightarrow\phi} \nonumber\\
     &\times&\|(  L_\phi+\lambda I)^{-1/2}(S^T_{D,W} {\bf y}_{D,W}-\mathcal L_{\phi,\Lambda,W} f^*)\|_\phi \nonumber\\
     &\leq& 
     2b \mathcal Q_{\Lambda,\lambda,W}^2\mathcal P_{D,\lambda,W,0}.
\end{eqnarray}     
We then aim to bound the  first term in  \eqref{Error-dec-11}. Due to \eqref{population-version-1}, triangle inequality  yields
\begin{eqnarray*} 
    &&\|J_{\phi}f^\diamond_{D,\lambda,W}-f^*\|_{L^2}
    \leq 
  \overbrace{\|J_{\phi}g_\lambda(L_\phi)J_{\phi}^Tf^*-f^*\|_{L^2}}^{\mathcal B_1}\\ 
    &+&
    \overbrace{\|J_{\phi} [I-g_{\lambda}(L_{\phi,\Lambda,W})L_{\phi,\Lambda,W}] g_\lambda(L_\phi)J_{\phi}^Tf^*\|_{L^2}}^{\mathcal B_2}\nonumber\\
    &+&
      \overbrace{\|J_{\phi} g_{\lambda}(L_{\phi,\Lambda,W})[ L_{\phi,\Lambda,W} g_\lambda(L_\phi)J_{\phi}^Tf^*-L_\phi g_\lambda(L_\phi)J_{\phi}^Tf^*]\|_{L^2}}^{\mathcal B_3}\nonumber\\
      &+&
   \overbrace{\|J_{\phi} g_{\lambda}(L_{\phi,\Lambda,W})[J_{\phi}^Tf^*-L_\phi g_\lambda(L_\phi)J_{\phi,\psi}^Tf^*]\|_{L^2}}^{\mathcal B_4}\\
    &+&
   \overbrace{\|J_{\phi} g_{\lambda}(L_{\phi,\Lambda,W})(\mathcal  L_{\phi,\Lambda,W} f^*-J_{\phi}^Tf^*)\|_{L^2}}^{\mathcal B_5}.
   \end{eqnarray*}
We then estimate $\mathcal B_k$ for $k=1,\dots,5$, respectively. For $\mathcal B_1$, 
we get from  $\mathcal L_{\phi}=J_{\phi} J_{\phi}^T$,  \cref{condition2} and Lemma \ref{Lemma:source-condition}  that
\begin{eqnarray*} 
   \mathcal B_1
  &=&\|  ( g_{\lambda}(\mathcal L_{\phi})\mathcal L_{\phi}-I) \mathcal L_{\phi}^{\frac{\alpha}{2}}h^*\|_{L^2}
  \leq 
  \|  ( g_{\lambda}(\mathcal L_{\phi})\mathcal L_{\phi}-I) \mathcal L_{\phi}^{\frac{\alpha}{2}}\|_{0\rightarrow0}
  \|h^*\|_{L^2}\\
  &\leq&
   \tilde{C}_0 \lambda^{\min\{\frac{\alpha }{2},\nu_g\}}\|f^*\|_{\varphi}.
\end{eqnarray*}
  For $\mathcal B_2$, since for any $f\in\mathcal N_\phi$,  
\begin{eqnarray}\label{abc-123}
    &&\|J_{\phi} [I-g_{\lambda}(L_{\phi,\Lambda,W})L_{\phi,\Lambda,W}] f\|^2_{L^2} \nonumber\\
   & =&
    \langle L_\phi[I-g_{\lambda}(L_{\phi,\Lambda,W})L_{\phi,\Lambda,W}]f,I-g_{\lambda}(L_{\phi,\Lambda,W})L_{\phi,\Lambda,W}f\rangle_\phi \nonumber\\
    &=&
    \|  L^{1/2}_\phi[I-g_{\lambda}(L_{\phi,\Lambda,W})L_{\phi,\Lambda,W}]f\|^2_\phi,
\end{eqnarray}
we get from \eqref{def:Q} that
\begin{eqnarray*}
      &&\|J_{\phi} [I-g_{\lambda}(L_{\phi,\Lambda,W})L_{\phi,\Lambda,W}] f\|_{L^2}\\
  &\leq &
   \mathcal Q_{\Lambda,\lambda,W}\|  (L_{\phi,\Lambda,W}+\lambda I)^{1/2}[I-g_{\lambda}(L_{\phi,\Lambda,W})L_{\phi,\Lambda,W}]f\|_\phi.
\end{eqnarray*}
Since $0\leq \alpha\leq 1$, we get from the above estimate, \eqref{def:Q*}, \eqref{media-12} with $\alpha'=-\frac{\alpha- 1}{2}$, \eqref{condition2} and \eqref{Cordes} that
\begin{eqnarray*}
    \mathcal B_2
    &\leq &
     \mathcal Q_{\Lambda,\lambda,W}\|  (L_{\phi,\Lambda,W}+\lambda I)^{1/2}[I-g_{\lambda}(L_{\phi,\Lambda,W})L_{\phi,\Lambda,W}](L_{\phi,\Lambda,W}+\lambda I)^{\frac{\alpha- 1}{2}} \|_{\phi\rightarrow\phi}\\
    &\times&\|(L_{\phi,\Lambda,W}+\lambda I)^{-\frac{\alpha- 1}{2}}(L_{\phi}+\lambda I)^{\frac{\alpha -1}{2}}\|_{\phi\rightarrow\phi}\|(L_{\phi}+\lambda I)^{-\frac{\alpha- 1}{2}}g_{\lambda}(L_{\phi})J_{\phi }^Tf^*\|_\phi\\
    &\leq&
    \sqrt{2}b\tilde{C}_0 \|f^*\|_\varphi\mathcal Q_{\Lambda,\lambda,W}( \mathcal Q_{\Lambda,\lambda,W}^*)^{1-\alpha }
    \lambda^{\min\{\frac{\alpha }{2},\nu_g\}}. 
\end{eqnarray*}
For $\mathcal B_3$, we get from \eqref{media-11} with $u=1/2$ and 
$$
    f=(L_\phi+\lambda I)^{-1/2}[ L_{\phi,\Lambda,W} g_\lambda(L_\phi)J_{\phi}^Tf^*-L_\phi g_\lambda(L_\phi)J_{\phi}^Tf^*],
$$
\eqref{Def.RD}, \eqref{media-12} with $\alpha'=1/2$  that 
\begin{eqnarray*}
     \mathcal B_3
     &\leq&
    \sqrt{2}b \mathcal Q_{\Lambda,\lambda,W}^{3/2}\| (L_\phi+\lambda I)^{-1/2} 
    [ L_{\phi,\Lambda,W} g_\lambda(L_\phi)J_{\phi,\psi}^Tf^*-L_\phi g_\lambda(L_\phi)J_{\phi}^Tf^*]\|_\phi\\
    &\leq&
    \sqrt{2}b \mathcal Q_{\Lambda,\lambda,W}^{3/2}
    \| (L_\phi+\lambda I)^{-1/2}(L_{\phi,\Lambda,W}-L_\phi)(L_\phi+\lambda I)^{-1/2} \|_{\phi\rightarrow\phi}
    \|
 (L_\phi+\lambda I)^{1/2}g_\lambda(L_\phi)J_{\phi,\psi}^Tf^*\|_\phi\\
 &\leq&
 2b^2 \max\{\kappa^{\frac{\alpha+2\alpha'-1}{4}},1\}\mathcal Q_{\Lambda,\lambda,W}^{3/2}\mathcal R_{\Lambda,\lambda,W,1/2,1/2}  \|f^*\|_\varphi.
\end{eqnarray*}
For $\mathcal B_4$, 
  we have from Lemma \ref{Lemma:source-condition},
  \eqref{exchange-adjoint} and \eqref{condition2} that 
\begin{eqnarray*}
    &&\|J_{\phi}^Tf^*-L_\phi g_\lambda(L_\phi)J_{\phi}^Tf^*\|_\phi^2
    =
    \langle [I-L_\phi g_\lambda(L_\phi)]J_{\phi}^Tf^*,
    [I-L_\phi g_\lambda(L_\phi)]J_{\phi}^Tf^*\rangle_\phi\\
    &=&
     \langle J_{\phi} [I-L_\phi g_\lambda(L_\phi)]^2J_{\phi}^T \mathcal L_{\phi}^{\frac{\alpha }{2}}h^*,
     \mathcal L_{\phi}^{\frac{\alpha }{2}}h^*\rangle_{L^2}\\
     &=&
     \langle \mathcal L_{\phi}^{\frac{\alpha }{2}} J_{\phi} [I-L_\phi g_\lambda(L_\phi)]^2J_{\phi}^T\mathcal L_{\phi }^{\frac{\alpha-\beta}{2}}h^*,
     h^*\rangle_{L^2}\\
     &=&
     \langle (J_{\phi }J_{\phi }^T)^{\frac{\alpha }{2}}  [I-J_{\phi }J_{\phi }^Tg_\lambda(J_{\phi }J_{\phi }^T)]^2 (J_{\phi }J_{\phi }^T)^{\frac{2+\alpha }{2}}h^*,
     h^*\rangle_{L^2}\\
     &\leq&
     \|(J_{\phi }J_{\phi }^T)^{\frac{\alpha }{2}}  [I-J_{\phi }J_{\phi }^Tg_\lambda(J_{\phi }J_{\phi }^T)]^2 (J_{\phi }J_{\phi }^T)^{\frac{2+\alpha }{2}}\|_{0\rightarrow0}
    \| h^*\|_{L^2}^2\\
    &\leq&
     \tilde{C}_0^2\lambda^{\min\{1+\alpha,\nu_g\}} \| h^*\|_{L^2}^2.
\end{eqnarray*}
Plugging the above estimate into \eqref{media-11} with $u=0$ and $f=J_{\phi}^Tf^*-L_\phi g_\lambda(L_\phi)J_{\phi}^Tf^*$, we get
$$
     \mathcal B_4\leq \left(2b^2  \mathcal Q_{\Lambda,\lambda,W}^2\tilde{C}_0^2\lambda^{\alpha} \| f^*\|_\varphi^2  \right)^{1/2}
     =\sqrt{2}b\tilde{C}_0 \mathcal Q_{\Lambda,\lambda,W}\| f^*\|_\varphi\lambda^{\min\{\frac{\alpha}2,\nu_g\}}.
$$
For $\mathcal B_5$,  \eqref{Lemma:media-1} with $u=1/2$ and $f=(L_\phi+\lambda I)^{-1/2}(\mathcal  L_{\phi,\Lambda,W} f^*-J_{\phi}^Tf^*)$ yields
\begin{eqnarray*}
    \mathcal B_5 \leq  
    \sqrt{2}b \mathcal Q_{\Lambda,\lambda,W}^{3/2} \mathcal S_{\Lambda,\lambda,W,1/2}\|f^*\|_{L^2}
    \leq 
    \sqrt{2}b \mathcal Q_{\Lambda,\lambda,W}^{3/2} \mathcal S_{\Lambda,\lambda,W,1/2}\|f^*\|_\varphi.
 \end{eqnarray*}
Combining all above estimates, we obtain
 \begin{eqnarray*} 
    &&\|J_{\phi}f^\diamond_{D,\lambda,W}-f^*\|
    \leq
    \sqrt{2}\tilde{C}_0\lambda^{\min\{\frac{\alpha}{2},\nu_g\}}\|f^*\|_{\varphi}\big(1+b  \mathcal Q_{\Lambda,\lambda,W}
    +  b\mathcal Q_{\Lambda,\lambda,W}(\mathcal Q_{\Lambda,\lambda,W}^*)^{1-\alpha })    \\
   &+&
     2b^2 \max\{\kappa^{\frac{\alpha+2\alpha'-1}{4}},1\}\mathcal Q_{\Lambda,\lambda,W}^{3/2}\mathcal R_{\Lambda,\lambda,W,1/2,1/2}  \|f^*\|_\varphi
      + 
     \sqrt{2}b \mathcal Q_{\Lambda,\lambda,W}^{3/2} \mathcal S_{\Lambda,\lambda,W,1/2}\|f^*\|_\varphi.
\end{eqnarray*}
Since
$ \mathcal R_{\Lambda,\lambda,W,1/2,1/2}\leq \tilde{c}<1/2, 
$
we  plug   the above inequality and \eqref{stability-error-operator-represe} into \eqref{Error-dec-11} and obtain
\begin{eqnarray*}
     \|J_{\phi}f^\diamond_{D,W,\lambda}-f^*\|_{L^2}
     \leq 
      C_1'  \mathcal P_{D,\lambda,W,0}
      +
      C_1'(\lambda^{\min\{\frac{\alpha}{2},\nu_g\}}  
    + 
       \mathcal R_{\Lambda,\lambda,W,1/2,1/2} 
      + 
      \mathcal S_{\Lambda,\lambda,W,1/2})),
\end{eqnarray*}
where 
\begin{eqnarray*}
 &&C_1' :=\|f^*\|_\varphi\max\{\frac{2b}{1-\tilde{c}},\sqrt{2}\tilde{C}_0\big(1+b  (1-\tilde{c})^{-1/2}
    +  b  (1-2\tilde{c})^{-1}),     \\
   &&\sqrt{2}b (1-\tilde{c})^{-1/2}  \tilde{C}_0 \max\{1,(\alpha-1) \kappa^{\alpha-2}\},
    \max\{\kappa^{\frac{\alpha+2\alpha'-1}{4}},1\}(1-\tilde{c})^{-3/4}  (2b^2+\sqrt{2}b).
\end{eqnarray*}  
This verifies \eqref{Approximation-operator-represen}. 
If $\alpha\geq 1$, we have $f^*\in\mathcal N_\phi$, then we get from \eqref{Holder-type-norm} that
\begin{eqnarray*}
    \| f_{D,\lambda,W}-f^*\|_{\psi} 
   &\leq&
   a_{\tilde{c},\beta} \lambda^{-\beta/2} 
   \left(\lambda^{1/2}\|f_{D,\lambda,W}-f^*\|_\phi+ \|f_{D,\lambda,W}-f^*\|_{L^2} \right)\\
   &\leq&
    a_{\tilde{c},\beta}  \lambda^{-\beta/2} \|(L_\phi+\lambda I)^{1/2}(f_{D,\lambda,W}-f^*)\|_\phi.
\end{eqnarray*}
where $a_{\tilde{c},\beta}:=\frac{1}{(1-\tilde{c})^{(1-\beta)/2}}\left(1+\sqrt{\frac{1-\tilde{c}}{1-2\tilde{c}}}\right) $. Define
\begin{equation}\label{def.noise-free0}
    f^{\star}_{D,\lambda,W}
 :=g_\lambda(L_{\Lambda,\lambda,W})L_{\phi,\Lambda,W}f^*.
\end{equation}
We have from  
$ \mathcal R_{\Lambda,\lambda,W,1/2,1/2}\leq \tilde{c}<1/2,$  \eqref{def:P}, \eqref{def:Q} and \eqref{condition1} that
\begin{eqnarray*}
     \|(L_\phi+\lambda I)^{1/2}(f_{D,\lambda,W}-f^{\star}_{D,\lambda,W})\|_\phi
    \leq
    2b\mathcal Q_{\Lambda,\lambda,W}^2
    \mathcal P_{D,\lambda,W,1}
    \leq 2b(1-\tilde{c})^{-1} \mathcal P_{D,\lambda,W,1}.
\end{eqnarray*}
Moreover, it follows from Lemma \ref{Lemma:source-condition} and \eqref{def:Q} that there exists an $h^*\in L^2$ satisfying $\|h^*\|_{L^2}=\|f^*\|_\varphi$ such that
\begin{eqnarray*}
    &&\|(L_\phi+\lambda I)^{1/2}(f^{\star}_{D,\lambda,W}-f^*)\|_\phi=
 \|(L_\phi+\lambda I)^{1/2}(g_\lambda(L_{\phi,\Lambda,W})L_{\phi,\Lambda,W}-I)
  f^* \|_\phi\\
  &\leq&
  \mathcal Q_{\Lambda,\lambda,W} \|(L_{\phi,\Lambda,W}+\lambda I)^{1/2}(g_\lambda(L_{\phi,\Lambda,W})L_{\phi,\Lambda,W}-I)
     L_\phi^{\frac{\alpha-1}{2}}\|_{\phi\rightarrow\phi} \|h^* \|_{L^2}.
\end{eqnarray*}
If $1\leq \alpha\leq 2$, we get from \eqref{def:Q} and \eqref{condition2} that
\begin{eqnarray*}
    &&\|(L_\phi+\lambda I)^{1/2}(f^{\star}_{D,\lambda,W}-f^*)\|_\phi\\
    &\leq&
   \mathcal Q_{\Lambda,\lambda,W}^\alpha\|f^*\|_\varphi
   \|(L_{\phi,\Lambda,W}+\lambda I)^{\alpha/2}(g_\lambda(L_{\phi,\Lambda,W})L_{\phi,\Lambda,W}-I)\|_{\phi\rightarrow\phi}\\
   &\leq&
   2\tilde{C}_0\mathcal Q_{\Lambda,\lambda,W}^\alpha\lambda^{\min\{\frac{\alpha}{2},\nu_g\}}.
\end{eqnarray*}
If $\alpha>2$, Lemma \ref{Lemma:cordes-11} together with \eqref{Def.RD} implies
\begin{eqnarray*}
    &&\|(L_\phi+\lambda I)^{1/2}(f^{\star}_{D,\lambda,W}-f^*)\|_\phi\\
    &\leq&
   \mathcal Q_{\Lambda,\lambda,W}\|f^*\|_\varphi
   \|(L_{\phi,\Lambda,W}+\lambda I)^{\alpha/2}(g_\lambda(L_{\phi,\Lambda,W})L_{\phi,\Lambda,W}-I)\|_{\phi\rightarrow\phi}\\
   &+&
   \mathcal Q_{\Lambda,\lambda,W}\|f^*\|_\varphi
   \|(L_{\phi,\Lambda,W}+\lambda I)^{1/2}(g_\lambda(L_{\phi,\Lambda,W})L_{\phi,\Lambda,W}-I)\|_{\phi\rightarrow\phi}\|L_{\phi,\Lambda,W}^{\frac{\alpha-1}{2}}-L_\phi^{\frac{\alpha-1}{2}}\|_{\phi\rightarrow\phi}\\ 
   &\leq&
   2\tilde{C}_0\mathcal Q_{\Lambda,\lambda,W}
   (\lambda^{\min\{\frac{\alpha}2,\nu_g\}}+\lambda^{\min\{\frac{1}{2},\nu_g\}}\max\{1,(\alpha-1) \kappa^{\alpha-2}\}\mathcal R_{\Lambda,\lambda,W,0,0}).
\end{eqnarray*}
Since
$ \mathcal R_{\Lambda,\lambda,W,1/2,1/2}\leq \tilde{c}<1/2,$, we obtain from \eqref{def:Q} that 
\begin{eqnarray}\label{Apprxo.1234}
    \|(L_\phi+\lambda I)^{1/2}(f^{\star}_{D,\lambda,W}-f^*)\|_\phi
    \leq
 C_2'(\lambda^{\min\{\frac{\alpha}{2},\nu_g\}}+\lambda^{\min\{\frac{1}{2},\nu_g\}}\mathcal R_{\Lambda,\lambda,W,0,0}\mathbb I_{\alpha>2}),
\end{eqnarray}
where $C_2':=2\tilde{C}_0 (1-\tilde{c})^{-2}\max\{1,(\alpha-1) \kappa^{\alpha-2}\}$.
All these yields
\begin{eqnarray}\label{error-pre-sobolev}
    \|(L_\phi+\lambda I)^{1/2}(f_{D,\lambda,W}-f^*)\|_\phi&\leq &
     C_3'(\lambda^{\min\{\frac{\alpha}{2},\nu_g\}}+\lambda^{\min\{\frac{1}{2},\nu_g\}}\mathcal R_{\Lambda,\lambda,W,0,0}\mathbb I_{\alpha>2})\nonumber\\
     &+&
     2b(1-\tilde{c})^{-1} \mathcal P_{D,\lambda,W,1}
\end{eqnarray}
Plugging the above estimate into \eqref{Holder-type-norm}, we have
\begin{eqnarray*} 
     \| f_{D,\lambda,W}-f^*\|_{\psi} 
       &\leq& 
       2ba_{\tilde{c},\beta}   (1-\tilde{c})^{-1} \lambda^{-\beta/2}\mathcal P_{D,\lambda,W,1}\\
       &+&
       C_2'a_{\tilde{c},\beta} \lambda^{-\beta/2}(\lambda^{\min\{\frac{\alpha}{2},\nu_g\}}+\lambda^{\min\{\frac{1}{2},\nu_g\}}\mathcal R_{\Lambda,\lambda,W,0,0}\mathbb I_{\alpha>2}).
\end{eqnarray*}
This completes the proof of Theorem \ref{Theorem:operator-representation}. 
\end{proof}

\begin{proof}[Proof of Corollary \ref{Corollary:app-rate-quasi}]

Since $\lambda=|\Lambda|^{-\frac{2\gamma}{2\gamma \alpha +d}}$ and $s=c_\diamond|\Lambda|^{1/d}$, there exists a $C'_3$ satisfying the condition of Proposition \ref{proposition:quadrature-for-convolution} such that 
$
     \lambda\geq C's^{-2\gamma}.  
$
Under this circumstance,   
it follows from Theorem \ref{Theorem:operator-representation} and Corollary \ref{Cor:quasi-uniform-integral}  that  with confidence $1-\delta$, there holds
   $$
   \|J_{\phi}f_{D,W_s,\lambda}-f^*\|_{L^2} 
   \leq  \max\{C_3',\bar{C}_2\}(|\Lambda|^{-1/2}\lambda^{-\frac{d}{4\gamma}}\log\frac6\delta
   +\lambda^{\min\{\frac{\alpha}{2},\nu_g\}}+s^{-\alpha\gamma}).
$$
Since $\nu_g\geq \frac{\alpha}{2}$ and $\alpha\gamma>d/2$, 
direct computation yields \eqref{approxiamtion-error-random}. This
  proves Corollary \ref{Corollary:app-rate-quasi}.
 \end{proof}
\begin{proof}[Proof of Corollary \ref{Corollary:large-optimal}]
 Due to \eqref{Approximation-operator-represen-large} and Corollary \ref{Cor:quasi-uniform-integral}, with confidence $1-\delta$, there holds
\begin{eqnarray*}
    \|f^\diamond_{D,W,\lambda}-f^*\|_{\psi}
      \leq  
       \bar{C}_1\lambda^{-\beta}   (C_9|\Lambda|^{-1/2}\lambda^{-d/(4\gamma)}\log\frac6\delta
      + 
        \lambda^{\alpha/2}+C_9c_\diamond^{-\gamma} \lambda^{1/2}|\Lambda|^{-\gamma/d}).  
\end{eqnarray*}
The same argument as that in the proof of Corollary \ref{Corollary:app-rate-quasi} then yields that with confidence $1-\delta$, there holds
$$
   \|f^\diamond_{D,W,\lambda}-f^*\|_{\psi}
   \leq C_4'|\Lambda|^{- \frac{2(\alpha-\beta)\gamma}{2\alpha\gamma+d}}\log\frac{6}{\delta},
$$
where $C_4'$ is a constant independent of $|\Lambda|,\delta,\lambda$.
This completes the proof of Corollary \ref{Corollary:large-optimal}.
\end{proof}

\subsection{Proofs of results in Section \ref{Sec:parameter}}

\begin{proof}[Proof of Proposition \ref{Proposition:stopping-rule}]
Due to \eqref{def:Q}, we have from the triangle inequality that
\begin{eqnarray*}
     &&\left\|(  L_{\phi,D,W}+\lambda I)^{1/2} ( f_{D,W,\lambda}-f_{D,W,\lambda'})\right\|_\phi\\
     &\leq&
     \mathcal Q_{\Lambda,\lambda,W}
    (
      \|(  L_{\phi}+\lambda I)^{1/2} ( f_{D,W,\lambda}-f^*)\|_\phi+\|(  L_{\phi}+\lambda I)^{1/2} (f_{D,W,\lambda'}-f^*) \|_\phi).
\end{eqnarray*}
Then, it follows from \eqref{error-pre-sobolev}, $\mathcal R_{\Lambda,\lambda,W,1/2,1/2}, \mathcal R_{\Lambda,\lambda',W,1/2,1/2} \leq \tilde{c}<1/2$, $\lambda>\lambda'$  and \eqref{def:Q} that 
\begin{eqnarray*}
     &&\left\|(  L_{\phi,D,W}+\lambda I)^{1/2} ( f_{D,W,\lambda}-f_{D,W,\lambda'})\right\|_\phi\\
     &\leq&
      2(1-\tilde{c})^{-1/2} C_2'(\lambda^{\min\{\frac{\alpha}{2},\nu_g\}}+\lambda^{\min\{\frac{1}{2},\nu_g\}}\mathcal R_{\Lambda,\lambda,W,0,0}\mathbb I_{\alpha>2} )\\
      &+&
      8b(1-\tilde{c})^{-3/2} \mathcal P_{D,\lambda',W,1},
\end{eqnarray*}
where we used the fact that $\mathcal P_{D,\lambda,W,1}$ decreases with respect to $\lambda$.
 This completes the proof of Proposition \ref{Proposition:stopping-rule} by noting $C_2':=2\tilde{C}_0 (1-\tilde{c})^{-2}\max\{1,(\alpha-1) \kappa^{\alpha-2}\}$. 
\end{proof}

We then use Proposition \ref{Proposition:stopping-rule} to prove Theorem \ref{Theorem:Lepskii}.

\begin{proof}[Proof of Theorem \ref{Theorem:Lepskii}]
Due to the definition of $K_q$ and $s=c_\diamond|\Lambda|^{1/d}$, there exists a $k_0\in[1,K_q]$ such that $\lambda_{k_0}=q_0q^{k_0}\stackrel{d}\sim |\Lambda|^{-\frac{2\gamma}{2\alpha\gamma+d}}$. If $k_0\leq\hat{k}$, i.e., $\lambda_{k_0}\geq\lambda_{\hat{k}}$, we obtain from the definition of $\hat{k}$ that
\begin{eqnarray*}
     32c_*^2C_6b(3/2)^{-3/2}\lambda_{\hat{k}+1}^{-d/(4\gamma)} |\Lambda|^{-1/2}\log\frac6\delta
     <
     \left\|( L_{\phi,\Lambda,W_s}+\lambda_{\hat{k}+1} I)^{1/2} ( f_{D,W_s,\lambda_{\hat{k}+1}}-f_{D,W_s,\lambda_{\hat{k}}})\right\|_\phi.
\end{eqnarray*}
But \eqref{stop-1111} together with Corollary  \ref{Cor:quasi-uniform-integral} yields
\begin{eqnarray*}
     &&\left\|(  L_{\phi,\Lambda,W_s}+\lambda_{\hat{k}+1} I)^{1/2} ( f_{D,W_s,\lambda_{\hat{k}+1}}-f_{D,W_s,\lambda_{\hat{k}}})\right\|_\phi \nonumber\\
      &\leq &
      \bar{C}_5 (\lambda_{\hat{k}+1}^{\alpha/2}+C_9c_\diamond^{-\gamma} \lambda_{\hat{k}+1}^{1/2}|\Lambda|^{-\gamma/d} \mathbb I_{\alpha>2} )
      +
      16\bar{C}_5c_*^2C_6b(3/2)^{-3/2} \lambda_{\hat{k}+1}^{-d/(4\gamma)} |\Lambda|^{-1/2}\log\frac6\delta.
\end{eqnarray*}
Hence,
$$
 16\bar{C}_5c_*^2C_6b(3/2)^{-3/2} \lambda_{\hat{k}+1}^{-d/(4\gamma)} |\Lambda|^{-1/2}\log\frac6\delta
 \leq 
 \bar{C}_5 (\lambda_{\hat{k}+1}^{\alpha/2}+C_9c_\diamond^{-\gamma} \lambda_{\hat{k}+1}^{1/2}|\Lambda|^{-\gamma/d} 
 \mathbb I_{\alpha>2} ).
$$
Recalling $\lambda_{k_0}\geq\lambda_{\hat{k}}$ and
$$
  \lambda_{\hat{k}}^{-d/(4\gamma)}\leq 2\lambda_{\hat{k}+1}^{-d/(4\gamma)},
$$
we obtain from \eqref{error-pre-sobolev} and Corollary \ref{Cor:quasi-uniform-integral} that 
\begin{eqnarray*}
    &&\left\|(  L_{\phi,\Lambda,W_s}+\lambda_{\hat{k}} I)^{1/2} ( f_{D,W_s,\lambda_{\hat{k}}}-f^*)\right\|_\phi\\
    &\leq&
    C_5' (\lambda_{\hat{k}}^{\alpha/2}+ \lambda_{\hat{k}}^{1/2}|\Lambda|^{-\gamma/d} \mathbb I_{\alpha>2} 
      +
    \lambda_{\hat{k}}^{-d/(4\gamma)} |\Lambda|^{-1/2}\log\frac6\delta)\\
      &\leq&
      C_6' 
      |\Lambda|^{-\frac{\alpha\gamma}{2\alpha\gamma+d}},
\end{eqnarray*}
 where $C_5',C_6'$ are constants independent of $\delta,|\lambda|$.
If $k_0>\hat{k}$, i.e., $\lambda_{k_0}<\lambda_{\hat{k}}$,   the definition of $\hat{k}$,  \eqref{def:Q},  \eqref{error-pre-sobolev} and Corollary \ref{Cor:quasi-uniform-integral} yield
\begin{eqnarray*}
    &&\left\|(  L_{\phi,\Lambda,W_s}+\lambda_{\hat{k}} I)^{1/2} ( f_{D,W_s,\lambda_{\hat{k}}}-f^*)\right\|_\phi\\
    &\leq&
    \left\|(  L_{\phi,\Lambda,W_s}+\lambda_{\hat{k}} I)^{1/2} ( f_{D,W_s,\lambda_{\hat{k}}}-f_{D,W_s,\lambda_{k_0}})\right\|_\phi
    +
     \left\|(  L_{\phi,\Lambda,W_s}+\lambda_{\hat{k}} I)^{1/2} (  f_{D,W_s,\lambda_{k_0}}-f^*)\right\|_\phi\\
     &\leq&
     \sum_{k=\hat{k}+1}^{k_0}\left\|(  L_{\phi,\Lambda,W_s}+\lambda_{k+1} I)^{1/2} ( f_{D,W_s,\lambda_{\hat{k}+1}}-f_{D,W_s,\lambda_{\hat{k}}})\right\|_\phi+
    \sqrt{\frac23} \left\|(  L_{\phi}+\lambda_{\hat{k}} I)^{1/2} (  f_{D,W_s,\lambda_{k_0}}-f^*)\right\|_\phi\\
    &\leq&
     \sum_{k=\hat{k}+1}^{k_0}C_{LP}\lambda_k^{-d/(4\gamma)} |\Lambda|^{-1/2}\log\frac6\delta
     +
     \sqrt{\frac23} \left\|(  L_{\phi}+\lambda_{k_0} I)^{1/2} (  f_{D,W_s,\lambda_{k_0}}-f^*)\right\|_\phi\\
     &\leq&
    C_7' \left( |\Lambda|^{-1/2} \sum_{k=1}^{k_0}q^{-kd/(4\gamma)}+|\Lambda|^{-\frac{\alpha\gamma}{2\alpha\gamma+d}}\right)\log\frac6\delta
    \leq
    C_{8}'|\Lambda|^{-\frac{\alpha\gamma}{2\alpha\gamma+d}}\log\frac6\delta,
\end{eqnarray*}
where $C_7',C_{8}'$ are constants independent of $|\Lambda|,\delta$.
Combining the above two cases, i.e., $\hat{k}\geq k_0$ and $\hat{k}<k_0$, we obtain 
$$
   \left\|(  L_{\phi,\Lambda,W_s}+\lambda_{\hat{k}} I)^{1/2} ( f_{D,W_s,\lambda_{\hat{k}}}-f^*)\right\|_\phi\leq  C_{9}'|\Lambda|^{-\frac{\alpha\gamma}{2\alpha\gamma+d}}\log\frac6\delta,
$$
where $C_{9}'=\max\{C_{6}',C_8'\}$. Plugging the above estimate into \eqref{Holder-type-norm} and noting $\tilde{c}=1/3$, we then have 
\begin{eqnarray*}
   \|f^\diamond_{D,W_s,\lambda_{\hat{k}}}-f^*\|_{\psi}
     \leq   \bar{C}_6 |\Lambda|^{- \frac{2(\alpha-\beta)\gamma}{2\alpha\gamma+d}}\log\frac{6}{\delta},
\end{eqnarray*}
where $\bar{C}_6$ is a constant independent of $|\Lambda|,\delta$, This completes the proof of Theorem \ref{Theorem:Lepskii}.
\end{proof}

 \subsection{Proofs of results in Section \ref{sec:Dis}} 
To prove Theorem \ref{Theorem:DS-optimal}, we need the following lemma   provided in \cite{Feng2021radial}.

\begin{lemma}\label{Lemma:Error-decomposition}
Let $\overline{f}_{D,W_{\vec{s}},\vec{\lambda}}$ be defined by (\ref{global estimator}). If $\{\varepsilon_{i,j}\}_{i=1,j=1}^{|\Lambda_j|,J}$ is a set of mean-zero random variables, we have
\begin{eqnarray*} 
        &&  E[\|\overline{f}_{D,W_{\vec{s}},\vec{\lambda}}-f^*\|_{\psi}^2] \nonumber\\
        & \leq&
         \sum_{j=1}^J \frac{|\Lambda_j|^2}{|\Lambda|^2}  E\left[\| f_{D_j,W_{j,s_j},\lambda_j}
         -f^*\|_{\psi}^2\right]
        +\sum_{j=1}^J\frac{|\Lambda_j|}{|\Lambda|}
        \left\|f^\star_{D_j,W_{j,s_j},\lambda_j}-f^*\right\|_{\psi}^2,
\end{eqnarray*}
where $f^{\star}_{D_j,W_{j,s_j},\lambda_j}$ is defined similarly as \eqref{def.noise-free0} by
\begin{equation}\label{population-version-3}
      f^{\star}_{D_j,W_{j,s_j},\lambda_j}=  g_\lambda(L_{\Lambda_j,\lambda_j,W_{s_j}})L_{\phi,\Lambda_j,W_{s_j}}f^*. 
\end{equation}
\end{lemma}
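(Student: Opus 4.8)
The plan is to carry out a bias--variance decomposition of the weighted average \eqref{global estimator} around the noise-free local estimators \eqref{population-version-3}. First I would check that $f^\star_{D_j,W_{j,s_j},\lambda_j}$ is exactly the mean of the local estimator $f_{D_j,W_{j,s_j},\lambda_j}$ defined in \eqref{local estimator}. Writing ${\bf y}_{D_j,W_{s_j}}={\bf f}^*_{D_j,W_{s_j}}+\vec{\varepsilon}_{W_{s_j},j}$ with ${\bf f}^*_{D_j,W_{s_j}}$ the weighted vector of clean values, the estimator is linear in the data, so by the commutation identity \eqref{exchange-adjoint} (which gives $S^T_{\Lambda_j,W_{s_j}}g_{\lambda_j}(\Psi_{\Lambda_j,s_j})=g_{\lambda_j}(L_{\phi,\Lambda_j,W_{s_j}})S^T_{\Lambda_j,W_{s_j}}$) together with $S^T_{\Lambda_j,W_{s_j}}{\bf f}^*_{D_j,W_{s_j}}=L_{\phi,\Lambda_j,W_{s_j}}f^*$, and since $\{\varepsilon_{i,j}\}$ are mean-zero, one obtains $E[f_{D_j,W_{j,s_j},\lambda_j}]=f^\star_{D_j,W_{j,s_j},\lambda_j}$. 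Setting $T_j:=f_{D_j,W_{j,s_j},\lambda_j}-f^\star_{D_j,W_{j,s_j},\lambda_j}$, this means $E[T_j]=0$.

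Next I would split $\overline{f}_{D,W_{\vec s},\vec\lambda}-f^*=V+B$ into the centered fluctuation $V:=\sum_{j=1}^J\frac{|\Lambda_j|}{|\Lambda|}T_j$ and the deterministic bias $B:=\sum_{j=1}^J\frac{|\Lambda_j|}{|\Lambda|}(f^\star_{D_j,W_{j,s_j},\lambda_j}-f^*)$. Because $B$ is deterministic and $E[V]=0$, the cross term in $E[\|B+V\|_\psi^2]$ vanishes, leaving $E[\|\overline{f}_{D,W_{\vec s},\vec\lambda}-f^*\|_\psi^2]=\|B\|_\psi^2+E[\|V\|_\psi^2]$. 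For the bias, the weights $|\Lambda_j|/|\Lambda|$ are nonnegative and sum to one because the $D_j$ partition $D$; hence convexity of $\|\cdot\|_\psi^2$ (Jensen's inequality) gives $\|B\|_\psi^2\le\sum_{j=1}^J\frac{|\Lambda_j|}{|\Lambda|}\|f^\star_{D_j,W_{j,s_j},\lambda_j}-f^*\|_\psi^2$, which is exactly the second term of the claimed bound.

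For the variance I would expand $E[\|V\|_\psi^2]=\sum_{j,j'}\frac{|\Lambda_j||\Lambda_{j'}|}{|\Lambda|^2}E[\langle T_j,T_{j'}\rangle_\psi]$. Since the subsets $D_j$ are disjoint, for $j\ne j'$ the elements $T_j$ and $T_{j'}$ are functions of independent collections of noise variables, so $E[\langle T_j,T_{j'}\rangle_\psi]=\langle E[T_j],E[T_{j'}]\rangle_\psi=0$ and only the diagonal survives, yielding $E[\|V\|_\psi^2]=\sum_{j=1}^J\frac{|\Lambda_j|^2}{|\Lambda|^2}E[\|T_j\|_\psi^2]$. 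Finally, the same deterministic-bias/centered-fluctuation orthogonality applied on a single subset gives $E[\|f_{D_j,W_{j,s_j},\lambda_j}-f^*\|_\psi^2]=\|f^\star_{D_j,W_{j,s_j},\lambda_j}-f^*\|_\psi^2+E[\|T_j\|_\psi^2]\ge E[\|T_j\|_\psi^2]$, so the variance is dominated by $\sum_{j=1}^J\frac{|\Lambda_j|^2}{|\Lambda|^2}E[\|f_{D_j,W_{j,s_j},\lambda_j}-f^*\|_\psi^2]$, the first claimed term; combining the two bounds finishes the proof. The only step needing genuine care is the vanishing of the off-diagonal covariances, which rests on the independence of the noise across the disjoint subsets provided by Assumption \ref{Assumption:data}; all the remaining work is the routine orthogonality between a deterministic bias and a mean-zero fluctuation, so I do not expect any serious obstacle.
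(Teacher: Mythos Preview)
Your proposal is correct and follows the standard bias--variance decomposition that underlies this lemma; the paper itself does not prove the statement but attributes it to \cite{Feng2021radial}, whose argument is precisely the one you outline. Your flag about independence is apt: the lemma as stated only says the $\varepsilon_{i,j}$ are mean-zero, yet the vanishing of the off-diagonal covariances $E[\langle T_j,T_{j'}\rangle_\psi]$ for $j\neq j'$ genuinely requires that the noise on disjoint subsets be independent (or at least uncorrelated), which is implicitly assumed throughout the paper (e.g.\ in the use of Bernstein's inequality in Proposition~\ref{Proposition:value-difference-deterministic}) even though Assumption~\ref{Assumption:data} does not spell it out.
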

  
We then use Theorem \ref{Theorem:operator-representation}, Corollary \ref{Cor:quasi-uniform-integral}, Corollary \ref{Corollary:app-rate-quasi} and Lemma \ref{Lemma:Error-decomposition} to prove Theorem \ref{Theorem:DS-optimal}.

\begin{proof}[Proof of Theorem \ref{Theorem:DS-optimal}]
It follows from Theorem \ref{Theorem:operator-representation} that                        \begin{eqnarray*}
         &&\|f_{D_j,W_{j,s_j},\lambda_j}
         -f^*\|_{\psi}^2
         \leq
        \bar{C}_1\lambda^{-\beta/2} ( \mathcal P_{D_j,\lambda_j,W_{s_j},1}
      +
       \lambda_j^{\frac{\alpha}{2}}+\lambda^{\frac{1}{2}}\mathcal R_{\Lambda_j,\lambda_j,W,0,0}\mathbb I_{\alpha>2}). 
\end{eqnarray*}
Furthermore, \eqref{Apprxo.1234} together with \eqref{Holder-type-norm} yields
 \begin{eqnarray*} 
    &&E[ \|f^\star
    _{D_j,W_{s_j},\lambda_j}-f^*\|_\psi^2 ] 
    \leq
   \bar{C}' 
    (\lambda_j^{ \frac{\alpha-\beta}{2}}+\lambda_j^{  \frac{1-\beta}{2}}\mathcal R_{\Lambda_j,\lambda_j,W_{s_j},0,0}\mathbb I_{\alpha>2}),
\end{eqnarray*}
where $\bar{C}'$ is a constant depending only on $C_2'$.
Plugging the above two estimates into \cref{Lemma:Error-decomposition} and noting $s_j\sim |\Lambda_j|^{1/d}$, $\lambda_j\sim  |\Lambda|^{-\frac{2\gamma}{2\gamma(\alpha-\beta)+d}}$,  $|\Lambda_1|\sim\dots\sim|\Lambda_J|$ and Corollary \ref{Cor:quasi-uniform-integral}, we obtain 
\begin{eqnarray*}
     E[\|\overline{f}_{D,W_{\vec{s}},\vec{\lambda}}-f^*\|_{\psi}^2]  \leq 
       \bar{C}_1'\sum_{j=1}^J \frac{|\Lambda_j|}{|\Lambda|} \left( \frac{1}{|\Lambda|} \lambda_j^{-\frac{d}{2\gamma}} 
        +    
   \lambda_j^{\alpha-\beta} \right) 
    \leq
    \tilde{C}_1|\Lambda|^{-\frac{\gamma(\alpha-\beta)}{2\gamma(\alpha-\beta})+d},
\end{eqnarray*}
where $\bar{C}_1'$ and $\tilde{C}_1$ are constants independent of $\lambda_j,|\Lambda_j|, J$. This completes the proof of Theorem \ref{Theorem:DS-optimal}.
\end{proof}

\bibliographystyle{siamplain}
\bibliography{references}

\end{document}